\theoremstyle{plain}
\newtheorem{de}{\sc Definition}
\newtheorem{theo}[de]{\sc Theorem}
\newtheorem{lem}[de]{\sc Lemma}
\newtheorem{cor}[de]{\sc Corollary}
\newtheorem{prop}[de]{\sc Proposition}
\theoremstyle{definition}
\newtheorem{remark}[de]{\sc Remark}
\newtheorem*{notat}{\sc Notation}
\newtheorem{obse}{\sc Observation}
\def\bexists{\exists\kern-.39em{}\mbox{I}}   % bold face ex'al quantifier
\def\bforall{\forall\kern-.41em{}\raise.31em\hbox{\vbox{\hrule width .25em}}} % bold face universal quantifier    
\def\hyph{\mbox{-}}% hyphen math environment
\newbox\gnBoxA%Gdel number macro
\newdimen\gnCornerHgt
\newdimen\gnArgHgt
\def\gn #1{%
\setbox\gnBoxA=\hbox{$#1$}%
\gnArgHgt=\ht\gnBoxA%
\ifnum     \gnArgHgt<\gnCornerHgt \gnArgHgt=0pt%
\else \advance \gnArgHgt by -\gnCornerHgt%
\fi \raise\gnArgHgt\hbox{$\ulcorner$} \box\gnBoxA %
\raise\gnArgHgt\hbox{$\urcorner$}}
		\newcommand{\lra}{\leftrightarrow}
		\newcommand{\ovl}{\overline}
		\long\def\symbolfootnote[#1]#2{\begingroup%
		\def\thefootnote{\fnsymbol{footnote}}\footnote[#1]{#2}\endgroup}
		\newcommand{\Lra}{\Leftrightarrow}
		\newcommand{\ra}{\rightarrow}
		\newcommand{\Ra}{\Rightarrow}
		\newcommand{\vphi}{\varphi}
		\newcommand{\sth}{\;|\;}
		\newcommand{\nat}{\mathbb N}
		\newcommand{\lan}{\langle}
		\newcommand{\ran}{\rangle}
		\newcommand{\urc}{\urcorner}
		\newcommand{\ulc}{\ulcorner}
		\newcommand{\mc}{\mathcal}
		\newcommand{\mf}{\mathfrak}
		\newcommand{\seq}{\subseteq}
		\newcommand{\corn}[1]{\ulc#1\urc}
\newcommand{\@dotminus}{%
  \ooalign{\hidewidth\raise1ex\hbox{.}\hidewidth\cr$\m@th-$\cr}%
}
		\newcommand{\T}{{\rm T}}
	\newcommand{\lbox}{\mc L_\Box}
	\newcommand{\proplet}[1]{{\rm Prop}(#1)}
	\newcommand{\fcond}{\twoheadrightarrow}
	\newcommand{\lprop}{\mc{L}_0}
	\newcommand{\neither}{{\sf n}}
	\newcommand{\both}{{\sf b}}
	\newcommand{\intfde}[3]{|#1|^{\mc{#2},#3}_{{s}}}
	\newcommand{\intbth}[3]{|#1|^{\mc{#2},#3}_{{\sf b3}}}
	\newcommand{\intfth}[3]{|#1|^{\mc{#2},#3}_{{\sf f3}}}
	\newcommand{\intmix}[4]{|#1|^{\mc{#2},#3}_{{\sf k},{#4}}}
	\newcommand{\sentlpb}{{\rm Sent}_{\lbox}}
	\newcommand{\ssbox}{S_\blacksquare}
	\newcommand{\forlo}[1]{\Vdash_{\sf #1}}
	\newcommand{\ltmod}[1]{\lt^{\mc{#1}}}
	\newcommand{\lnat}{\mc{L}_{\nat}}
	\newcommand{\nablar}{\rotatebox[origin=c]{180}{$\nabla$}}
	\newcommand{\kf}{{\sf KF}}
	\newcommand{\mminus}{{\sf M}^\text{-}}
	\newcommand{\bthr}{{\sf B3}}
	\newcommand{\kthr}{{\sf K3}}
	\newcommand{\fthr}{{\sf F3}}
	\newcommand{\fde}{{\sf FDE}}
	\newcommand{\lp}{{\sf LP}}
	\newcommand{\sthr}{{\sf KS3}}
	\newcommand{\M}{{\sf M}}
	\newcommand{\mcon}{{\sf M}^{\sf n}}
	\newcommand{\mcom}{{\sf M}^{\sf b}}
	\newcommand{\mweak}{{\sf M}^{\sf w}}
	\newcommand{\mfef}{{\sf M}^{\sf f}}
	\newcommand{\bmminus}{{\sf BM}^{-}}
	\newcommand{\lt}{\mc{L}_{\T}}
	\newcommand{\trre}[1]{\mf{I}^{#1}}
	\newcommand{\kfcons}{\sf KF+CN}
	\newcommand{\kfcomp}{\sf KF+CM}
	\newcommand{\wkf}{\sf WKFC}
	\newcommand{\dt}{{\sf DT}}
\def\hyph{\mbox{-}}
\tikzset{
modal/.style={>=stealth?,shorten >=1pt,shorten <=1pt,auto,node distance=1.5cm,
semithick},
world/.style={circle,draw,minimum size=0.5cm,fill=gray!15},
point/.style={circle,draw,inner sep=0.5mm,fill=black},
reflexive above/.style={->,loop,looseness=7,in=120,out=60},
reflexive below/.style={->,loop,looseness=7,in=240,out=300},
reflexive left/.style={->,loop,looseness=7,in=150,out=210},
reflexive right/.style={->,loop,looseness=7,in=30,out=330}
}
		\newcommand{\ax}[1]{\AxiomC{$#1$}}
		\newcommand{\uinf}[1]{\UnaryInfC{$#1$}}
		\newcommand{\binf}[1]{\BinaryInfC{$#1$}}
		\newcommand{\Llb}[1]{\LeftLabel{{\sc (#1)}}}
	\newenvironment{enumeratei}{\begin{enumerate}%
[label={\textup{(\roman*)}}]\setlength{\itemsep}{.6ex}}%
{\end{enumerate}}
\newenvironment{itemize-}{\begin{itemize}%
[label={-}]\setlength{\itemsep}{.6ex}}%
{\end{itemize}}
\begin{document}
\title[Modal Logics of {\sf KF}]{The Modal Logics of \\Kripke-Feferman Truth }
\author{Carlo Nicolai}
\author{Johannes Stern}
\address{King's College London and University of Bristol}
\thanks{Johannes Stern's research was funded by the ERC Starting Grant TRUST 803684}
\email{carlo.nicolai@kcl.ac.uk; johannes.stern@bristol.ac.uk}
\subjclass[2000]{Primary 03Bxx; Secondary 03B45; 03B50}
\date{}
\maketitle

\begin{abstract}
We determine the modal logic of fixed-point models of truth and their axiomatizations by Solomon Feferman via Solovay-style completeness results. Given a fixed-point model $\mc{M}$, or an axiomatization $S$ thereof, we find a modal logic $M$ such that a modal sentence $\vphi$ is a theorem of $M$ if and only if the sentence $\vphi^*$ obtained by translating the modal operator with the truth predicate is true in $\mc{M}$ or a theorem of $S$ under all such translations. To this end, we introduce a novel version of possible worlds semantics featuring both classical and nonclassical worlds and establish the completeness of a family of non-congruent modal logics whose internal logic is subclassical with respect to this semantics. 
\end{abstract}

%Final to do (and discuss) list:
%\begin{itemize}
%    \item Plan of the paper, conclusion or intro
%    \item Feferman logic: is it complete? 
%    \item Can the base theory be weakened? ${\rm I}\Delta_0({\sf exp})$ 
%    \item Add some open problems (Johannes)
%    \item Things that are in the Appendix
%\end{itemize}}

%\tableofcontents

\section{Introduction}
In the aftermath of G\"odel's incompleteness theorems, researchers became interested in general properties of the formalized provability predicate. Bernays distilled three such properties known as Hilbert-Bernays derivability conditions but it was L\"ob \cite{lob55} who, elaborating on Bernays work, proposed the three derivability conditions that are commonly thought to aptly characterize the properties of a ``natural'' provability predicate of recursively enumerable systems extending a sufficiently strong arithmetical theory.\footnote{${\rm I}\Delta_0+{\rm Exp}$ can be considered a safe lower bound. Here ${\rm exp}$ stands for the sentence asserting the totality of the exponential function $x\mapsto 2^x$. See \cite{hapu93} for details.} The striking feature of L\"ob's derivability conditions is that they are essentially principles of propositional modal logic, that is, if the provability predicate is replaced by a modal operator the three derivability conditions can be viewed as axioms of systems of modal logic. It is only natural then to ask which modal system is the exact modal logic of the provability predicate of recursively enumerable systems extending ${\rm I}\Delta_0+{\rm Exp}$ -- a question that was answered by Solovay's \cite{sol76} seminal arithmetical completeness result, which showed that logic to be the system {\sf GL}.

Interestingly, in his paper Solovay observes that by considering alternative modal principles and systems we can study the modal logic of other sentential predicates of interest. It is in this spirit that we investigate the modal logic of the truth predicate of fixed-point models of truth in the sense of \cite{kri75} and their axiomatizations by \cite{fef91,fef08},\footnote{For a general overview of axiomatizations of fixed-point semantics, see \cite[\S15-17]{hal14}.} that is, we propose Solovay-style completeness results for a family of truth theories. Let $\Sigma$ be such a truth theory. Then we determine the modal logic $S$ such that for all $\vphi\in\mathcal{L}_\Box$
\begin{align*}S\vdash\vphi&\Leftrightarrow\text{for all realizations }\ast(\Sigma\vdash \trre{*}(\vphi))\end{align*}
Here $\ast$ a function that maps the propositional variable of the modal language to sentences of the language of the truth theory, while $\mf{I}$ is a translation that commutes with the logical connectives but where the modal operator $\Box$ is translated by the truth predicate -- we will call $\mf{I}$ the truth-interpretation:
\begin{align*}\trre{*}(\Box\vphi)=\T\gn{\trre{*}(\vphi)}.\end{align*}
Some initial research in this direction has already been undertaken by Czarnecki and Zdanowski \cite{cza14} and \cite{sta15} who study theories of truth inspired by revision theoretic approaches \cite{begu93}.\footnote{For related work see also \cite{ste15a} who connects various diagonal modal logics \cite{smo85} to truth theories via the truth interpretation.} Czarnecki and Zdanowski \cite{cza14} and \cite{sta15} prove, albeit in slightly different guise, the modal logic of nearly stable truth, that is, the modal logic of the axiomatic truth theory Friedman-Sheard \cite{frs87,hal94}, to be the modal logic ${\sf KDD_c}$.

As mentioned this paper is concerned with so-called Kripkean fixed-point theories of truth and their axiomatizations. The peculiar feature of these theories is that the truth predicate is in some respect nonclassical: depending on the particular version of the theory under consideration sentences may be neither true nor false, or both true and false. As a consequence, the so-called internal logic of the truth predicate, that is the logic holding within the scope of the truth predicate, will be nonclassical, which in turn forces the modal logic to be non-normal and, indeed, non-congruent.\footnote{Non congruent modal logics are the non-classical modal logics in the sense of \cite{seg71} and \cite{che80}.} This has the further consequence that we cannot appeal to standard modal semantics for investigating the modal systems under consideration but have to introduce a novel version of possible world semantics, in which we allow for both classical and nonclassical worlds. 

%In contrast, theories of truth inspired by revision theoretic approaches \citep[cf.][]{begu93} typically yield a normal modal logic as witnessed by the works of Czarnecki and Zdanowski \citep{cza14} and \cite{sta15}, who, albeit in slightly different guise, prove the modal logic of nearly stable truth, that is, the modal logic of the axiomatic truth theory Friedman-Sheard \citep[cf.][]{frs87,hal94}, to be the modal logic ${\sf KDD_c}$. ${\sf KDD_c}$ and other modal logics can be studied within the usual Kripkean possible world semantics. 

Before we outline the structure of the paper and give some more background to our project it may be useful to stress why the project is an interesting one and, more to the point, why the question we ask is a non-trivial one. For it may seem that in the presence of suitable axiomatic theories of truth determining the corresponding modal logic is a simple matter: we obtain the modal logic by replacing the truth predicate by the modal operator and the individual variable ranging over sentences by propositional variables of the modal operator language in the axioms of the theory. Yet, this picture is too simplistic for two reasons: First, the truth theory will be formulated over a theory of syntax, usually some arithmetical theory such as ${\rm I}\Delta_0+{\rm Exp}$ or extensions thereof, which will equip the truth theory with further expressive strength. For instance, in the truth theory we can prove the diagonal lemma, which is usually not possible within a modal logic. But by using the diagonal lemma we may prove further modal principles, which we cannot prove in the modal logic extracted in the simplistic way from our theory. Indeed, the formalized L\"ob's theorem provides a neat example to this effect: if we have the diagonal lemma, it can be proved on the basis of L\"ob's three derivability conditions but we cannot prove the modal L\"ob principle in the modal logic {\sf K4}. Now, independently of whether a similar phenomenon arises when studying theories of truth, the observation highlights that determining the exact modal logic seems to be a worthwhile project.   

Second, in truth theories one usually stipulates the T-sentences for atomic sentences of the language without the truth predicate. But if we consider the parallel principle \begin{flalign*}&({\sf TB})&&\Box p\leftrightarrow p&&\end{flalign*}
in the context of a modal operator logic we run into trouble: modal logics are closed under the rule of uniform substitution, which means that ({\sf TB}) would hold for all sentences $\vphi$ of the language with the truth predicate. But then, as a consequence of the Liar paradox, no interesting classical truth theory can have a modal logic that assumes $({\sf TB})$.\footnote{It is precisely in this sense that our modal logics diverge from Feferman's \cite{fef84} `type-free modal theory': while Feferman also picks up on the modal character of the various truth principles and formulates them using a modal operator, he does this over a specific theory and adopts ${\sf TB}$, that is, Feferman is to be taken literally when he calls his system a `modal theory' rather than a `modal logic'. His `modal theory' is not closed under uniform substitution and should therefore not be called a logic.} Nonetheless, already the T-sentences for atomic sentences in the language without the truth predicate -- which are required in providing non-trivial axiomatizations of the truth predicate -- have an impact on the modal logic of the theory and the exact impact cannot be immediately read off the axioms of the truth theory. In fact, we will show that such T-sentences force the modal logic of theories of Kripke-Feferman truth to comprise the axiom
\begin{flalign*}
&\tag{\textup{{\sc faith}$^\Box$}}&&\Box\varphi\wedge\neg\Box\neg\varphi\rightarrow\varphi,&&\end{flalign*}
which does not appear in the usual list of axioms of these truth theories. Thus {\sc faith}$^\Box$, for slightly different reasons, plays a similar role as L\"ob's principle in {\sf GL}: it is easily derivable in our theories of truth, but it needs to be added to their modal logics as an additional axiom. In sum, we believe that determining the exact modal logic of a theory of truth is a worthwhile enterprise.

% similar to that of determining the exact logic of provability of, e.g., arithmetical theories.

\subsection{Plan of the paper}

In \S\ref{sec:nocltr} we introduce sub-classical modal logics and their semantics, which extend the main (fully structural) non classical logics considered in the literature on theories of truth. These include the four-valued logic of first-degree entailment and its three-valued paraconsistent and paracomplete extensions. As we will show later in the paper, the sub-classical modal logic are the modal logics of the internal theory or logic of the systems of Kripke-Feferman truth. \S\ref{sec:clmltr} presents the classical modal logics that will be shown to be exactly the modal logics of Kripke-Feferman theories. Since these logics are classical but have a sub-classical internal logics, providing a semantics for them is a nontrivial task. To this end, we introduce special frames in which classical worlds `see' a unique nonclassical world, and prove the completeness of our modal logics with respect to this semantics.  
In \S\ref{sec:trupre}, we move on to introduce Kripkean fixed point semantics and the systems of Kripke-Feferman truth. \S\ref{secttc} contains the main results of the paper. We first establish the modal logic of the basic system ${\sf KF}$ and its extensions with completeness of consistency axioms for the truth predicate. Then we consider -- by providing a different realization -- a stronger form of truth theoretic completeness that holds for $\kfcons$, $\kfcomp$, $\wkf$, $\dt$, and all of their consistent extensions. We conclude the paper by pointing to some further research.

%%%%%%%NONCLASSICAL%%%%%%%%%

\section{Sub-classical modal logics for truth }\label{sec:nocltr}
Our basic language is a standard modal operator language $\lbox$, which is built over a countable set {\sf Prop} of propositional variables $p_0,p_1,p_2,\ldots$:
\begin{align*}\vphi&:\hskip-.1em:=p_j\,|\,\bot\,|\,\top\,|\,\neg\vphi\,|\,\Box\vphi\,|\,\vphi\wedge\vphi\,|\,\vphi\vee\vphi\end{align*}
with $p_j\in{\sf Prop}$ for $j\in\omega$. The language $\lprop$ is the language obtained  by removing $\Box$ from $\lbox$. $\lbox^\fcond$ is obtained by adding to $\lbox$ a binary connective $\fcond$. In the context of our nonclassical modal logics, $\Diamond$ can be defined as usual as $\neg\Box\neg$, but this \emph{will not} be the case in the classical modal logics investigated in the following sections. 

We will work with several sub-classical logics that support na\"ive truth and that amount to the internal logics of the classical systems of truth we are ultimately interested in.  We formulate our systems in a sequent calculus, where $\Gamma,\Delta,\Theta,\Lambda\ldots$ are \emph{finite sets} of formulas of $\lprop$. We denote with ${\rm Prop}(\vphi)$ the set of propositional letters of $\vphi$ (resp.~${\rm Prop}(\Gamma)$ for the propositional formulas in the set of formulas $\Gamma$). For $X$ a set of sentences of $\lbox$, we let $\Box X:=\{\Box\vphi \sth \vphi\in X\}$.

We work with a sequent calculus formulations for first-degree entailment ($\fde$), symmetric Kleene logic ($\sthr$), Strong Kleene (${\sf K3}$), the logic of paradox ($\lp$), Weak Kleene $(\bthr)$, and Feferman-Aczel logic ($\fthr$), which is a slight modification of $\bthr$ with a primitive conditional \cite{anbe75,kle52,cos72,coco13,acz80,fef08}. Details of the systems are given in Appendix A.

We now introduce some modal extensions of our systems.\footnote{Modal logics based on $\kthr$ are studied in \cite{jath96}. Their framework forms the basis of the logical systems of nonclassical modal logic considered in this paper. An $\fde$-based version of ${\sf K}$ is extensively studied in \cite{odwa2010} and labelled ${\sf BK}$. Also, consistent and complete extensions of ${\sf BK}$ have been investigated by \cite{odsp2020} and \cite{odsp2016}. }
\begin{de}\label{def:nailtr}
		For $S\in \{{\sf K3}, {\sf B3},\lp,\sthr,\fde,\fthr\}$, the systems $S_{\Box}$ in the language $\lbox$ ($\lbox^\fcond$) are defined by adding to $S$ the rules:
		\begin{align*}
			& \ax{\vphi,\Gamma\Ra\Delta}\Llb{$\Box$l}
					\uinf{\Box\vphi,\Gamma\Ra\Delta}
						\DisplayProof
				&& \ax{\Gamma\Ra\Delta,\vphi}\Llb{$\Box$r}
					\uinf{\Gamma\Ra\Delta,\Box\vphi}
						\DisplayProof\\[10pt]
			& \ax{\neg\vphi,\Gamma\Ra\Delta}\Llb{$\neg\Box$l}
				\uinf{\neg\Box\vphi,\Gamma\Ra\Delta}
					\DisplayProof
				&& \ax{\Gamma\Ra\Delta,\neg\vphi}\Llb{$\neg\Box$r}
					\uinf{\Gamma\Ra\Delta,\neg\Box\vphi}
						\DisplayProof\\
		\end{align*}
	\end{de}
	
 The adequacy of the logics $S_\Box$ with respect to the possible worlds semantics introduced below follows from a more general result concerning a subclassical modal logic, which can be thought of as the modal analogue of the classical propositional logic {\sf K}. 
    
    	\begin{de}%C: Here I take the rules from Jaspar&co
		For $S\in \{{\sf K3}, {\sf B3},\lp,\sthr,\fde,\fthr\}$, the systems $S_{\blacksquare}$ in $\lbox$ ($\lbox^\fcond$) are obtained by replacing ({\sc ref}) with the initial sequent $\Gamma,\vphi\Ra\vphi,\Delta$ for all $\vphi\in \lbox$, and by adding to $S$ the rules:
		\begin{align*}
			& \ax{\Gamma,\neg\vphi\Ra \neg\Delta}\Llb{$\blacksquare$l}
				\uinf{\Box \Gamma ,\neg\Box\vphi\Ra \neg\Box\Delta}
					\DisplayProof
				&&\ax{\Gamma\Ra\vphi,\neg\Delta}\Llb{$\blacksquare$r}
				\uinf{\Box \Gamma\Ra\Box\vphi, \neg\Box\Delta}
					\DisplayProof
		\end{align*}
	\end{de}

%%%%%%%%%%%%%%%%%%%%%%%%%%%%%%%%%%%%%%%%%%%%%PROOF-THEORETIC SIMPLE PROPERTIES%%%%%%%%%%%%%%%%%%%%%%%%%%%%%%%%%%%%%%%%%%%%%

We apply the following notational conventions for $T$ one of our modal systems:

\begin{itemize-}
	\item Derivations in $T$ are at most binary branching finite trees labelled with sequents. Leaves are axioms -- the relevant instance of reflexivity, $(\bot)$, $(\top)$, and the remaining nodes are obtained by applications of the rules of inference of $T$.  For $T$ one of the logics above, $T\vdash \Gamma \Ra \Delta$ stands for the existence of a derivation whose root is labelled by $\Gamma\Ra \Delta$. 
	\item We can extend the above notion of derivability to {\it arbitrary} sets of formulas:  for $\Gamma,\Delta$ arbitrary sets of formulas, we write $T\vdash \Gamma \Ra\Delta$ iff there are finite $\Gamma_0\seq \Gamma$ and $\Delta_0\seq \Delta$ such that $T\vdash \Gamma_0 \Ra\Delta_0$.
	\item The \emph{length} of a derivation can be defined as the number of nodes in the maximal branch of the derivation tree minus one. We write $T\vdash^n \Gamma \Ra \Delta$ if the length of the derivation of $\Gamma\Ra \Delta$ in $T$ is $\leq n$. 
\end{itemize-}

By our definition of sequent, contraction is trivially admissible in our logics. In addition, by  straightforward induction on the length of the derivations in the appropriate systems, we have:
\begin{lem}[Reflexivity, Weakening] \hfill
    \begin{enumeratei}
        \item For $S\in \{{\sf K3},\lp,\sthr,\fde,\bthr\}$ as in the previous definition, 
        	\begin{enumerate}
	        	\item For all $\vphi\in \lbox$, $S_\Box \vdash \Gamma,\vphi\Ra\vphi,\Delta$
	        	\item If $S_\Box\vdash^n \Gamma\Ra\Delta$, then $S_\Box\vdash^n\Gamma,\Gamma_0\Ra\Delta,\Delta_0$ for  $\Gamma_0,\Delta_0$ finite. 
	        	\item if $S_\blacksquare\vdash^n \Gamma\Ra\Delta$, then $S_\blacksquare\vdash^n\Gamma,\Gamma_0\Ra\Delta,\Delta_0$ for $\Gamma_0,\Delta_0$ finite. 
	         \end{enumerate}
	       \item For $\vphi \in \lbox^\fcond$, and $\Gamma,\Delta\subset \lbox^\fcond$:
	        \begin{enumerate}
	            \item  $\fthr_\Box \vdash \Gamma ,\vphi\Ra \vphi,\Delta$ 
	            \item If $\fthr_\Box\vdash^n \Gamma\Ra\Delta$, then $\fthr_\Box\vdash^n\Gamma,\Gamma_0\Ra\Delta,\Delta_0$.
	            \item  if $\fthr_\blacksquare\vdash^n \Gamma\Ra\Delta$, then $\fthr_\blacksquare\vdash^n\Gamma,\Gamma_0\Ra\Delta,\Delta_0$.
	       \end{enumerate}
	       %\item For $S\in \{{\sf K3},\lp,\sthr,\fde\}$ as in the previous definition, if $S_\blacksquare\vdash^n \Gamma\Ra\Delta$, then $S_\blacksquare\vdash^n\Gamma,\Gamma_0\Ra\Delta,\Delta_0$ for $\Gamma_0,\Delta_0$ finite. 
	\end{enumeratei}
\end{lem}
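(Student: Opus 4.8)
The plan is to prove the two halves of the lemma by the inductions already announced: the reflexivity clauses (i)(a) and (ii)(a) by induction on the complexity of $\vphi$, and each weakening clause by induction on the length $n$ of the derivation. These two inductions are independent, so I would carry them out separately.

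For the reflexivity clauses I would induct on the number of connectives of $\vphi$. If $\vphi$ is a propositional variable, $\bot$, or $\top$, then $\Gamma,\vphi\Ra\vphi,\Delta$ is already an initial sequent (the relevant instance of ({\sc ref}), $(\bot)$, or $(\top)$). For $\vphi=\psi\wedge\chi$, $\psi\vee\chi$, and --- in the case of $\fthr_\Box$ --- $\psi\fcond\chi$, I would apply the matching left and right rules to the two instances $\Gamma,\psi\Ra\psi,\Delta$ and $\Gamma,\chi\Ra\chi,\Delta$ of the inductive hypothesis. The only genuinely new cases relative to the propositional base systems $S$ are the modal ones: from the inductive hypothesis $\Gamma,\psi\Ra\psi,\Delta$, successive applications of ($\Box$l) and ($\Box$r) give $\Gamma,\Box\psi\Ra\Box\psi,\Delta$, and from $\Gamma,\neg\psi\Ra\neg\psi,\Delta$ applications of ($\neg\Box$l) and ($\neg\Box$r) give $\Gamma,\neg\Box\psi\Ra\neg\Box\psi,\Delta$. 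The negation case $\vphi=\neg\psi$ is handled by a sub-analysis on the main connective of $\psi$, using the double-negation, De Morgan, $\neg\Box$, and (for $\fthr_\Box$) $\neg\fcond$ rules; the one point to verify is that the premises of each such rule (e.g. $\neg\rho,\neg\sigma$ for $\neg(\rho\wedge\sigma)$, and $\neg\rho$ for $\neg\Box\rho$) contain strictly fewer connectives than $\vphi$, so that connective-count is an adequate measure and the inductive hypothesis applies.

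For the weakening clauses (i)(b), (i)(c), (ii)(b), (ii)(c) I would induct on $n$. When $n=0$ the sequent $\Gamma\Ra\Delta$ is an initial sequent; since the initial sequents --- including the reflexivity schema $\Gamma,\vphi\Ra\vphi,\Delta$ posited for the $\blacksquare$-systems, as well as $(\bot)$ and $(\top)$ --- carry arbitrary parametric contexts, adjoining finite $\Gamma_0$ on the left and $\Delta_0$ on the right again yields an initial sequent, of length $0$. For the inductive step I would inspect the last rule $R$, apply the inductive hypothesis to its premise(s) so as to insert $\Gamma_0,\Delta_0$ into the parametric part of the context, and then reapply $R$; because $R$ acts on a fixed active formula while leaving its side context parametric, this reproduces the conclusion weakened by $\Gamma_0,\Delta_0$ with the same length $n$, keeping the argument height-preserving.

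The step needing the most care --- and the one that separates clauses (b) from clauses (c) --- is the treatment of the modal rules in the inductive step of weakening. For $S_\Box$ the rules ($\Box$l), ($\Box$r), ($\neg\Box$l), ($\neg\Box$r) keep $\Gamma,\Delta$ entirely parametric, so the reapplication is immediate and clauses (b) are routine. The delicate case is ($\blacksquare$l), ($\blacksquare$r) in clauses (c): these rules reshape their context by prefixing $\Box$ (and $\neg$), so I must make sure that the adjoined $\Gamma_0,\Delta_0$ occupy the genuinely parametric position of the rule rather than its transformed (boxed) context, so that the reapplication is a legitimate instance of the rule and the height is preserved. Once the bookkeeping of active versus parametric formulas in the $\blacksquare$-rules is made explicit this case goes through as well, and the conditional rules for $\fcond$ and $\neg\fcond$ in part (ii) raise no new difficulty beyond treating $\fcond$ as one further binary connective.
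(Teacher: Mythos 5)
Your overall strategy (reflexivity by induction on the complexity of $\vphi$, weakening by induction on the length of the derivation) is the standard one and is exactly what the paper intends --- the paper offers no proof beyond the remark that the lemma follows ``by straightforward induction on the length of the derivations''. Two of the details you defer are, however, precisely where the argument needs repair rather than mere bookkeeping. First, in the reflexivity induction you cannot always ``apply the matching left and right rules to $\Gamma,\psi\Ra\psi,\Delta$ and $\Gamma,\chi\Ra\chi,\Delta$'': the right rules of $\bthr$ and $\fthr$ carry the provenance side condition $\proplet{\cdot}\subseteq\proplet{\Gamma}$, and, for instance, passing from $\psi\Ra\psi$ to $\psi\Ra\psi\vee\chi$ by ($\vee$r) violates it. The induction still goes through, but only because the inductive hypothesis holds for arbitrary contexts: one must instantiate it as $\Gamma,\psi\vee\chi,\psi\Ra\psi,\Delta$, keeping the principal formula in the antecedent so that the side condition is met, apply ($\vee$r), and only then ($\vee$l); the same care is needed for ($\neg$r) and ($\fcond$r2). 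Incidentally, $\fthr$ has no primitive $\neg\fcond$ rules --- negated formulas in $\bthr$ and $\fthr$ are all handled through the general ($\neg$l)/($\neg$r) rules, again minding the side condition on ($\neg$r).

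Second, and more seriously, your treatment of ($\blacksquare$l)/($\blacksquare$r) in the clauses (c) does not work as described: these rules, as displayed in the paper, have \emph{no} parametric position --- the entire antecedent and succedent of the conclusion are of the forms $\Box\Gamma$ and $\Box\vphi,\neg\Box\Delta$ --- so there is no slot into which the unboxed $\Gamma_0,\Delta_0$ can be inserted before ``reapplying $R$''. Read literally, a sequent such as $\Box p,\Box q,r\Ra\Box(p\wedge q)$, which is a weakening of the ($\blacksquare$r)-derivable $\Box p,\Box q\Ra\Box(p\wedge q)$, would admit no derivation at all, since no rule of $S_\blacksquare$ can introduce the unboxed side formula $r$ below a $\blacksquare$-inference. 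The reading needed both here and for the canonical-model argument in Appendix B is that the $\blacksquare$-rules carry arbitrary side contexts in their conclusions, i.e.\ they conclude $\Sigma,\Box\Gamma\Ra\Box\vphi,\neg\Box\Delta,\Pi$; once that convention is made explicit your inductive step is immediate (absorb $\Gamma_0,\Delta_0$ into $\Sigma,\Pi$), but as written you are appealing to a parametric position the rules do not have.
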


%%%SEMANTICS NONCLASSICAL%%%%%%%

\subsection{Semantics} Next we introduce a possible worlds semantics for the propositional systems just defined. The main difference with standard possible models lies in the use of sub-classical valuation functions, which then give rise to nonclassical semantic clauses of the connectives and consequence relations \cite{jath96,pri08}. 
%semantic notions
\begin{de}
\hfill
	\begin{enumeratei}
		\item A \emph{frame} is a pair $(Z,R)$ where $Z$ is a nonempty set and $R$ is a binary relation on $Z$.  
		\item A \emph{four-valued} valuation for $\lbox$ is a function $V\colon{\rm Prop}\times Z\to\{0,1,\neither,\both\}$, where $\{0,1,\neither,\both\}$ is the set of truth values:
		\begin{itemize}
			\item a \emph{consistent} valuation is a function $V\colon{\rm Prop}\times Z\to\{0,1,\neither\}$;
			\item a \emph{complete} valuation is a function $V\colon{\rm Prop}\times Z\to\{0,1,\both\}$;
			\item a \emph{symmetric} valuation  assigns, at every $z\in Z$, values in exactly one of $\{1,0,\neither\}$ or $\{1,0,\both\}$. %In other words there are no $p,q\in {\rm Prop}$ and $z\in Z$ such that $V_z(p)=\neither$ and $V_z(q)=\both$.  
		\end{itemize}
		\item A \emph{model} $\mc{M}$ is a triple $(Z,R,V)$, with $(Z,R)$ a frame and $V$ a valuation. A model so-defined is \emph{based on} $(Z,R)$. 
			\begin{itemize}
				\item A model $\mc{M}=(Z,R,V)$ is \emph{consistent} if $V$ is consistent. 
				\item A model $\mc{M}=(Z,R,V)$ is \emph{complete} if $V$ is complete.

			\end{itemize}
	\end{enumeratei}
\end{de}

Let $\preceq$ be the ordering of the truth values $\{0,\neither,\both,1\}$ displayed in the lattice

\begin{center}
	\begin{tikzpicture}
		\node[] (n1) at (0,-1) {$0$};
		\node[] (n2) at (1,0) {$\both$};
		\node[] (n3) at (-1,0) {$\neither$};
		\node[] (n4) at (0,1) {$1$};
		\draw[->] (n1) -- (n2);
		\draw[->] (n2) -- (n4);
		\draw[->] (n1) -- (n3);
		\draw[->] (n3) -- (n4);
	\end{tikzpicture}
	\end{center}
Moreover, let $\curlyeqprec$ be the ordering $\neither\curlyeqprec 0 \curlyeqprec 1$.

\begin{de}[Truth]\label{def:nctm} \hfill
	\begin{enumeratei}
		\item Given a model $\mc{M}=(Z,R,V)$, and ${s}\in \{{\sf fde},{\sf k3}, {\sf lp}, {\sf ks3} \}$, an ${s}$-\emph{interpretation}  extends the valuation $V$ by assigning to each sentence of $\lbox$ a truth value:
			\begin{align*}
				& \intfde{p}{M}{z}=V_z(p)
					&&\intfde{\neg\vphi}{M}{z}=
								\begin{cases}
									0&\text{if $\intfde{\vphi}{M}{z}=1$}\\
									1&\text{if $\intfde{\vphi}{M}{z}=0$}\\
									\intfde{\vphi}{M}{z}&\text{otherwise.}
								\end{cases}\\
				&\intfde{\vphi\land\psi}{M}{z}={\rm inf}_{\preceq}\{\intfde{\vphi}{M}{z},\intfde{\psi}{M}{z}\}
					&&\intfde{\vphi\vee\psi}{M}{z}={\rm sup}_{\preceq}\{\intfde{\vphi}{M}{z},\intfde{\psi}{M}{z}\}
			\end{align*}
			\[
			\intfde{\Box\vphi}{M}{z}={\rm inf}_{\preceq}\{ \intfde{\vphi}{M}{z_0}\sth Rzz_0\}
%								\begin{cases}
%									1&\text{if $\forall z_0\in[wR], \intfde{\vphi}{M}{z_0}=1$}\\
%									0&\text{if $\exists z_0\in[wR], \intfde{\vphi}{M}{z_0}=0$}\\
%									\both&\text{if $\exists z_0\in[wR], \intfde{\vphi}{M}{z_0}=\both$}\\
%									\neither&\text{otherwise}.
%								\end{cases}
			\]
			If $V$ is four-valued, then ${s}={\sf fde}$; if $V$ is consistent, then ${s}={\sf k3}$; if $V$ is complete, then ${s}={\sf lp}$; finally, if $V$ is symmetric, then ${s}={\sf ks3}$. 
			
% 		\item Given a consistent model $\mc{M}=(Z,R,V)$, a \emph{$\kthr$-interpretation} is given by:
% 			\begin{align*}
% 				& \intkth{p}{M}{z}=V_z(p)
% 					&&\intkth{\neg\vphi}{M}{z}=
% 								\begin{cases}
% 									0&\text{if $\intkth{\vphi}{M}{z}=1$}\\
% 									1&\text{if $\intkth{\vphi}{M}{z}=0$}\\
% 									\intkth{\vphi}{M}{z}&\text{otherwise.}
% 								\end{cases}\\
% 				&\intkth{\vphi\land \psi}{M}{z}={\rm min}_{\preceq}(\intkth{\vphi}{M}{z},\intkth{\psi}{M}{z})
% 					&&\intkth{\vphi\vee\psi}{M}{z}{z}={\rm max}_{\preceq}(\intkth{\vphi}{M}{z},\intkth{\psi}{M}{z})
% 			\end{align*}
% 			\[
% 			\intkth{\Box\vphi}{M}{z}={\rm inf}_{\preceq}\{ \intkth{\vphi}{M}{z_0}\sth zRz_0\}
% %								\begin{cases}
% %									1&\text{if $\forall z_0\in[wR], \intkth{\vphi}{M}{z_0}=1$}\\
% %									0&\text{if $\exists z_0\in[wR], \intkth{\vphi}{M}{z_0}=0$}\\
% %									\neither&\text{otherwise}.
% 								%\end{cases}
% 			\]
% 		\item Given a complete model $\mc{M}=(Z,R,V)$, an \emph{$\lp$-interpretation} $\intlp{\vphi}{M}{z}$ for connectives and atoms -- modulo replacing $\neither$ with $\both$. In the clause of boxed formulas, we have:
% 			\[
% 			\intlp{\Box\vphi}{M}{z}={\rm inf}_{\preceq}\{ \intlp{\vphi}{M}{z_0}\sth zRz_0\}
% %								\begin{cases}
% %									1&\text{if $\forall z_0\in[wR], \intlp{\vphi}{M}{z_0}=1$}\\
% %									0&\text{if $\exists z_0\in[wR], \intlp{\vphi}{M}{z_0}=0$}\\
% %									\both&\text{otherwise}.
% %								\end{cases}
% 			\]
% 		%C: This does not work in the general case. 
		\item Given a consistent $\mc{M}=(Z,R,V)$, a \emph{$\bthr$-interpretation} is given by:
		\begin{align*}
				& \intbth{p}{M}{z}=V_z(p)
					&&\intbth{\neg\vphi}{M}{z}=
								\begin{cases}
									0&\text{if $\intbth{\vphi}{M}{z}=1$}\\
									1&\text{if $\intbth{\vphi}{M}{z}=0$}\\
									\intbth{\vphi}{M}{z}&\text{otherwise.}
								\end{cases}\\
				&\intbth{\vphi\land\psi}{M}{z}={\rm min}_{\curlyeqprec}(\intbth{\vphi}{M}{z},\intbth{\psi}{M}{z})
					&&\intbth{\vphi\vee\psi}{M}{z}=\intbth{\neg(\neg\vphi\land\neg\psi)}{M}{z}
		\end{align*}
		\[
			\intbth{\Box\vphi}{M}{z}={\rm inf}_{\curlyeqprec}\{ \intbth{\vphi}{M}{z_0}\sth Rzz_0\}
%								\begin{cases}
%									1&\text{if $\forall z_0\in[wR], \intbth{\vphi}{M}{z_0}=1$}\\
%									0&\text{if $\exists z_0\in[wR], \intbth{\vphi}{M}{z_0}=0$}\\
%									\neither&\text{otherwise}.
%								\end{cases}
			\]
			Notice the use of the ordering $\curlyeqprec$ in this clause. 
		%\item Given a \emph{symmetric} model $\mc{M}=(Z,R,V)$, an $\sthr$-interpretation combines the interpretations $\intkth{\phantom{q}}{M}{z}$ and $\intlp{\phantom{q}}{M}{z}$: at any $z\in Z$, sentences are evaluated either according to the one or the other evaluation schema. 
		\item Again given a consistent $\mc{M}=(Z,R,V)$, an \emph{$\fthr$-interpretation} extends a $\bthr$-interpre\-ta\-tion with the clause:
		\[
			\intfth{\vphi\fcond\psi}{M}{z}=
						\begin{cases}
							 1,&\text{if $\intfth{\vphi}{M}{z}=0$ or ${\rm min}_{\curlyeqprec}(\intfth{\vphi}{M}{z},\intfth{\psi}{M}{z})=1$}\\
							 0,&\text{if $\intfth{\vphi}{M}{z}=1$ and $\intfth{\psi}{M}{z}=0$}\\
							 {\sf n},& \text{otherwise}
						\end{cases}
		\]
				
	\end{enumeratei}
\end{de}

	\begin{notat}\hfill
			\begin{itemize}
			    \item[-] Given $\mc{M}=(Z,R,V)$ and ${s}\in \{{\sf fde},{\sf lp},{\sf k3},{\sf ks3}, {\sf b3}, {\sf f3}\}$, we write $\mc{M},{z}\Vdash_{s}{\vphi}$ whenever $\intfde{\vphi}{M}{z}\in \{1,{\sf b}\}$.
				% \item[-] Given $\mc{M}=(Z,R,V)$, we write $\satfde{M}{z}{\vphi}$ if $\intfde{\vphi}{M}{z}\in \{1,b\}$.
				% \item[-] With $\mc{M}=(Z,R,V)$ a model and $V$ consistent, we write $\satkth{M}{z}{\vphi}$ if $\intkth{\vphi}{M}{z}=1$, and $\satbth{M}{z}{\vphi}$ if $\intbth{\vphi}{M}{z}=1$.
				% \item[-] With $\mc{M}=(Z,R,V)$ a model and $V$ complete, we write $\satlp{M}{z}{\vphi}$ if $\intlp{\vphi}{M}{z}\in \{1,b\}$.	
				% \item With $\mc{M}=(Z,R,V)$ a model and $V$ symmetric, we write ${\mc M}{z}\Vdash_{\sf ks3}{\vphi}$ if $|\vphi|_{\sf ks3} \in \{1,{\sf b}\}$. 
			\end{itemize}
	\end{notat}

	In what follows, we will  focus on the so-called \emph{local logical consequence relation} in our semantics \cite{brv02}.%\carlo{Do we want to say something about the global one? Is there anything interesting to say? E.g. the  usual correspondence with the local $\{\Box^n\gamma \}\vDash \delta$ should hold.}.

\begin{de}[Consequence]\label{def:connoc}
    Let $F=(Z,R)$ be an arbitrary frame, and $\mf{F}$ a class of models based on $F$.  For $\Gamma,\Delta$ sets of sentences of $\lbox$ and ${s}\in$ $\{${\sf fde},{\sf lp},{\sf k3},{\sf ks3}, {\sf b3}, {\sf f3}$\}$, we have  
     \begin{align*}
        \Gamma \vDash^\mf{F}_{s} \Delta &\text{ iff, for all $\mc{M}\in \mf{F}$ and $z\in Z$: if $\forall \gamma \in \Gamma\;\mc{M},{z}\Vdash_{s}{\gamma}$ for all $\gamma \in \Gamma$,}\\
                &\text{ then $\mc{M},{z}\Vdash_{s}{\delta}$ {for some} $\delta\in \Delta$.}
     \end{align*}
\end{de}

	\begin{remark}
		The notion of consequence for {\sf FDE} and {\sf KS3} can be formulated with an extra clause for the anti-preservation of falsity. This is not the case for the stronger logics. 
	\end{remark}

The logics $S_\blacksquare$ are adequate with respect to the Kripke semantics introduced above. To prove this we generalize to different evaluation schemata the main strategy applied by \cite{jath96} to modal logics extending ${\sf K3}_{\blacksquare}$. In particular, the notion of maximally consistent set is replaced with the one of \emph{saturated set}. A saturated set is, roughly, a non-trivial set of formulas closed under the particular logic whose completeness is at stake.\footnote{A set $\Gamma$ of sentences is \emph{trivial} in $T$ iff $T\vdash \Gamma\Ra \varnothing$. } The detailed proof of the next claim is provided in Appendix B. 

\begin{prop}[Adequacy]\label{prop:nonclade}
	    Let $F=(Z,R)$ be an arbitrary frame. Then for any $\Gamma\subseteq \lbox$ and $\vphi \in \lbox$:		
						\[
							\Gamma\vDash_s^{\mf{F}} \Delta \;\;\text{iff}\;\;S_\blacksquare\vdash \Gamma \Ra \Delta
						\]
					holds when 
						\begin{itemize-}
						    \item $\mf{F}$ is the class of four-valued models based on $F$ and $S$ is ${\sf FDE}$
						    \item $\mf{F}$ is the class of consistent models based on $F$ and $S$ is ${\sf K3}$, ${\sf B3}$, or ${\sf F3}$.
						    \item  $\mf{F}$ is the class of complete models based on $F$ and $S$ is ${\sf LP}$
						    \item $\mf{F}$ is the class of symmetric models based on $F$ and $S$ is ${\sf KS3}$
						    \item 
						\end{itemize-} 
			Notice that, by changing the parameter $S$ in the claims above, we are simultaneously changing both the logic on the right-hand side of the equivalence, \emph{and} the evaluation scheme on the left hand side. 
	\end{prop}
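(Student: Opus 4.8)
The plan is to prove the two directions of the biconditional separately, and to handle all six logics $S$ uniformly, parametrising over the evaluation scheme $s$ associated with $S$ as specified in the statement. For soundness, I would argue by induction on the length of a derivation in $S_\blacksquare$ that if $S_\blacksquare\vdash\Gamma\Ra\Delta$ then $\Gamma\vDash_s^{\mf{F}}\Delta$ over the appropriate class $\mf{F}$. The base cases are the initial sequents $\Gamma,\vphi\Ra\vphi,\Delta$, $(\bot)$ and $(\top)$, which hold because no valuation can assign $\vphi$ simultaneously a designated value on the left and a non-designated value on the right at a fixed world $z$. For the inductive step one checks that each rule preserves local consequence at a fixed $(\mc M,z)$; the propositional rules are routine given the clauses in Definition~\ref{def:nctm}, and the only genuinely modal rules are $(\blacksquare\mathrm{l})$ and $(\blacksquare\mathrm{r})$. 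Here one must verify that the premise's consequence at \emph{all} $R$-successors of $z$ forces the boxed conclusion at $z$, using the fact that $\intfde{\Box\vphi}{M}{z}$ is the $\preceq$- (resp.\ $\curlyeqprec$-) infimum over successors; the negated-box conclusions are handled by the anti-preservation behaviour of negation recorded in the truth clauses.

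\textbf{Completeness direction.}
The substantive direction is completeness, and here the plan is the canonical-model construction flagged in the text: replace maximally consistent sets by \emph{saturated sets} and build a canonical frame whose worlds are saturated sets. Concretely, assuming $S_\blacksquare\nvdash\Gamma\Ra\Delta$ I would extend $\Gamma\Ra\Delta$ to a saturated pair $(\Gamma^\star,\Delta^\star)$ via a Lindenbaum-style argument adapted to the sequent setting (at each stage deciding a formula so as to keep the pair non-trivial in $S_\blacksquare$), take these saturated sets as the points of the canonical frame, and define the canonical accessibility relation $R$ in the standard modal way so that $R$-successors witness failures of boxed formulas. The canonical valuation is read off each saturated set, assigning the four-valued (or consistent/complete/symmetric, according to $S$) value dictated by which of $\vphi$ and $\neg\vphi$ belong to the set. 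The heart of the argument is the \emph{Truth Lemma}: for every $\vphi\in\lbox$ and every saturated $\Sigma$, $\mc M,\Sigma\Vdash_s\vphi$ iff $\vphi\in\Sigma$, proved by induction on $\vphi$. The propositional cases reduce to saturation closure conditions, and the modal case $\Box\vphi$ uses the definition of $R$ together with the $(\blacksquare)$-rules to match the infimum clause.

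\textbf{Uniformity and the main obstacle.}
Since changing $S$ changes \emph{both} the calculus and the class $\mf{F}$ simultaneously, I would isolate the construction so that the only place the choice of $S$ enters is (i) which initial sequents and rules saturation must respect, and (ii) which of the four truth values the canonical valuation is permitted to take, thereby automatically landing the canonical model in the correct class $\mf{F}$ (four-valued, consistent, complete, or symmetric). I expect the main obstacle to be twofold. First, because these logics are non-congruent and the negated-box clauses $(\neg\Box)$ must be tracked separately from the positive clauses, the canonical accessibility relation has to be defined so that it simultaneously witnesses both the $\Box$-infimum and the behaviour of $\neg\Box$; getting a single relation to serve both, and proving the corresponding back-and-forth conditions in the modal case of the Truth Lemma, is the delicate point. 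Second, the Lindenbaum/saturation step must be carried out so that non-triviality in $S_\blacksquare$ is preserved while still deciding enough formulas to verify all the truth clauses at once across the different evaluation schemes; this is where the distinction between $\preceq$ and $\curlyeqprec$, and the symmetric-valuation constraint for $\sthr$, must be accommodated carefully. The cut-admissibility (or the built-in closure properties) of the relevant systems should make the saturation step go through, and I would lean on the Reflexivity/Weakening lemma already established to streamline both the base cases and the extension argument.
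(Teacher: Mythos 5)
Your overall architecture is exactly the paper's: soundness by induction on derivation length, and completeness via a Lindenbaum-style extension to \emph{saturated} sets (not maximally consistent ones), a canonical valuation read off from membership of $p$ and $\neg p$, and a Truth Lemma by induction on positive complexity. The verification that saturated sets land in the right model class (consistent for ${\sf K3}$/${\sf B3}$/${\sf F3}$ via $\neg$l, complete for ${\sf LP}$ via $\neg$r, symmetric for ${\sf KS3}$ via ({\sc sym})) is routine, as you suggest. One small correction: the paper saturates only $\Gamma$ to a set $\Gamma^*$ disjoint from $\Delta$; there is no ``saturated pair'', and nothing dual needs to be done on the $\Delta$ side.

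There is, however, one concrete idea missing, and it is precisely the point on which this proposition differs from the classical case. You propose to ``define the canonical accessibility relation $R$ in the standard modal way so that $R$-successors witness failures of boxed formulas'', i.e.\ $R z_0 z_1$ iff $\{\vphi\sth\Box\vphi\in z_0\}\subseteq z_1$, and then flag as an unresolved ``delicate point'' that a single relation must also control $\neg\Box$. But the standard definition demonstrably fails here: since saturated sets are not maximal, a successor containing $\{\vphi\sth\Box\vphi\in z_0\}$ may still contain some $\neg\psi$ with $\neg\Box\psi\notin z_0$, breaking the right-to-left direction of the Truth Lemma for $\neg\Box\psi$ (and the falsity component of the infimum clause). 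The paper's resolution is to build the constraint into $R$ from the start by a two-sided condition,
\[
R^S z_0 z_1 \;:\Lra\; \{\vphi\sth \Box\vphi\in z_0\}\subseteq z_1\subseteq\{\vphi\sth \Diamond\vphi\in z_0\},
\]
with $\Diamond:=\neg\Box\neg$, and to prove a two-part Existence Lemma: any saturated set containing $\{\vphi\sth\Box\vphi\in z_0\}$ can be shrunk to an $R^S$-successor, and any saturated set contained in $\{\vphi\sth\Diamond\vphi\in z_0\}$ can be enlarged to one. Both parts run the Lindenbaum construction between a lower and an upper bound, using the $(\blacksquare)$-rules to show the relevant sequent is underivable. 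The two cases $\Box\psi\notin z$ and $\neg\Box\psi\in z$ of the Truth Lemma then use parts (i) and (ii) respectively. Without this modification of $R$ and the corresponding squeezed Lindenbaum argument, the completeness direction does not go through, so you should treat this as the missing step rather than as a difficulty to be absorbed into the Truth Lemma induction.
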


The logics $\ssbox$ are, in a sense, the equivalent of the modal logic ${\sf K}$ in the nonclassical settings. Turning to the modal logics $S_\Box$ we observe that these logics are precisely the logics of so-called idiosyncratic frames (Figure \ref{fig:refl}).

\begin{de}\label{def:idio}%\JS{I think `Idiosyncratic' is actually the correct technical term. At least I have seen it somewhere and it seems to get the idea across}
Let $F=(Z,R)$ be a frame. $F$ is called \emph{idiosyncratic} iff
	\[(\forall z_0,z_1\in Z)(Rz_0z_1\lra z_0=z_1).\]
\end{de}
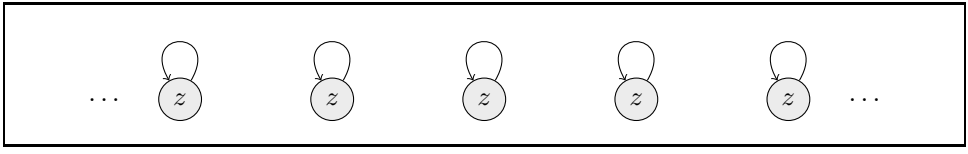
\begin{figure}[h]
\begin{framed}
\begin{tikzpicture}
	\node[world] at (0,1) (n3) {$z$};
\path[->] (n3) edge[reflexive above] (n3);
	\node[world] at (2,1) (n3) {$z$};
\path[->] (n3) edge[reflexive above] (n3);
	\node[world] at (-2,1) (n3) {$z$};
\path[->] (n3) edge[reflexive above] (n3);
	\node[world] at (-4,1) (n3) {$z$};
\path[->] (n3) edge[reflexive above] (n3);
	\node[world] at (4,1) (n3) {$z$};
\path[->] (n3) edge[reflexive above] (n3);
	\node at (5,1) {$\ldots$};
	\node at (-5,1) {$\ldots$};
\end{tikzpicture}
\end{framed}\label{fig:refl}\caption{An idiosyncratic frame.}
\end{figure}

The strategy employed in obtaining the adequacy of the logics $\ssbox$ -- involving of course the appropriate re-definition of saturation relative to $S_\Box$ -- yields:

\begin{prop}[Adequacy]\label{prop:adenai}
	%For $S\in \{\fde,\kthr,\bthr,\lp,\sthr\}$ and $\Gamma,\Delta$ sets of $\lbox$-sentences: $S_\Box \vdash \Gamma \Ra\Delta$ if and only if for all models $\mc{M}=(W,R,V)$ with $(W,Z)$ an idiosyncratic frame and for all $z\in Z$: if $\mc{M},z\vDash_S \gamma$ for all $\gamma\in \Gamma$, then $\mc{M},z\vDash_S \delta$ for some $\delta\in \Delta$. 
	Let $F=(Z,R)$ be an idiosyncratic frame. Then for any $\Gamma\subseteq \lbox$ and $\vphi \in \lbox$:		
						\[
							\Gamma\vDash_s^{\mf{F}} \Delta \;\;\text{iff}\;\;S_\Box\vdash \Gamma \Ra \Delta
						\]
					holds when 
						\begin{itemize-}
						    \item $\mf{F}$ is the class of four-valued models based on $F$ and $S$ is ${\sf FDE}$
						    \item $\mf{F}$ is the class of consistent models based on $F$ and $S$ is ${\sf K3}$, ${\sf B3}$, or ${\sf F3}$
						    \item  $\mf{F}$ is the class of complete models based on $F$ and $S$ is ${\sf LP}$
						    \item $\mf{F}$ is the class of symmetric models based on $F$ and $S$ is ${\sf KS3}$
					
						\end{itemize-} 
\end{prop}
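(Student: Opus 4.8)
The plan is to mirror the canonical‑model argument behind Proposition~\ref{prop:nonclade}, exploiting the fact that on an idiosyncratic frame the modal operator is semantically transparent. The single observation driving everything is that, since $Rzz_0$ holds for $z_0=z$ only, the infimum defining $\intfde{\Box\vphi}{M}{z}$ ranges over the singleton $\{\intfde{\vphi}{M}{z}\}$; hence
\[
\intfde{\Box\vphi}{M}{z}=\intfde{\vphi}{M}{z}
\]
at every $z\in Z$, for each scheme $s$ (the argument is insensitive to whether the meet is taken with respect to $\preceq$ or $\curlyeqprec$, since the infimum of a singleton is that element), and consequently $\intfde{\neg\Box\vphi}{M}{z}=\intfde{\neg\vphi}{M}{z}$ as well. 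For \textbf{soundness} ($S_\Box\vdash\Gamma\Ra\Delta$ implies $\Gamma\vDash_s^{\mf F}\Delta$) I would argue by induction on derivation length: the base rules of $S$ are sound by the same single‑world computations as in the non‑modal setting (cf.\ Proposition~\ref{prop:nonclade}), while the four rules of Definition~\ref{def:nailtr} merely insert or delete a $\Box$ (under a negation or not) in a formula without changing its value at any world, by the two displayed identities, and hence preserve $\vDash_s^{\mf F}$.

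For \textbf{completeness} I would recast the saturation machinery of Appendix~B relative to $S_\Box$. Call a pair $(\Pi,\Sigma)$ of sets of $\lbox$‑sentences \emph{$S_\Box$‑saturated} if it is \emph{exhaustive} ($\Pi\cup\Sigma=\lbox$), \emph{non‑trivial} ($S_\Box\nvdash\Pi\Ra\Sigma$), and closed under the rules of $S_\Box$ read as membership conditions; in particular the four box rules are to force the \emph{transparency conditions}
\[
\Box\vphi\in\Pi\iff\vphi\in\Pi,\qquad \neg\Box\vphi\in\Pi\iff\neg\vphi\in\Pi.
\]
A Lindenbaum‑style lemma then extends any non‑derivable $\Gamma\Ra\Delta$ to an $S_\Box$‑saturated pair with $\Gamma\seq\Pi$ and $\Delta\seq\Sigma$: enumerating $\lbox$ and deciding each formula in turn, one preserves non‑triviality by the same step used for $\ssbox$, namely that $S_\Box\nvdash\Pi\Ra\Sigma$ implies $S_\Box\nvdash\Pi,\vphi\Ra\Sigma$ or $S_\Box\nvdash\Pi\Ra\Sigma,\vphi$, which rests on cut and on the Reflexivity and Weakening Lemma.

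Given such a pair I would pick any world $z$ of the given idiosyncratic frame $F$ — one exists since $Z\neq\varnothing$, and every point of $F$ is reflexive and sees only itself — and set $V_z(p)=\both$ if $p,\neg p\in\Pi$; $=1$ if $p\in\Pi$ and $\neg p\notin\Pi$; $=0$ if $\neg p\in\Pi$ and $p\notin\Pi$; and $=\neither$ if $p,\neg p\notin\Pi$ (with $V$ arbitrary at the remaining, non‑interacting, worlds). For $\kthr,\lp,\sthr,\bthr,\fthr$ one checks, exactly as in Proposition~\ref{prop:nonclade}, that the relevant initial sequents of $S$ (e.g.\ $\vphi,\neg\vphi\Ra$ for consistency, $\Ra\vphi,\neg\vphi$ for completeness) render this valuation consistent, complete, or symmetric as required, so that $\mc M=(Z,R,V)\in\mf F$. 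The heart is the \textbf{truth lemma}, $\mc M,z\Vdash_s\vphi\iff\vphi\in\Pi$, by induction on $\vphi$: the propositional and (for $\fthr$) conditional cases are verbatim those of Proposition~\ref{prop:nonclade}, and the new cases $\Box\vphi,\neg\Box\vphi$ combine the semantic transparency with the transparency conditions on $(\Pi,\Sigma)$, so that by the induction hypothesis on $\vphi$ the value of $\Box\vphi$ at $z$ matches its membership status in $\Pi$. Applying the lemma at $z$ yields a model in $\mf F$ witnessing $\Gamma\nvDash_s^{\mf F}\Delta$, the required contrapositive.

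The step I expect to be the main obstacle is the design of saturation: the box‑closure conditions must be weak enough to survive the Lindenbaum extension (so that committing $\Box\vphi$ or $\neg\Box\vphi$ to a side never destroys non‑triviality beyond what committing $\vphi$, resp.\ $\neg\vphi$, already does) yet strong enough to pin down the membership of $\Box\vphi$ from that of $\vphi$. Equivalently, one must verify that the canonical accessibility relation induced by the four box rules is forced to be the identity — genuinely idiosyncratic — which is exactly what licenses evaluating the countermodel at a single reflexive world of the arbitrary given frame $F$. For $\fthr$ there is the subsidiary task of confirming that the clause for the primitive conditional $\fcond$ matches its sequent rules in the truth lemma, but this is orthogonal to the modal apparatus and is inherited unchanged from the non‑modal completeness argument.
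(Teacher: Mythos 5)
Your proposal is correct and follows essentially the route the paper intends: it instantiates the Appendix~B saturated-set strategy for $S_\Box$ (the paper only gestures at this, noting that the $\ssbox$ argument ``with the appropriate re-definition of saturation relative to $S_\Box$'' yields the result), with the key observation in both cases being that on an idiosyncratic frame $\Box$ is semantically transparent, so the canonical accessibility collapses to identity and the box cases of the truth lemma reduce to the transparency conditions $\Box\vphi\in\Pi\iff\vphi\in\Pi$ and $\neg\Box\vphi\in\Pi\iff\neg\vphi\in\Pi$ forced by the rules of Definition~\ref{def:nailtr}. The only caveat is that ``$V$ arbitrary at the remaining worlds'' should be read as ``arbitrary within the relevant class of valuations'' (e.g.\ still symmetric for $\sthr$), which is harmless since worlds of an idiosyncratic frame do not interact.
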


%%%%%%%%%Classical Logic%%%%%%%%%%%%%%%%%%%%%%%%%%%%%%%%%%%%%%%%%%%%%%%%%%%%%%%%%%%%%%%%%%%%%%%%%%%%%%%%%%%%%%%%%%%%%%%%%%%%%%%%%%%%%

\section{Modalized truth principles}\label{sec:clmltr}
We now move on to introducing classical, non-congruent modal logics, which we will show to be the modal logics of the truth theories to be considered. These modal logics have the particular feature that their inner logic, that is, the logic governing the scope of the modal operator will be one of the logics $S_\Box$ discussed in the previous section. After presenting these modal logics we introduce a novel version of possible world semantics in which we have both classical and non-classical worlds. We show that our modal logics are the logics of the classical worlds of so-called mixed idiosyncratic frames.  

\subsection{Axioms and rules}\label{sec:clmodt}

We start with the most basic system, which modalizes the clauses for the positive inductive definition underlying the Kripke-Feferman approach to truth.\footnote{As discussed in the introduction, \cite[\S12]{fef84} also considers modalizations of similar principles, but the presence of axioms of the form $\Box P\lra P$ for atomic $P$ of the language with $\Box$ starkly contrasts with our approach, in which we want to preserve uniform substitution at the expense of such principles for atomic formulas.  } Since it will not cause any trouble in what follows, we will list the principles our classical modal logics as axioms, even though strictly speaking our logics are formulated in classical sequent calculus suitably extending the sub-classical systems of \S\ref{sec:nocltr}. 

\begin{de}[Modal logic ${\sf BM}$]\label{def:logdef0}
	The modal logic ${\sf BM}$ extends classical propositional logic with:\footnote{We omit the sequent arrow in the formulation of the principles of {\sf M}.}
	\begin{flalign}
	\notag	&(\top)&&\Box\top&&\\
	\notag	&(\bot)&&\neg\Box\bot&&\\
	\notag	&(\neg)&&\Box\varphi\leftrightarrow\Box\neg\neg\varphi&&\\
	\notag	&(\wedge1)&&\Box(\varphi\wedge\psi)\leftrightarrow\Box\varphi\wedge\Box\psi&&\\
	\notag	&(\wedge2)&&\Box\neg(\varphi\wedge\psi)\leftrightarrow\Box\neg\varphi\vee\Box\neg\psi&&\\
	\notag	&(\vee1)&&\Box(\varphi\vee\psi)\leftrightarrow\Box\vphi\vee\Box\psi&&\\
	\notag	&(\vee2)&&\Box\neg(\varphi\vee\psi)\leftrightarrow\Box\neg\varphi\wedge\Box\neg\psi&&\\
	\notag	&(\Box1)&&\Box\varphi\leftrightarrow\Box\Box\varphi&&\\
	\notag	&(\Box2)&&\Box\neg\varphi\leftrightarrow\Box\neg\Box\varphi&&\\
	&\label{faithmod}\tag{\textup{{\sc faith}$^\Box$}}&&\Box\varphi\wedge\neg\Box\neg\varphi\rightarrow\varphi&&
	\end{flalign}
The modal logic $\mathsf{BM}^{\hyph}$  is just like $\mathsf{BM}$ but does not assume \eqref{faithmod}. 
\end{de}
\noindent
As stressed in the Introduction, the axiom \eqref{faithmod} is, as we will show, the distinctive axiom of classical Kripke-style theories of truth such as the Kripke-Feferman theories. In a nutshell it asserts that if a sentence $\vphi$ is non-classically true but not false, i.e., it is not both true and false, then $\vphi$ is also classically the case. Preempting our modal semantics, it asserts that if $\vphi$ has a classical truth value at a non-classical world then it will have the same truth value at the classical worlds that see it, i.e., it expresses that the valuation is faithful with respect to classical truth values.

The logic ${\sf BM}$ can then be extended with principles corresponding to the consistency, completeness, and symmetry of $\Box$.
\begin{de}[The logics $\mminus$,  ${\sf M}$, $\mathsf{M^n}$, $\mathsf{M^b}$]\label{logdef}
	The modal logic $\mathsf{M^n}$ extends ${\sf BM}$ with 
		\[
			\tag{{\sf D}} \neg\Box\neg\vphi\vee\neg\Box\vphi.
		\]
	The modal logic ${\sf M^b}$ extends ${\sf BM}$ with
		\[
			\tag{$\mathsf{D_c}$}\Box\neg\vphi\vee\Box\vphi.
		\]
	The modal logic ${\sf M}$ ($\mminus$) extend ${\sf BM}$ ($\mathsf{BM}^{\hyph}$) with 
		\[
			\tag{$\mathsf{DD_c}$} (\neg\Box\neg\vphi\vee\neg\Box\vphi)\vee(\Box\neg\psi\vee\Box\psi).
		\]
\end{de}

	\begin{lem}
		Let ${\sf M}$, $\mathsf{M^n}$, $\mathsf{M^b}$ be defined as in Definition \ref{logdef}. Then
		\begin{enumeratei}
			\item $\mathsf{M}\vdash(\Box\vphi \ra \vphi)\vee (\psi\ra \Box \psi)$
		
		\item
		
			$\mathsf{M^n}\vdash\Box \vphi \ra \vphi$
			
		\item $\mathsf{M^b}\vdash\vphi\ra \Box \vphi$.
		\end{enumeratei}
	\end{lem}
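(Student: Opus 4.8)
The plan is to prove (ii) and (iii) first, each of which combines a single consistency/completeness axiom with \eqref{faithmod}, and then to obtain (i) by a case split on $(\mathsf{DD_c})$ that reduces it to exactly the arguments used for (ii) and (iii). Throughout I reason purely classically, since all three systems extend classical propositional logic; in particular I freely use disjunctive syllogism, proof by cases, and reductio, and I treat the listed principles as axiom schemes (so each may be instantiated at any formula).

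For (ii), I would assume $\Box\vphi$ and aim at $\vphi$. Instantiating $(\mathsf{D})$ at $\vphi$ gives $\neg\Box\neg\vphi\vee\neg\Box\vphi$; since $\Box\vphi$ refutes the right disjunct, disjunctive syllogism yields $\neg\Box\neg\vphi$. Now $\Box\vphi\wedge\neg\Box\neg\vphi$ is precisely the antecedent of \eqref{faithmod}, so \eqref{faithmod} delivers $\vphi$. Discharging the assumption gives $\Box\vphi\ra\vphi$, i.e.\ $\mathsf{M^n}\vdash\Box\vphi\ra\vphi$.

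For (iii), I would assume $\vphi$ and derive $\Box\vphi$ by reductio: suppose $\neg\Box\vphi$. Instantiating $(\mathsf{D_c})$ at $\vphi$ gives $\Box\neg\vphi\vee\Box\vphi$, and $\neg\Box\vphi$ forces $\Box\neg\vphi$. The crucial move is to apply \eqref{faithmod} \emph{to $\neg\vphi$}, whose antecedent is $\Box\neg\vphi\wedge\neg\Box\neg\neg\vphi$: I already have $\Box\neg\vphi$, and by the axiom $(\neg)$, which asserts $\Box\vphi\leftrightarrow\Box\neg\neg\vphi$, my assumption $\neg\Box\vphi$ yields $\neg\Box\neg\neg\vphi$. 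Hence \eqref{faithmod} gives $\neg\vphi$, contradicting $\vphi$. Therefore $\Box\vphi$, so $\mathsf{M^b}\vdash\vphi\ra\Box\vphi$.

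Finally, for (i) I would instantiate $(\mathsf{DD_c})$ with the given $\vphi,\psi$, obtaining $(\neg\Box\neg\vphi\vee\neg\Box\vphi)\vee(\Box\neg\psi\vee\Box\psi)$, and split on its two main disjuncts. If $\neg\Box\neg\vphi\vee\neg\Box\vphi$ holds, the argument of (ii) (which used only this instance of $(\mathsf{D})$ together with \eqref{faithmod}) gives $\Box\vphi\ra\vphi$, establishing the left disjunct of the goal. If instead $\Box\neg\psi\vee\Box\psi$ holds, the argument of (iii) (which used only this instance of $(\mathsf{D_c})$, the axiom $(\neg)$, and \eqref{faithmod}) gives $\psi\ra\Box\psi$, establishing the right disjunct. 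Either way the disjunction $(\Box\vphi\ra\vphi)\vee(\psi\ra\Box\psi)$ follows. The only nonroutine point throughout is the double-negation bookkeeping in (iii) and case B of (i): \eqref{faithmod} is stated for a formula, and to exploit it for falsity one must pass through $(\neg)$ to rewrite $\Box\neg\neg\vphi$ as $\Box\vphi$; once this is noticed, every step is immediate classical propositional reasoning.
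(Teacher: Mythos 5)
Your proof is correct, and since the paper states this lemma without proof (evidently regarding it as a routine consequence of $(\mathsf{D})$/$(\mathsf{D_c})$/$(\mathsf{DD_c})$ together with \eqref{faithmod}), your derivation is exactly the intended one — including the one genuinely non-obvious step, namely applying \eqref{faithmod} to $\neg\vphi$ and using the axiom $(\neg)$ to convert $\neg\Box\vphi$ into $\neg\Box\neg\neg\vphi$. The case split on $(\mathsf{DD_c})$ for part (i) correctly reuses the two subarguments, so nothing is missing.
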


	\begin{remark}\label{rem:tito}
		It's easy to see that over $\bmminus$,  $\vphi\ra \Box \vphi$  entails ${\sf D_c}$ and {\sc faith}$^\Box$, and $\Box\vphi\ra\vphi$ entails ${\sf D}$ and {\sc faith}$^\Box$. Proposition \ref{prop:ademoc} and Corollary \ref{cor:ademocfaith} below will entail that, in stark contrast with what happens in the truth-theoretic side, the converse implications do not hold. 
	\end{remark}
	
	Next we turn to logic whose modalities are governed by the ${\sf b3}$- and ${\sf f3}$-evaluation schemata. For notational convenience, we let 
	\begin{align*}
	     \nablar\vphi&:= (\Box \vphi\vee \Box \neg\vphi)\\
	    \nabla\vphi&:=\neg \nablar \vphi\\
	    \nablar(\vphi,\psi)&:= (\nablar \vphi\land \nablar \psi)
	\end{align*}
	
	\begin{de}[$\mweak$ and $\mfef$]\hfill
		\begin{enumeratei}
		\item
		 $\mweak$ extends classical propositional logic with \emph{($\top$), ($\bot$), ($\neg$), ($\land1$), ($\vee 2$), ($\Box1$), ($\Box2$), \eqref{faithmod}, (${\sf D}$)}, and 
		\begin{align}
			\tag{$\vee3$}&\label{ortweak} \Box (\vphi\vee\psi) \lra \nablar( \vphi,\psi)\land (\Box \vphi\vee \Box \psi)\\
			\tag{$\land3$}&\label{andfweak} \Box\neg(\vphi\land\psi)\lra \nablar (\vphi,\psi)\land (\Box \neg\vphi\vee\Box \neg\psi)
		\end{align}
		We call ${\sf M^{w-}}$ the system $\mweak$ without \eqref{faithmod}. 
		\item $\mfef$ is formulated in $\lbox^\fcond$ and extends $\mweak$ with:
			\begin{align*}
			\tag{$\fcond$1} & \Box(\vphi\fcond \psi)\lra (\Box\neg \vphi\vee (\Box\vphi\land \Box \psi))\\
			\tag{$\fcond$2} &\Box(\neg(\vphi\fcond\psi))\lra (\Box \vphi\land \Box\neg\psi)
			\end{align*}
		We call ${\sf M^{f-}}$ the system $\mfef$ without \eqref{faithmod}.
		\end{enumeratei}
	\end{de}
	
	\begin{remark}\label{rem:extfcon}
		The status of $\fcond$ in $\mfef$ and variants thereof is peculiar. Internally it characterises the non-material conditional of $\fthr$, whereas externally it collapses into $\ra$. It can be easily verified in fact that $(\vphi\fcond\psi)\lra (\vphi \ra \psi)$ is derivable in $\mfef$-like theories, whereas from the semantics provided in the next section one can easily see that $\Box(\vphi\fcond\psi)\lra \Box(\vphi \ra \psi)$ is not. 
	\end{remark}

	The next Lemma will play a central role in what follows. It states that the derivability of sequents in the nonclassical logics of truth $S_\Box$ introduced in \S\ref{sec:nocltr} entails the derivability of specific conditionals in the classical systems that we are currently investigating. It is in this sense that the logics $S_\Box$ are the internal logic of the modal logics introduced in this section.
	\begin{lem}[Connecting Lemma]\label{lem:intlog} 
		For $(S,T)$ one of the pairs $(\kthr,\mcon)$, $(\lp,\mcom)$, $(\sthr,\M)$ $(\fde, {\sf BM})$, $(\bthr,\mweak)$, $(\fthr,\mfef)$: if $S_\Box\vdash \Gamma \Ra \Delta$, then $T \vdash \bigwedge\Box\Gamma \ra \bigvee\Box \Delta$, where $\Box X :=\{\Box \vphi \sth \vphi \in X\}$. 
	\end{lem}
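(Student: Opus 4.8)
The plan is to argue by induction on the length of a given $S_\Box$-derivation of $\Gamma\Ra\Delta$, associating to each sequent the conditional $\bigwedge\Box\Gamma\ra\bigvee\Box\Delta$ (reading the empty conjunction as $\top$ and the empty disjunction as $\bot$) and building, node by node, a $T$-derivation of it. Since every $T$ extends classical propositional logic, the entire argument can be carried out propositionally: at each step I would combine the conditionals delivered by the induction hypothesis and rewrite the principal boxed subformula using the relevant $\Box$-axiom of $T$ as a two-way equivalence.

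First I would dispatch the base cases. An instance of reflexivity $\Gamma,\vphi\Ra\vphi,\Delta$ translates to the tautology $\bigwedge\Box\Gamma\wedge\Box\vphi\ra\Box\vphi\vee\bigvee\Box\Delta$; the initial sequent $\bot\Ra$ translates to $\Box\bot\ra\bot$, which is the axiom $(\bot)$ of $T$, and dually $\Ra\top$ gives $(\top)$. The cases sensitive to the choice of $S$ are the characteristic initial sequents of the stronger base logics: $\vphi,\neg\vphi\Ra$ (consistency, present in $\kthr$, $\bthr$, $\fthr$) translates to $\neg(\Box\vphi\wedge\Box\neg\vphi)$, i.e.\ $(\mathsf{D})$; $\Ra\vphi,\neg\vphi$ (completeness of $\lp$) translates to $\Box\vphi\vee\Box\neg\vphi$, i.e.\ $(\mathsf{D_c})$; and the symmetric sequent $\vphi,\neg\vphi\Ra\psi,\neg\psi$ of $\sthr$ translates to $(\mathsf{DD_c})$. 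Thus the logic-specific initial sequents match exactly the logic-specific axioms of $\mcon$, $\mweak$, $\mfef$, $\mcom$, and $\M$.

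Next I would treat the inductive step by cases on the last rule. The four modal rules are immediate: $\Box{\rm l}$ and $\Box{\rm r}$ turn $\Box\vphi$ into $\Box\Box\vphi$, which $(\Box1)$ identifies, while $\neg\Box{\rm l}$ and $\neg\Box{\rm r}$ turn $\Box\neg\vphi$ into $\Box\neg\Box\vphi$, which $(\Box2)$ identifies; so the translated conditional is unchanged up to a $T$-provable equivalence. For the propositional rules of the base logic $S$ I would rewrite the principal boxed formula by the matching distribution axiom and then discharge the premises by pure propositional reasoning: $(\wedge1)$ for the $\wedge$-rules, $(\vee1)$ for the $\vee$-rules, $(\neg)$ for double negation, and $(\wedge2)$, $(\vee2)$ for the negated-connective (De Morgan) rules. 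For the pairs $(\fde,{\sf BM})$, $(\kthr,\mcon)$, $(\lp,\mcom)$, $(\sthr,\M)$ these are precisely the strong-Kleene distribution axioms of ${\sf BM}$.

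The hard part will be the weak-Kleene pairs $(\bthr,\mweak)$ and $(\fthr,\mfef)$, where the third value is infectious, so the $\vee$- and $\wedge$-rules of $S_\Box$ carry determinateness side-information and the correct distribution axioms are the weaker $(\vee3)$ and $(\wedge3)$, stated with $\nablar(\vphi,\psi)$. The delicate point is to verify that the determinateness premises generated by the weak-Kleene rules line up exactly with the $\nablar$-conjuncts appearing on the right-hand sides of $(\vee3)$ and $(\wedge3)$, so that this information is neither lost nor spuriously introduced; and, for $\mfef$, that the two sequent rules governing $\fcond$ are matched by $(\fcond1)$ and $(\fcond2)$. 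Once this bookkeeping is in place the remaining manipulations are again purely propositional, and the strong-Kleene and ${\sf FDE}$ cases are routine by comparison.
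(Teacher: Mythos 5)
Your proposal is correct and follows essentially the same route as the paper: an induction on the length of the $S_\Box$-derivation, matching each initial sequent and rule to the corresponding modal axiom of $T$ (the negation rules of $\kthr$/$\lp$ and the symmetry sequent of $\sthr$ yielding exactly $({\sf D})$, $({\sf D_c})$, $({\sf DD_c})$, the modal rules handled by $(\Box1)$/$(\Box2)$, and the rest by classical reasoning with the distribution axioms). The ``delicate point'' you flag for the weak-Kleene pairs---that the $\proplet{\cdot}$ side conditions of the $\bthr$/$\fthr$ rules must supply the $\nablar$-conjuncts of $(\vee3)$, $(\land3)$, $(\fcond1)$, $(\fcond2)$---is precisely the property \eqref{eq:connb3} that the paper isolates as the crux of its own (equally schematic) proof, so nothing essential is missing.
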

 	\begin{proof}
	The proof is by induction on the length of the proof in the relevant logics. Crucially, the proof for the pairs $({\sf B3}_\Box,{\sf M^w})$ and $(\fthr,\mfef)$ rests on the following property: 
	for all $\vphi \in \lbox(\lbox^\fcond)$, and $S\in \{{\sf M^w},{\sf M^f}\}$, 
		\begin{equation}\label{eq:connb3}
		    \text{$S\vdash \nabla \vphi$ iff there is a $p\in {\rm Prop}(\vphi)$ such that $S\vdash \nabla p$.}
		\end{equation}

	\end{proof}
	
		It should be noticed that, since Lemma \ref{lem:intlog} does not employ {\sc faith$\Box$}, it can be generalized to the theories without such assumption. %In fact we have it for the pairs $(\fde,\bmminus)$, $(\kthr,\bmminus+{\sf D})$, and $(\lp,\bmminus+{\sf D_c})$.  
		 In the following section, it will be useful to refer directly to the \emph{inner}, nonclassical logic of our classical modal logics of truth. The next definition makes this idea precise. 
	
	\begin{de}[Inner Logic]\label{def:InLo}Given Lemma \ref{lem:intlog}, we set 
	
	\begin{align*}
		{\rm I}(S)&:=\begin{cases}{\sf FDE}_\Box, &\text{ if $S={\sf BM}^-$}\\
			{\sf KS3}_\Box, &\text{ if $S={\sf M}^-$}\\
			{\sf K3}_\Box, &\text{ if $S={\sf BM}^{-}+{\sf D}$}\\
			{\sf LP}_\Box, &\text{ if $S={\sf BM}^{-}+{\sf D_c}$}\\
			{\sf B3}_\Box, &\text{ if $S={\sf M^{w^-}}$}\\
			\fthr_\Box, &\text{if $S={\sf M^{f^-}}$}
	\end{cases}\end{align*}
	and call $I(S)$ the inner logic $S$.\end{de}
	\subsection{Semantics}
In this section we introduce the anticipated novel semantics for the logics described in the previous section. This amounts to considering frames endowed with classical and nonclassical worlds. In particular we are interested in what we call \emph{mixed, idiosyncratic frames} (cf.~Figure \ref{fig:miidfr}), that is, frames in which a classical world sees exactly one idiosyncratic non-classical world (in the sense of Definition \ref{def:idio}).\footnote{Notice that, in Figure  \ref{fig:miidfr}, the subframe $(\{z\},\{\lan z,z\ran\})$ is not a mixed idiosyncratic frame in its own right.}
	\begin{de}[mixed idiosyncratic frame] Let $W,Z$ be disjoint nonempty sets and $R\seq W\cup Z\times Z$. A \emph{mixed idiosyncratic frame} satisfies:
		\begin{flalign*}
			\tag{{\sc functionality}}&&&&&\forall w\in W\,\exists !v\in Z(wRv)&&\\
%\tag{II}&&&\forall w\in W\, \forall v\in Z(wRv\rightarrow\forall u\in Z(wRu\rightarrow u=v))&&\\
			\tag{{\sc idiosyncracy}}&&&&&\forall u,v\in Z(uRv\leftrightarrow u=v)&&
		\end{flalign*}
		We call a mixed idiosyncratic frame \emph{single-rooted} if $W$ is a singleton. 
	\end{de}
	
	%C: generalising the conditions imposing that, starting with the classical world and looking at the nonclassical ones you need some form of transitivity. 
	%C: is the picture enough? Or are we missing something?
	%C: also, what's the "mixed" doing here?

	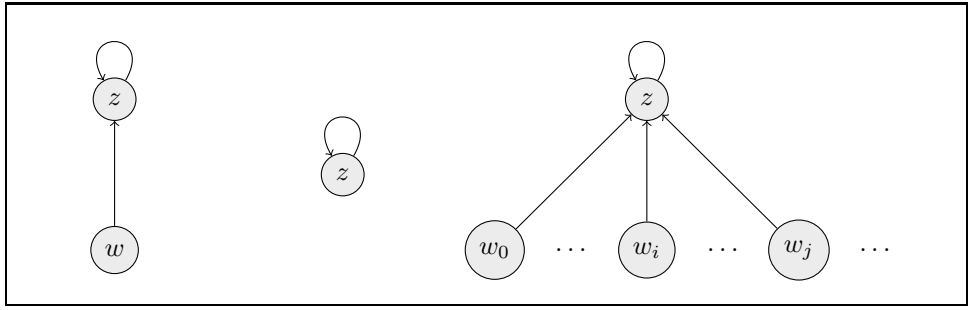
\begin{figure}[h]
		\begin{framed}
			\begin{tikzpicture}
				\node[draw,world] (n1) at (-3,0) {$w$};
				\node[draw,world] (n2) at (-3,2) {$z$};
				\draw[->] (n1) -- (n2);
				\path[->] (n2) edge[reflexive above] (n2);
				\node[world] at (0,1) (n3) {$z$};
				\path[->] (n3) edge[reflexive above] (n3);
				\node[draw,world] (n4) at (4,0) {$w_i$};
				\node[draw,world] (n5) at (2,0) {$w_0$};
				\node[draw,world] (n6) at (6,0) {$w_j$};
				\node[draw,world] (n7) at (4,2) {$z$};
				\node[] at (3,0) {$\ldots$};
				\node[] at (5,0) {$\ldots$};
				\node[] at (7,0) {$\ldots$};
				\draw[->] (n4) -- (n7);
				\draw[->] (n5) -- (n7);
				\draw[->] (n6) -- (n7);
				\path[->] (n7) edge[reflexive above] (n7);
			\end{tikzpicture}
		\end{framed}
		\caption{\label{fig:miidfr} Example of a mixed idiosyncratic frame}
	\end{figure}

	Mixed, idiosyncratic frames give rise to suitable models, once they are coupled with suitable valuations: in this context, a valuation takes a classical or a nonclassical world and a propositional atom and returns values in a set $X$ with $\{0,1\}\subseteq X\subseteq \{0,1,{\sf b},{\sf n}\}$. 
	\begin{de}[mixed valuations]\label{dfn:mixval}
		For $(W,Z,R)$ a mixed, idiosyncratic frame, a \emph{valuation}  takes a member of $W\cup Z$ and a $p\in {\rm Prop}$ and returns a value in $X$ with $\{0,1\}\subseteq X\subseteq\{0,1,\both,\neither\}$ and $X=\{0,1\}$ for $w\in W$. In particular we call a valuation:
		
		%C: Here I moved straight to valuations that distinguish between classical and nonclassical worlds. In the general setting we need more general valuations (do we?). 
		\begin{itemize}\setlength\itemsep{1ex}
		    \item \emph{four-valued}, if $(\forall w\in W)(\forall p\in {\rm Prop})(V_w(p)\in \{0,1\})$ and $(\forall z\in Z)(\forall p\in {\rm Prop})(V_z(p)\in\{0,1,\both,\neither\}.$
			\item  \emph{consistent}, if $(\forall w\in W)(\forall p\in {\rm Prop})(V_w(p)\in \{0,1\})$ and $(\forall z\in Z)(\forall p\in {\rm Prop})(\;V_z(p)\in \{1,0,\neither\})$;
					
			\item \emph{complete}, if $(\forall w\in W)(\forall p\in {\rm Prop})(V_w(p)\in \{0,1\})$ and $(\forall z\in Z)(\forall p\in {\rm Prop})(\;V_z(p)\in \{1,0,\both\})$;
			
			\item \emph{symmetric}, if $(\forall w\in W)(\forall p\in {\rm Prop})(V_w(p)\in \{0,1\})$ and for all $z\in Z$, either $(\forall p\in {\rm Prop})(\;V_z\in \{1,0,\both\})$ or $(\forall p\in {\rm Prop})(\;V_z(p)\in \{1,0,\neither\})$ but not both. 
			
			%\item \emph{Four-valued},  if $(\forall w\in W)(\forall p\in {\rm Prop})(V_w(p)\in \{0,1\})$ and $(\forall z\in Z)(\forall p\in {\rm Prop})(\;V_z(p)\in \{1,0,\neither,\both\})$.
			
			\item \emph{faithful}, if for all $w\in W$, $z\in Z$, and $p\in {\rm Prop}$, $Rwz$ entails that $V_w(p)=V_z(p)$ if $V_z(p)\in \{0,1\}$. 
		
		\end{itemize}
	\end{de}

	\begin{de}[mixed idiosyncratic model]
		With $(W,Z,R)$ a mixed, idiosyncratic frame and $V$ a mixed valuation, the tuple $\mc{M}:=(W,Z,R,V)$ is \emph{mixed idiosyncratic model}. $\mc{M}$ is consistent (complete, symmetric, faithful), if $V$ is consistent (complete, symmetric, faithful). A model is \emph{single-rooted} if it is based on a single-rooted frame. 
	\end{de}
	
	The definition of truth in a mixed, idiosyncratic model combines the clauses of classical and nonclassical satisfaction. 
	
	\begin{de}[truth in mixed idiosyncratic models]\label{def:trumim} 
		Let $(W,Z,R)$ be a mixed idiosyncratic  frame and $\mc M:=(W,Z,R,V)$ a mixed idiosyncratic model and $s\in \{{\sf fde},{\sf k3},{\sf b3},{\sf f3},{\sf lp},{\sf ks3}\}$. In defining truth in a model we distinguish between truth in a model at a classical world and at a nonclassical world.
		\begin{itemize}
		    \item[(i)] Let $z\in Z$. Then $\intfde{\vphi}{M}{z}$ is defined as in Definition \ref{def:nctm}. As in Definition \ref{def:nctm} we write $\mc{M},z\Vdash_s \vphi$ iff $\intfde{\vphi}{M}{z}\in\{1,{\sf b}\}$
		    \item[(ii)] Let $w\in W$. Then $\intfde{\vphi}{M}{w}$ is also defined using the clauses of Definition \ref{def:nctm} with the exception of $\varphi$ is $\Box\psi$ for which we have:
		    \begin{align*}\intfde{\vphi}{M}{w}&=
								\begin{cases}
									1,&\text{if $\mc{M},z\Vdash_s \psi$, for all $z$ with $Rwz$};\\
									0,&\text{otherwise.}
								\end{cases}\end{align*}
		Again we write $\mc{M},w\Vdash_s \vphi$ iff $\intfde{\vphi}{M}{w}\in\{1,{\sf b}\}$. Notice that in fact a formula $\vphi$ will not receive a nonclassical truth value at a classical world.   
		\end{itemize}
	\end{de}

	%consequence for strong completeness
	Given the nature of mixed models, the definition of consequence now splits into two notions of truth preservation, one at classical worlds, and one at nonclassical worlds. The latter is  simply a reformulation in the present context of the notion of consequence from Definition \ref{def:connoc}.
	\begin{de}[consequence]
	 With $\mf{S}$ a class of $\lbox$-models based on a mixed, idiosyncratic frame $(W,Z,R)$, $s\in \{{\sf fde},{\sf k3},{\sf b3},{\sf lp},{\sf f3},{\sf ks3}\}$,  $\Gamma \subseteq {\rm Sent}_{\lbox}$, and $\vphi \in {\rm Sent}_{\lbox}$, we let
	 \begin{enumeratei}
	 	    \item $\Gamma\Vdash_s^\mf{S} \vphi\;:\Lra$ for all $\mc{M}\in \mf{S}$ and $w\in W$, if $\mc{M},w\Vdash_s \gamma$ for all $\gamma \in \Gamma$, then $\mc{M},w\Vdash_s \vphi$. 
	 	     \item $\Gamma\vDash_s^\mf{S}\Delta\;:\Lra$ for all $\mc{M}\in \mf{S}$ and $z\in Z$, if $\mc{M},z\Vdash_s \gamma$ for all $\gamma \in \Gamma$, 
	 	     \item[] then $\mc{M},z\Vdash_s \delta$ for all $\delta \in \Delta$. 
	 \end{enumeratei}
	 
	\end{de}
	
	In the rest of the section we will prove the adequacy of our  classical modal logics with respect to the semantics just introduced. As we have already seen with the notion of consequence, the adequacy theorem we are about to prove splits in two: one clause concerns the adequacy of the internal logics with respect to the nonclassical semantics of the previous section; the other clause concerns directly our classical modal logics. However, the proof of the former claim follows directly from Proposition \ref{prop:adenai}. Therefore we will be mainly occupied with establishing the latter clause. The following adequacy result will not only tell us that the non-congruent modal logics we introduced are complete with respect to the classical worlds at mixed idiosyncratic frames but also that the logic that governs the transformations inside the scope of the modal operator in these logics is precisely their inner logic  in the sense of Definition \ref{def:InLo}.

		\begin{prop}[adequacy]\label{prop:ademoc}
		%	Let $S\in \{{\sf BM},{\sf M}, {\sf M^n}, {\sf M^b},{\sf M^w}\}$ and $F$ be a mixed, idiosyncratic frame. 
		Let $F=(W,Z,R)$ be a mixed, idiosyncratic frame. Then for any  $\Gamma,\Delta\subseteq \lbox$ and $\vphi \in \lbox$, the claims	
						\begin{enumeratei}
							\item\label{ademoci} $\Gamma\Vdash_s^{\mf{F}} \vphi \;\;\text{iff}\;\;\Gamma\vdash_S \vphi$
							\item\label{ademocii} $\Gamma\vDash_s^{\mf{F}} \Delta \;\;\text{iff}\;\;{\rm I}(S) \vdash \Gamma \Ra \Delta$
						\end{enumeratei}
					hold when 
						\begin{itemize-}
							\item $\mf{F}$ is the class of mixed, idiosyncratic models and $(s,S)$ are $({\sf fde}, {\sf BM}^-)$;
							\item $\mf{F}$ is the class of mixed, idiosyncratic, symmetric models and $(s,S)$ are $({\sf ks3}, {\sf M^-})$;
							\item $\mf{F}$ is the class of mixed, idiosyncratic, consistent models and $(s,S)$ are either $({\sf k3},{\sf BM^-+D})$, $({\sf b3},{\sf M^{w-}})$, or $({\sf f3},{\sf M^{f-}})$;
							\item $\mf{F}$ is the class of mixed, idiosyncratic, complete models and $(s,S)$ is $(${\sf lp}, ${\sf BM^-}$ ${\sf +D_c})$
						\end{itemize-}
				%
%				\item 
%					With $\mf{F}$ the class of mixed, faithful models on $F$, and for any $\Gamma\subseteq \lbox$ and $\vphi \in \lbox$:
%						\[
%							\Gamma\Vdash_{\mf{F}} \vphi \;\;\text{iff}\;\;S\vdash \vphi.
%						\]
%			\end{enumeratei}
		\end{prop}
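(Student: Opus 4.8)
The plan is to treat the two equivalences separately, since claim \ref{ademocii} is essentially immediate while claim \ref{ademoci} carries all the work. For claim \ref{ademocii}, note that the $s$-value of any $\lbox$-formula at a world $z\in Z$ of a mixed idiosyncratic model depends only on the idiosyncratic subframe $(\{z\},\{\langle z,z\rangle\})$ and the restriction of $V$ to $z$: by {\sc idiosyncracy} the world $z$ sees only itself, so the $\Box$-clause collapses to $\intfde{\Box\chi}{M}{z}=\intfde{\chi}{M}{z}$. Hence $\Gamma\vDash_s^{\mf F}\Delta$ coincides with idiosyncratic consequence in the sense of the previous section, and since $I(S)=S_\Box$ by Definition \ref{def:InLo}, Proposition \ref{prop:adenai} yields $\Gamma\vDash_s^{\mf F}\Delta$ iff $I(S)\vdash\Gamma\Ra\Delta$, matching the valuation type to the logic as in that proposition.

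For soundness of claim \ref{ademoci} I would verify that every axiom of $S$ is true at each classical world $w$ of each $\mc M\in\mf F$ and that the rules preserve truth at classical worlds. Since $V_w$ is two-valued and every connective clause preserves $\{0,1\}$ at $w$ (with $\Box\psi$ always receiving a classical value), truth at $w$ is classical, so classical propositional logic and modus ponens are unproblematic. By {\sc functionality} $w$ sees a unique $z$, and $\intfde{\Box\chi}{M}{w}=1$ iff $\mc M,z\Vdash_s\chi$; thus each modal axiom reduces to a property of the designated set at $z$ under the $s$-clauses --- e.g.\ $(\wedge1)$ to the fact that $\chi\wedge\eta$ is designated at $z$ iff both $\chi,\eta$ are, and $(\Box1),(\Box2)$ to the collapse $\intfde{\Box\chi}{M}{z}=\intfde{\chi}{M}{z}$ forced by idiosyncracy. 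The extra axioms correspond exactly to the valuation-type restrictions: {\sf D} holds because a consistent $z$ cannot designate both $\chi$ and $\neg\chi$, ${\sf D_c}$ because a complete $z$ must designate one of them, similarly for symmetric models and ${\sf DD_c}$, while $(\vee3),(\wedge3),(\fcond 1),(\fcond 2)$ are checked against the weak-Kleene and Feferman--Aczel clauses.

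For completeness I argue contrapositively. If $\Gamma\nvdash_S\vphi$ then $\Gamma\cup\{\neg\vphi\}$ is $S$-consistent; extend it by Lindenbaum to a maximal $S$-consistent set $w^*$, the intended single root. Put $\Gamma_z:=\{\chi:\Box\chi\in w^*\}$ and $\Delta_z:=\{\chi:\Box\chi\notin w^*\}$. The crucial step is that $(\Gamma_z,\Delta_z)$ is $I(S)$-saturated, i.e.\ $I(S)\nvdash\Gamma_z\Ra\Delta_z$: were $I(S)\vdash\Gamma_z^0\Ra\Delta_z^0$ for finite subsets, the Connecting Lemma (Lemma \ref{lem:intlog}) would give $S\vdash\bigwedge\Box\Gamma_z^0\to\bigvee\Box\Delta_z^0$; since each $\Box\chi$ with $\chi\in\Gamma_z^0$ lies in $w^*$, maximality forces $\bigvee\Box\Delta_z^0\in w^*$, hence $\Box\delta\in w^*$ for some $\delta\in\Delta_z^0$, contradicting $\delta\in\Delta_z$. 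By the completeness half of Proposition \ref{prop:adenai} there is then an idiosyncratic $I(S)$-model realizing $\Gamma_z$ as its designated set and $\Delta_z$ as its non-designated set at some reflexive world; take that world as $z$ and form the single-rooted mixed idiosyncratic model $\mc M=(\{w^*\},\{z\},R,V)$ with $Rw^*z$, $Rzz$, $V_z$ inherited, and $V_{w^*}(p)=1$ iff $p\in w^*$. Then $\mc M$ has the required type since the realizing model does, and a routine induction gives the truth lemma $\mc M,w^*\Vdash_s\chi$ iff $\chi\in w^*$: the Boolean cases are classical, and for $\Box\chi$ one has $\mc M,w^*\Vdash_s\Box\chi$ iff $\chi$ is designated at $z$ iff $\chi\in\Gamma_z$ iff $\Box\chi\in w^*$. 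As $\Gamma\seq w^*$ and $\vphi\notin w^*$, this refutes $\Gamma\Vdash_s^{\mf F}\vphi$.

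The main obstacle is the saturation step: extracting the ``inner content'' $\Gamma_z$ of the maximal consistent classical set $w^*$ and showing it is realizable at a nonclassical world. This is precisely where the Connecting Lemma does its work, converting derivability in the inner logic $I(S)$ into classical conditionals about $\Box$, so that the disjunction property of $w^*$ can be played against it; everything else is bookkeeping once Proposition \ref{prop:adenai} supplies the idiosyncratic world and Definition \ref{def:InLo} matches it to $I(S)$.
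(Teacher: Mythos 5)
Your proof is correct and follows essentially the same route as the paper: claim (ii) is reduced to the adequacy of $S_\Box$ over idiosyncratic frames, and claim (i) is proved via a Henkin-style construction with a maximally $S$-consistent classical root and an ${\rm I}(S)$-saturated nonclassical world, with the Connecting Lemma converting inner derivability into classical $\Box$-conditionals exactly as in the paper's existence lemma. The only difference is organizational: you build a tailored single-rooted countermodel whose nonclassical world has $\Gamma_z$ as its exact designated set (playing the disjunction property of $w^*$ against the full complement $\Delta_z$), whereas the paper assembles a global canonical model from all maximally consistent and all saturated sets and proves a general truth lemma.
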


		%I believe the following is much more general, but for now let's keep this. 
		
% 		\begin{lem}\label{lem:consat}
% 			Let $S\in \{{\sf BM}^-,{\sf M}^-, {\sf BM^-+D}, {\sf BM^-+D_c},{\sf M^{w-}}\}$ and $\mf{G}$ the corresponding class of models based on mixed, idiosyncratic frames given by Proposition \ref{prop:ademoc}. Then, the following claims are equivalent:
% 				\begin{enumeratei}
% 					\item any $S$-consistent set of sentences is satisfiable in $\mf{G}$ 
% 					\item if $\;\;\Gamma\Vdash_s^{\mf{G}} \vphi \;\;\text{then}\;\;\Gamma\vdash_S \vphi$.
% 				\end{enumeratei}
% 		\end{lem}
% 	\JS{Do we really need to state lemma \ref{lem:consat}? We need to say something about it. Also in the paragraph below we say that canonical models are essential part of the adequacy theorem. Are they really? As it stands they seem an essential part of Lemma \ref{lem:consat} which figures in the proof of adequacy... we need to include an explicit proof of the Lemma at the moment it is muddled up with the adequacy proof.}	

	A fundamental ingredient of the proof of the adequacy theorem is the definition of canonical models for our logics. Such models will reflect the mixed nature of our frames: for each logic $S$, classical worlds will be maximally $S$-consistent sets of sentences, whereas nonclassical worlds will be ${\rm I}(S)$-saturated sets.

	\begin{de}[canonical model]
		\hfill
			\begin{enumeratei}
				\item The \emph{canonical} model for $S$ -- $S\in \{{\sf BM}^-,{\sf M}^-, {\sf BM^-+D}, {\sf BM^-+D_c},{\sf M^{w-}},{\sf M^{f-}}\}$ -- is the structure $\mc{M}^S:=(W^S,Z^S,R^S,V^S)$, where 
					\begin{itemize-}
						\item $W^S$ is the set of maximally $S$-consistent sets of sentences
						\item $Z^S$ is the set of ${\rm I}(S)$-saturated sets, where ${\rm I}(S)$ is the internal logic of $S$%CARLO: this needs to be defined properly: an extension of $\{ (\vphi,\psi) \sth S \vdash \T\corn{\vphi}\ra \T\corn{\psi}\}$
						\item 
						    
						       \begin{align*}
						            R^S(x,y):\Lra \;& \big( x\in W^S\land \{\vphi \in \lbox\sth \Box\vphi \in x      \}\subseteq y\big)\;\vee\\
						                        &\big(x\in Z^S\land \{\vphi \sth \Box \vphi \in x\}=y=\{\neg \vphi\sth \neg \diamond \vphi \in x\}\big)
						      \end{align*}
						    
						\item $V^S:=V^{{\rm I}(S)}$ (cf.~definition \ref{def:cannon})
					\end{itemize-}
				\item As before, $(W^S,Z^S,R^S)$ is the \emph{canonical frame} for $S$.
			\end{enumeratei}
	\end{de}
	
	\begin{lem}[existence]\label{lem:exipro}
		%: Simple claim:
		With $S\in \{{\sf BM}^-,{\sf M}^-, {\sf BM^-+D}, {\sf BM^-+D_c},{\sf M^{w-}},{\sf M^{f-}}\}$, $v\in W^S\cup Z^S$, and $\vphi\in \lbox$, if $ \Box\vphi \notin v$, then there is a $z\in Z^S$ such that $R^Svz$ and $\vphi \notin z$. 
		%With $S\in \{{\sf BM},{\sf M}, {\sf M^n}, {\sf M^b},{\sf M^w}\}$, $v\in W^S\cup Z^S$, and $\vphi\in \lbox$,
	\end{lem}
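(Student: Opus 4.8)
The plan is to mimic the classical existence (or ``diamond'') lemma for canonical models, but to proceed by cases according to the two disjuncts defining $R^S$, since the relation behaves quite differently at classical and at nonclassical worlds. Throughout I write $\Gamma_v:=\{\psi\sth \Box\psi\in v\}$ and use that, by construction, $W^S$ consists of maximally $S$-consistent sets and $Z^S$ of ${\rm I}(S)$-saturated sets. I will freely use the Connecting Lemma (Lemma \ref{lem:intlog}) in the form valid for the {\sc faith}$^\Box$-free systems noted in the remark following it: if ${\rm I}(S)\vdash \Gamma\Ra\Delta$ then $S\vdash\bigwedge\Box\Gamma\ra\bigvee\Box\Delta$. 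I will also use the saturation (Lindenbaum) lemma for ${\rm I}(S)$ from Appendix B, together with the standard closure property of saturated sets: if $\Gamma_0\seq z$ is finite and ${\rm I}(S)\vdash\Gamma_0\Ra\chi$, then $\chi\in z$.

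For the case $v\in W^S$ I first show that the sequent $\Gamma_v\Ra\varphi$ is \emph{not} derivable in ${\rm I}(S)$. Suppose it were; then, by the clause on derivability for arbitrary sets, there is a finite $\Gamma_0\seq\Gamma_v$ with ${\rm I}(S)\vdash\Gamma_0\Ra\varphi$, whence the Connecting Lemma yields $S\vdash\bigwedge\Box\Gamma_0\ra\Box\varphi$. But every conjunct of $\bigwedge\Box\Gamma_0$ lies in $v$ by definition of $\Gamma_v$, so $\bigwedge\Box\Gamma_0\in v$ because $v$ is maximally $S$-consistent, hence deductively closed and closed under conjunction; therefore $\Box\varphi\in v$, contradicting the hypothesis $\Box\varphi\notin v$. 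Having established ${\rm I}(S)\nvdash\Gamma_v\Ra\varphi$, I extend this underivable sequent to a saturated set $z\in Z^S$ with $\Gamma_v\seq z$ and $\varphi\notin z$. Then $\Gamma_v\seq z$ is exactly the first disjunct of $R^S v z$, and $\varphi\notin z$, as required.

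For the case $v\in Z^S$ the successor is essentially forced. Since $v$ is ${\rm I}(S)$-saturated it is closed under ${\rm I}(S)$-derivability, and the rules $(\Box\mathrm{l}),(\Box\mathrm{r})$ applied to reflexivity give ${\rm I}(S)\vdash\Box\psi\Ra\psi$ and ${\rm I}(S)\vdash\psi\Ra\Box\psi$, so $\Box\psi\in v\Lra\psi\in v$; likewise $(\neg\Box\mathrm{l}),(\neg\Box\mathrm{r})$ together with double-negation yield $\neg\Diamond\psi\equiv\Box\neg\psi$ in ${\rm I}(S)$ and hence $\neg\Diamond\psi\in v\Lra\neg\psi\in v$. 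Consequently $z:=\Gamma_v=\{\psi\sth\Box\psi\in v\}=v$, which is automatically an element of $Z^S$. The nonclassical clause of $R^S$ constrains the negative part of the successor via $\Diamond$, and this coheres with the positive description because $\neg\psi\in z\Lra\Box\neg\psi\in v\Lra\neg\Diamond\psi\in v$, so the $\Box$- and $\Diamond$-conditions pin down the same set and $R^S v z$ holds. Finally $\Box\varphi\notin v$ gives $\varphi\notin v=z$ by the absorption just noted, which is the remaining conclusion.

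The routine parts are the absorption facts $\Box\psi\dashv\vdash\psi$ and $\neg\Box\psi\dashv\vdash\neg\psi$ in ${\rm I}(S)=S_\Box$ and the bookkeeping with finite subsequents. The real work is concentrated in the $v\in W^S$ case, and specifically in the transfer of \emph{underivability}: the Connecting Lemma only transports derivabilities from ${\rm I}(S)$ to $S$, so the argument must be run contrapositively, and it leans essentially on $v$ being maximally $S$-consistent to convert $S\vdash\bigwedge\Box\Gamma_0\ra\Box\varphi$ into the membership $\Box\varphi\in v$. The one delicate point on the definitional side is checking that the $\Box$- and $\Diamond$-descriptions of the nonclassical successor genuinely agree, which is where the interplay of negation with the idiosyncratic rules for $\Box$ is used.
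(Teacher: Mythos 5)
Your argument is correct and follows essentially the same route as the paper's: for $v\in W^S$ the paper likewise argues contrapositively that ${\rm I}(S)\nvdash\Theta\Ra\vphi$ for any $\Theta\seq\{\psi\sth\Box\psi\in v\}$ via the Connecting Lemma and maximal $S$-consistency of $v$, and then invokes the Lindenbaum/saturation construction to produce the required $z$, while the $v\in Z^S$ case is dismissed as immediate (your explicit verification that the idiosyncratic absorption $\Box\psi\dashv\vdash\psi$ forces the successor to be $v$ itself is exactly what makes it so).
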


	\begin{proof}
		The proof essentially employs the connecting Lemma \ref{lem:intlog}.  The case in which $v\in Z^S$ is immediate.	If $v\in W^S$, then $\neg \Box \varphi  \in v$. Next, we notice that for any $\Theta \subseteq \{\psi \in \lbox\sth \Box \psi \in v\}$, 
		\begin{equation}\label{eq:exipro}%remove label?
		    {\rm I}(S)\nvdash \Theta\Ra \vphi.
		\end{equation}
		In fact, if $\Theta\Ra \vphi$ were derivable, by Lemma \ref{lem:intlog} we would have $\bigwedge\Box \Theta \ra \Box \vphi$, and therefore $\Box\vphi \in v$, contradicting our assumption. By following the blueprint of Lemmata \ref{lem:genlin} and \ref{lem:exist}, we construct an ${\rm I}(S)$-saturated set $z$ such that $R^Svz$ and $\vphi \notin z$.
	\end{proof}

	\begin{lem}[truth]
		With $S\in \{{\sf BM}^-,{\sf M}^-, {\sf BM^-+D}, {\sf BM^-+D_c},{\sf M^{w-}},{\sf M^{f-}}\}$, $v\in W^S\cup Z^S$, and $\vphi\in \lbox$, 
		\[
			\vphi \in v \text{ if and only if } \mc{M}^S,v\Vdash_s \vphi
		\]
	\end{lem}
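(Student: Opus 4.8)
The plan is to prove the truth lemma by induction on the complexity of $\vphi$, distinguishing throughout between the case $v\in Z^S$ (a nonclassical, ${\rm I}(S)$-saturated world) and the case $v\in W^S$ (a classical, maximally $S$-consistent world). For $v\in Z^S$ the statement is essentially the content of the adequacy Proposition \ref{prop:adenai} applied to the inner logic ${\rm I}(S)$, since the restriction of the canonical model to $Z^S$ together with $R^S$ on $Z^S$ is exactly the idiosyncratic canonical frame for ${\rm I}(S)$ whose valuation is $V^{{\rm I}(S)}$; so I would cite that result to dispatch the nonclassical worlds, where $\mc{M}^S,z\Vdash_s\vphi$ is defined via the $s$-interpretation of Definition \ref{def:nctm}. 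The genuinely new work is at the classical worlds $w\in W^S$, and there the atomic and Boolean cases are routine from maximal $S$-consistency (classical propositional reasoning gives $\neg\vphi\in w\Lra\vphi\notin w$, $\vphi\wedge\psi\in w\Lra\vphi\in w$ and $\psi\in w$, and so on), so that a formula never takes a nonclassical value at $w$, as the definition already anticipates.

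The crux is the modal case $\vphi=\Box\psi$ at a classical world $w$. By the truth clause (ii) in Definition \ref{def:trumim}, $\mc{M}^S,w\Vdash_s\Box\psi$ holds iff $\mc{M}^S,z\Vdash_s\psi$ for every $z$ with $R^Swz$; by {\sc functionality} there is exactly one such $z$, namely $z_w:=\{\chi\sth\Box\chi\in w\}$ (the unique witness supplied by $R^S$ on classical worlds). So I must show
\[
\Box\psi\in w \;\;\Longleftrightarrow\;\; \mc{M}^S,z_w\Vdash_s\psi.
\]
For the right-to-left direction: by the inductive hypothesis applied at the nonclassical world $z_w$, $\mc{M}^S,z_w\Vdash_s\psi$ is equivalent to $\psi\in z_w$, i.e.\ to $\Box\psi\in w$ by the definition of $z_w$; this also handles left-to-right, since if $\Box\psi\in w$ then $\psi\in z_w$ and the inductive hypothesis gives $\mc{M}^S,z_w\Vdash_s\psi$. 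The one delicate point is ensuring that $z_w$ is genuinely a member of $Z^S$, i.e.\ an ${\rm I}(S)$-saturated set, and that $R^Swz_w$ holds with $z_w$ the \emph{unique} $R^S$-successor; the existence Lemma \ref{lem:exipro} guarantees that $z_w$ can be taken saturated (and supplies, in the contrapositive form, a successor avoiding any $\psi$ with $\Box\psi\notin w$), which is precisely what makes the equivalence $\mc{M}^S,z_w\Vdash_s\psi\Lra\Box\psi\in w$ go through.

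The main obstacle I anticipate is the careful bookkeeping around saturation and the use of the existence lemma to pin down $z_w$: one must verify that the canonical accessibility relation $R^S$, restricted to a classical world $w$, has a unique successor which is $I(S)$-saturated and which contains exactly the unboxed theorems of $w$, so that clause (ii) of the truth definition evaluates $\Box\psi$ at $w$ by consulting a single well-defined nonclassical world whose membership relation already encodes $\Box$. Establishing that $z_w$ is saturated and that the equivalence $\psi\in z_w\Lra\Box\psi\in w$ meshes correctly with the $s$-interpretation at $z_w$ — rather than the mere Boolean bookkeeping — is where the argument really rests, and it is here that Lemma \ref{lem:exipro} and the definition of $V^S:=V^{{\rm I}(S)}$ do the heavy lifting. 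The Boolean and atomic cases, by contrast, are immediate and I would state them only briefly.
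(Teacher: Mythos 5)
Your overall architecture matches the paper's: induction on positive complexity, deferring the nonclassical worlds $z\in Z^S$ to the earlier saturated-set machinery, and concentrating the real work on $\Box\psi$ (and $\neg\Box\psi$) at a classical world $w$, with Lemma \ref{lem:exipro} as the key external input. But there is a genuine gap in how you set up the modal case. You assert that by {\sc functionality} the classical world $w$ has exactly one $R^S$-successor, namely $z_w:=\{\chi\sth\Box\chi\in w\}$. That is not how the canonical model is defined: for $x\in W^S$ the relation $R^S(x,y)$ holds whenever $\{\chi\sth\Box\chi\in x\}\subseteq y$, so $w$ sees \emph{every} ${\rm I}(S)$-saturated superset of $z_w$, of which there are in general many; the canonical frame is not functional at classical worlds, and the truth clause for $\Box\psi$ at $w$ quantifies over all of these successors. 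Consequently the biconditional $\mc{M}^S,w\Vdash_s\Box\psi\Lra\mc{M}^S,z_w\Vdash_s\psi$ does not come for free. The left-to-right direction of the truth lemma is unproblematic (if $\Box\psi\in w$ then $\psi$ belongs to every successor, and the induction hypothesis applies to each), but the converse direction requires producing \emph{some} successor omitting $\psi$ whenever $\Box\psi\notin w$ --- and that is precisely the job of Lemma \ref{lem:exipro}, which you instead cite for the different (and unestablished) claim that $z_w$ ``can be taken saturated.'' The existence lemma does not say that the set $\{\chi\sth\Box\chi\in w\}$ itself is saturated; showing that would need a separate argument through the Connecting Lemma \ref{lem:intlog}, and even granting it, it would not make $z_w$ the unique successor.

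The repair is small but changes the shape of the argument: drop the unique-successor picture, prove left-to-right by the inclusion $z_w\subseteq z$ for every successor $z$ plus the induction hypothesis, and prove right-to-left in contrapositive form by invoking Lemma \ref{lem:exipro} to obtain a saturated $z$ with $R^Swz$ and $\psi\notin z$, whence $\mc{M}^S,z\nVdash_s\psi$ by the induction hypothesis and so $\mc{M}^S,w\nVdash_s\Box\psi$. (The case $\vphi=\neg\Box\psi$ at $w$, which the paper treats explicitly since the induction is on positive complexity, then follows by the maximality of $w$.) With that correction your proof coincides with the paper's.
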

	
	\begin{proof}
	    By induction on the positive complexity of $\vphi$. In the case in which $\vphi$ is $\neg \Box \psi$, and $v\in W^S$, the right-to-left direction is immediate by the definition of truth and $R^S$. The left-to-right direction follows from Lemma \ref{lem:exipro}.   
	\end{proof}

	\begin{proof}[Proof of Proposition \ref{prop:ademoc}]
	    The proof of \ref{ademocii} proceeds essentially unchanged from the previous section, Proposition \ref{prop:nonclade}. For \ref{ademoci}, the soundness direction is obtained by induction on the length of the proof in $S$. For the completeness direction,  we first notice that, for any relevant $S$, the claims
	    \begin{enumeratei}
					\item any $S$-consistent set of sentences is satisfiable in $\mf{F}$ 
					\item if $\;\;\Gamma\Vdash_s^{\mf{F}} \vphi \;\;\text{then}\;\;\Gamma\vdash_S \vphi$.
				\end{enumeratei}
		are equivalent. Now let $X\subseteq \lbox$ be $S$-consistent. It then suffices to find a model $\mc{M}$ in $\mf{F}$ and a $w\in W^S$ such that $\mc{M},w\Vdash \vphi$ for any $\vphi \in X$. We can then simply choose $\mc{M}^S$ and any $w\in W$ such that $X\subseteq w$. 
	\end{proof}
	%Perhaps Lemma needed: show that the canonical model is a member of the relevant class.

	%\textcolor{red}{Ultimately we should write down a completeness result for {\bf unique root} models so we can simply appeal to that one when we prove the arithmetical completeness.}	
	%C:This should be it. 
	Proposition \ref{prop:ademoc} yields a completeness result for the classical, non-congruent modal logics that do not assume the faithfulness axiom \eqref{faithmod}. The axiom states that if a formula $\vphi$ of $\lbox$ receives a classical truth value at a nonclassical world, then $\vphi$ will have the same truth value at all classical worlds that see the nonclassical world. This informal claim is made rigorous in Lemma \ref{lem:faim} below, which relative to mixed, idiosyncratic frames forces the valuation to be faithful in the sense of Definition \ref{dfn:mixval}. Lemma \ref{lem:faim} thus allows to transform Proposition \ref{prop:ademoc} into a completeness result for the modal logics that assume \eqref{faithmod}. The respective modal logics will be complete with respect to the class of faithful models based on mixed, idiosyncratic frames.
	\begin{lem}[Faithful Models]\label{lem:faim}Let $F$ be a mixed, idiosyncratic frame, $V$ a mixed valuation on $F$ and $\mc{M}=(F,V)$ the resulting model. Then $\forall w\in W(\mc{M},w\Vdash_s\Box\vphi\wedge\neg\Box\neg\vphi\rightarrow\vphi)$ iff $V$ is a faithful valuation.
	\end{lem}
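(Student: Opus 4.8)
The plan is to fix a classical world, use {\sc functionality} to single out the unique non-classical world it sees, and reduce the schema \eqref{faithmod} to a transparent condition relating the two valuations; the biconditional then splits into an easy contrapositive and an induction.

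First I would fix $w\in W$ and let $z_w$ be the unique $z\in Z$ with $Rwz$, which exists by {\sc functionality}. Since $V_w$ is two-valued and the connective clauses of Definition \ref{def:nctm} preserve $\{0,1\}$, every formula receives a classical value at $w$ (as already noted after Definition \ref{def:trumim}), so $\Vdash_s$ behaves classically at $w$ and $\rightarrow$ is material implication there. Unwinding the box clause at a classical world together with the negation clause gives $\mc{M},w\Vdash_s\Box\vphi$ iff $|\vphi|^{z_w}_s\in\{1,\both\}$, and $\mc{M},w\Vdash_s\neg\Box\neg\vphi$ iff $|\vphi|^{z_w}_s\in\{1,\neither\}$; hence the antecedent of \eqref{faithmod} holds at $w$ exactly when $|\vphi|^{z_w}_s=1$, while the consequent holds exactly when $|\vphi|^{w}_s=1$. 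Thus the $\vphi$-instance of \eqref{faithmod} holds at $w$ iff $|\vphi|^{z_w}_s=1$ implies $|\vphi|^{w}_s=1$, and, reading this also for $\neg\vphi$, the world $w$ validates every instance iff it has the \emph{preservation property}: for all $\vphi$, if $|\vphi|^{z_w}_s\in\{0,1\}$ then $|\vphi|^{w}_s=|\vphi|^{z_w}_s$.

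For the direction from validity to faithfulness I would argue contrapositively. If $V$ is not faithful there are $w$, $z_w$ and $p$ with $V_{z_w}(p)\in\{0,1\}$ but $V_w(p)\neq V_{z_w}(p)$; taking the instance for $p$ (when $V_{z_w}(p)=1$) or for $\neg p$ (when $V_{z_w}(p)=0$) violates the preservation property at $w$, so some instance of \eqref{faithmod} fails there.

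For the converse I would fix $w$ and prove the preservation property by induction on $\vphi$. The atomic case is exactly faithfulness, negation is immediate, and the case $\Box\psi$ uses {\sc idiosyncracy}: since $z_w$ sees only itself, $|\Box\psi|^{z_w}_s=|\psi|^{z_w}_s$, and the two-valued box clause at $w$ then returns the matching value. The main obstacle is the step for the binary connectives, where I must show that if a conjunction, disjunction, or conditional has a classical value at $z_w$, then some immediate subformula already has a classical value at $z_w$ forcing it, so that the induction hypothesis together with faithfulness transports the value to $w$. For the one-middle-value schemes (${\sf k3}$, ${\sf lp}$, ${\sf ks3}$, ${\sf b3}$, ${\sf f3}$) this is a short case analysis on $\preceq$, on $\curlyeqprec$, or on the $\fcond$-table---e.g.\ in a $\neither$-valued model $\inf_\preceq\{a,b\}=0$ forces $a=0$ or $b=0$, and faithfulness then fixes that operand at $w$. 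I expect the genuinely four-valued (${\sf fde}$) case to be the real difficulty, since there $\neither$ and $\both$ may co-occur at $z_w$ and interact (for instance $\inf_\preceq\{\neither,\both\}=0$), so controlling the value of the compound at $w$ is exactly the point that needs the most care.
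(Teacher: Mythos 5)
Your reduction of the schema at a fixed classical world $w$ to the preservation property, together with the contrapositive argument that validity forces faithfulness via the atomic instances for $p$ and $\neg p$, is exactly the paper's proof of the right-to-left direction: the paper's two reductio cases ($V_z(p)=1,V_w(p)=0$ and $V_z(p)=0,V_w(p)=1$) are your two atomic violations of preservation. For the converse direction the paper gives no argument at all (it is ``left to the reader''), so your induction is genuinely additional content; and for the schemes with a single non-classical value (${\sf k3}$, ${\sf lp}$, ${\sf ks3}$, ${\sf b3}$, ${\sf f3}$) your case analysis is correct, since whenever a compound receives a classical value at $z_w$ the subformulas responsible for that value are themselves classical at $z_w$, and faithfulness plus the induction hypothesis transports everything to $w$.

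Your worry about the ${\sf fde}$ case is not a mere presentational difficulty: that step of the induction cannot be completed, because the ``faithful implies valid'' half of the lemma is false for four-valued models. Take $W=\{w\}$, $Z=\{z\}$ with $Rwz$ and $Rzz$, and set $V_z(p)=\neither$, $V_z(q)=\both$, $V_w(p)=V_w(q)=1$; this valuation is vacuously faithful, since no atom is classical at $z$. Then $\intfde{p\wedge q}{M}{z}=\inf_\preceq\{\neither,\both\}=0$, so for $\vphi=\neg(p\wedge q)$ we have $\intfde{\vphi}{M}{z}=1$ and $\intfde{\neg\vphi}{M}{z}=0$, whence $\mc{M},w\Vdash_{\sf fde}\Box\vphi\wedge\neg\Box\neg\vphi$; but $\intfde{\vphi}{M}{w}=0$, so the $\vphi$-instance of \eqref{faithmod} fails at $w$. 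The phenomenon is precisely the interaction you flagged: a compound can acquire a classical value at $z_w$ from subformulas none of which is classical there, so faithfulness places no constraint on $V_w$ at the relevant atoms. Consequently your induction (and the paper's omitted argument) can only go through if either the lemma is restricted to the single-gap and single-glut schemes you do handle, or the definition of faithfulness is strengthened in the four-valued case so as to constrain $V_w(p)$ also when $V_z(p)\in\{\neither,\both\}$.
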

	\begin{proof}We leave it to the reader to verify that \eqref{faithmod} is true at all classical worlds in faithful models based on mixed, idiosyncratic frames. For the converse direction we assume for reductio that $\forall w\in W(\mc{M},w\Vdash_s\Box\vphi\wedge\neg\Box\neg\vphi\rightarrow\vphi)$ on some non-faithful model based on a mixed, idiosyncratic frame, that is, for some $p\in {\sf Prop}$ and $w\in W$ and $z\in Z$ with $Rwz$: $V_z(p)\in\{0,1\}$ but $V_w(p)\neq V_z(p)$. There are two cases:
	\begin{itemize}
	    \item $V_z(p)=1$ and $V_w(p)=0$. Then $\mc{M},w\Vdash_s\Box p\wedge\neg\Box\neg p$ and $\mc{M},w\nVdash_s p$, that is, $\mc{M},w\nVdash_s\Box p\wedge\neg\Box\neg p\rightarrow p$. Contradiction.
	    \item $V_z(p)=0$ and $V_w(p)=1$. Then $\mc{M},w\Vdash_s\Box\neg p\wedge\neg\Box\neg\neg p$ and $\mc{M},w\nVdash_s\neg p$, that is, $\mc{M},w\nVdash_s\Box\neg p\wedge\neg\Box\neg\neg p\rightarrow\neg p$. Contradiction.
	\end{itemize}\end{proof}
	We can now state the adequacy of the faithful modal logics, which, as we will show in Section \label{secttc}, will serve as the modal logics of the Kripke-Feferman truth theories.
	\begin{cor}[adequacy]\label{cor:ademocfaith}
		%	Let $S\in \{{\sf BM},{\sf M}, {\sf M^n}, {\sf M^b},{\sf M^w}\}$ and $F$ be a mixed, idiosyncratic frame. 
		Let $F=(W,Z,R)$ be a mixed, idiosyncratic frame. Then for any $\Gamma,\Delta\subseteq \lbox$ and $\vphi \in \lbox$, the claims	
						\begin{enumeratei}
							\item\label{ademoci} $\Gamma\Vdash_s^{\mf{F}} \vphi \;\;\text{iff}\;\;\Gamma\vdash_S \vphi$
							\item\label{ademocii} $\Gamma\vDash_s^{\mf{F}} \Delta \;\;\text{iff}\;\;{\rm I}(S) \vdash \Gamma \Ra \Delta$
						\end{enumeratei}
					holds when 
						\begin{itemize-}
							\item $\mf{F}$ is the class of mixed, idiosyncratic, faithful models and $(s,S)$ are $(${\sf fde}, {\sf BM}$)$;
							\item $\mf{F}$ is the class of mixed, idiosyncratic, symmetric models and $(s,S)$ are $({\sf ks3}, {\sf M})$;
							%, and for each such model, $s$ is either ${\sf ks3}$;
						%	\item $\mf{F}$ is the class of mixed, idiosyncratic, faithful models and $(s,S)$ are $({\sf fde}, {\sf BM})$
						%	\item $\mf{F}$ is the class of mixed, idiosyncratic, faithful, symmetric models, $S$ is ${\sf M}$ and for each such model, $s$ is either ${\sf k3}$ or ${\sf lp}$
							\item $\mf{F}$ is the class of mixed, idiosyncratic, consistent models and $(s,S)$ are either $({\sf k3},{\sf M^n})$, $({\sf b3},{\sf M^{w}})$, or $({\sf f3},{\sf M^{f}})$;
							\item $\mf{F}$ is the class of mixed, idiosyncratic, complete models and $(s,S)$ is $({\sf lp},{\sf M^b})$
						\end{itemize-}
				%
%				\item 
%					With $\mf{F}$ the class of mixed, faithful models on $F$, and for any $\Gamma\subseteq \lbox$ and $\vphi \in \lbox$:
%						\[
%							\Gamma\Vdash_{\mf{F}} \vphi \;\;\text{iff}\;\;S\vdash \vphi.
%						\]
%			\end{enumeratei}
		\end{cor}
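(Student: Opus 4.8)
The plan is to reduce Corollary~\ref{cor:ademocfaith} to the already-established Proposition~\ref{prop:ademoc} by exploiting Lemma~\ref{lem:faim}, which characterizes exactly when the axiom \eqref{faithmod} is valid at classical worlds. First I would observe that each faithful logic in the corollary is obtained from its counterpart in Proposition~\ref{prop:ademoc} by adding precisely the axiom \eqref{faithmod}: for instance $\mathsf{BM}=\mathsf{BM}^-+\eqref{faithmod}$, $\mathsf{M}=\mathsf{M}^-+\eqref{faithmod}$, $\mathsf{M^n}=(\mathsf{BM^-+D})+\eqref{faithmod}$, and similarly for the remaining three pairs. Correspondingly, on the semantic side, the only change is that the class of models $\mf{F}$ is cut down to its \emph{faithful} members. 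So the entire task is to show that adding \eqref{faithmod} to the axioms corresponds exactly to restricting to faithful models, while leaving clause \ref{ademocii} — which concerns only nonclassical worlds and the inner logic $\mathrm{I}(S)$ — completely untouched.

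The argument for clause \ref{ademocii} is the first and easiest step: since this clause only quantifies over nonclassical worlds $z\in Z$ and the inner logic $\mathrm{I}(S)$ is unaffected by \eqref{faithmod} (which is an axiom about classical worlds only, and indeed Lemma~\ref{lem:intlog} was noted to go through without \eqref{faithmod}), the proof is verbatim the same as in Proposition~\ref{prop:ademoc}, so I would simply remark that \ref{ademocii} ``proceeds exactly as before.'' The real content is clause \ref{ademoci}. For soundness, I would argue that every theorem of $S$ (now including \eqref{faithmod}) is valid at every classical world of every faithful model in $\mf{F}$: the axioms and rules inherited from $\mathsf{BM}^-$ (resp.\ $\mathsf{M}^-$, etc.) are sound by Proposition~\ref{prop:ademoc}, and the new axiom \eqref{faithmod} is sound on faithful models precisely by the forward direction of Lemma~\ref{lem:faim}.

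For completeness, the strategy is to take an $S$-consistent set $X\subseteq\lbox$ and produce a \emph{faithful} model satisfying it at some classical world. Here I would reuse the canonical model construction from the proof of Proposition~\ref{prop:ademoc} essentially unchanged: build $\mc{M}^S=(W^S,Z^S,R^S,V^S)$ with $W^S$ the maximally $S$-consistent sets and $Z^S$ the $\mathrm{I}(S)$-saturated sets, and pick $w\in W^S$ with $X\subseteq w$ (the Existence Lemma~\ref{lem:exipro} and Truth Lemma then give $\mc{M}^S,w\Vdash_s\vphi$ for all $\vphi\in X$, exactly as before). The one genuinely new obligation — and what I expect to be the main (though not deep) obstacle — is verifying that this canonical model is actually \emph{faithful}, so that it lies in the restricted class $\mf{F}$. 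This is where \eqref{faithmod}, now a theorem of $S$, earns its keep: since each $w\in W^S$ contains all instances of \eqref{faithmod}, the Truth Lemma gives $\mc{M}^S,w\Vdash_s\Box\vphi\wedge\neg\Box\neg\vphi\rightarrow\vphi$ for all $w\in W^S$, and then the reverse (reductio) direction of Lemma~\ref{lem:faim} forces $V^S$ to be faithful. Thus the canonical model already belongs to the faithful class, completeness follows, and the only care required is to confirm that the added faithfulness constraint does not disrupt the Existence and Truth Lemmas — which it does not, since those lemmas were proved for the base logics and the faithfulness of $V^S$ is an extra property read off after the fact rather than imposed during the construction.
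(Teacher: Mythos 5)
Your proposal is correct and matches the paper's intended argument: the corollary is obtained from Proposition~\ref{prop:ademoc} by using Lemma~\ref{lem:faim} in both directions --- the forward direction for soundness of \eqref{faithmod} on faithful models, and the converse direction (applied to the canonical model, whose classical worlds all validate \eqref{faithmod} via the Truth Lemma) to conclude that the canonical valuation is faithful, so completeness transfers. Your observation that clause~(ii) and the Existence/Truth Lemmas are untouched is exactly the point the paper relies on implicitly.
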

When proving the truth-theoretical completeness of these modal logics, it will be useful to restrict our attention to models based on unique root models. 
	
	\begin{cor}[single-rooted frames]\label{cor:srprop}
	The adequacy results of Propositions \ref{prop:ademoc} and \ref{cor:ademocfaith} also hold for models based on single-rooted, mixed, idiosyncratic frames. 
% 		Let $\mf{F}$ be a class of models based on \emph{unique root, mixed, idiosyncratic} frames. Then for $S\in \{{\sf BM},{\sf M}, {\sf M^n}, {\sf M^b},{\sf M^w}\}$, $v\in W^S\cup R^S$, and any $\Gamma\subseteq \lbox$, $\vphi\in \lbox$, 
% 		\[
% 			\Gamma\Vdash_{\mf{F}}^s \vphi \;\;\text{iff}\;\;S\vdash \vphi.
% 		\]
	\end{cor}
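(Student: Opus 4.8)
The plan is to show that passing from an arbitrary mixed, idiosyncratic model to the submodel generated by a single classical world leaves the relevant truth values untouched, so that the canonical-model witnesses produced in the proofs of Proposition \ref{prop:ademoc} and Corollary \ref{cor:ademocfaith} can always be replaced by single-rooted ones. First I would note that the soundness (right-to-left) directions of \ref{ademoci} and \ref{ademocii} are inherited for free: since every single-rooted, mixed, idiosyncratic (symmetric/consistent/complete/faithful) model is in particular a model of the corresponding unrestricted class, shrinking the class of models can only weaken the consequence relations $\Vdash_s^{\mf F}$ and $\vDash_s^{\mf F}$, so anything valid over the full class stays valid over the single-rooted subclass. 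The content of the corollary therefore lies entirely in the completeness (left-to-right) directions, where the hypothesis becomes weaker and hence the implication harder to establish.

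The key ingredient is a locality lemma. Suppose $\mc M=(W,Z,R,V)$ is mixed and idiosyncratic, let $w_0\in W$, and let $z_0\in Z$ be the unique world with $Rw_0z_0$ (which exists by {\sc functionality}). Let $\mc M'$ be the restriction of $\mc M$ to $W'=\{w_0\}$, $Z'=\{z_0\}$, with $R'=\{\lan w_0,z_0\ran,\lan z_0,z_0\ran\}$ and $V'=V\uhr(\{w_0,z_0\}\times{\rm Prop})$. Then $\intfde{\vphi}{M}{u}=\intfde{\vphi}{M'}{u}$ for every $\vphi\in\lbox$ and every $u\in\{w_0,z_0\}$. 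I would prove this by induction on the positive complexity of $\vphi$, the only nontrivial case being $\Box\psi$: at the nonclassical point $z_0$ the infimum defining $\intfde{\Box\psi}{M}{z_0}$ ranges, by {\sc idiosyncracy}, over the single successor $z_0$, hence equals $\intfde{\psi}{M}{z_0}$; at the classical point $w_0$ the clause of Definition \ref{def:trumim}(ii) inspects, by {\sc functionality}, only whether $\mc M,z_0\Vdash_s\psi$. In both cases the value depends solely on the value of $\psi$ at $w_0$ and $z_0$, to which the induction hypothesis applies. I would also record that $\mc M'$ is a single-rooted, mixed, idiosyncratic model, and that $V'$, being a restriction of $V$, is of the same kind (consistent, complete, symmetric, or faithful) as $V$.

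With the locality lemma in place I would re-run the two completeness arguments. For clause \ref{ademoci}: given an $S$-consistent $X\seq\lbox$, the proof of Proposition \ref{prop:ademoc} yields a classical world $w_0\in W^S$ of the canonical model with $X\seq w_0$ and $\mc M^S,w_0\Vdash_s\vphi$ for all $\vphi\in X$; replacing $\mc M^S$ by the submodel generated by $w_0$ and applying the locality lemma produces a single-rooted model at whose root every member of $X$ holds, which is exactly the satisfiability statement equivalent to completeness. For clause \ref{ademocii}: if ${\rm I}(S)\nvdash\Gamma\Ra\Delta$, Proposition \ref{prop:adenai} supplies an idiosyncratic model refuting the sequent at some nonclassical world $z_0$; since an idiosyncratic frame is a disjoint union of reflexive points, I restrict to the single point $z_0$ (the $z_0$-clause of the locality lemma keeps its truth values fixed) and then adjoin one classical world $w$ with $Rwz_0$, defining $V_w$ by an arbitrary classical valuation. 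The resulting model is single-rooted, mixed, idiosyncratic and of the required valuation type, and since the truth values at $z_0$ never reference $w$, the sequent still fails there.

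The one point that requires care — and which I would flag as the main obstacle — is the bookkeeping for the valuation types in the clause \ref{ademocii} construction, specifically the faithful logics of Corollary \ref{cor:ademocfaith}: there the adjoined valuation must be made faithful, which I would arrange by setting $V_w(p)=V_{z_0}(p)$ whenever $V_{z_0}(p)\in\{0,1\}$ and $V_w(p)=0$ otherwise, and then checking against Definition \ref{dfn:mixval} that this $V$ is indeed faithful. Everything else reduces to the locality lemma, whose inductive step is immediate from {\sc functionality} and {\sc idiosyncracy}.
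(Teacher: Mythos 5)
Your proposal is correct and follows essentially the same route as the paper, whose entire proof is the observation that the generated subframe of the canonical frame is a single-rooted, mixed, idiosyncratic frame; your locality lemma is just the standard generated-submodel argument made explicit, using {\sc functionality} and {\sc idiosyncracy} exactly as the paper intends. Your additional care over clause (ii) (restricting to a single nonclassical point, adjoining a classical root, and arranging faithfulness of the adjoined valuation) fills in details the paper leaves implicit, and is sound because truth at a nonclassical world never depends on the valuation at the classical worlds that see it.
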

	%\begin{proof}The mixed idiosyncratic frames with unique roots can be generated from mixed idosyncratic frames. Let $F$ be in an mixed idiosyncratic and $w\in W$. Let $F_w$ be the subframe generated from $w$. Then $F_w$ is a mixed idiosyncratic frame with the unique root $w$ with the desired properties.\end{proof}
	\begin{proof}The generated subframe of the canonical frame is a single-rooted mixed, idiosyncratic frame.\end{proof}
	
	%%%%%%%%%%%%%%%%%%%%%%%%%%%%%%%%%%%%%%%%%%%%Kripke-Feferman%%%%%%%%%%%%%%%%%%%%%%%%%%%%%%%%%%%%%%%%%%%%%%%%%%%%%%%%%%%%%%%%

	\section{Kripke-Feferman truth}\label{sec:trupre}

	In this section we introduce the relevant truth-theoretic background. We start with the basics of fixed-point semantics, then we introduce the base theory for our axiomatic systems of truth, and finally we define the collections of axioms of these systems.

	%%%%%%%%%%%%%PEANO%%%%%%%%%%%%%
	\subsection{Peano arithmetic}
	
	We start with an arithmetical language $\lnat$ that includes the standard signature $\{0,{\rm S},+,\times\}$, and extend it with a unary truth predicate $\T$. We assume a canonical, monotone G\"odel numbering for $\lt$. For $e$ an $\lt$-expression, we write $\#e$ for its G\"odel code and $\corn{e}$ for the $\lnat$-term representing $\corn{e}$. $\lt$ features finitely many function symbols for suitable primitive recursive functions for syntactic operations on (codes of) expressions, such as:\footnote{In addition, we assume a function symbols for the proper subtraction function to avoid certain unintended propoerties of the Weak Kleene schema defined below \cite{spe17,cada91}. }
	\begin{align*}
		& \text{\sc Operation} &&\text{\sc Function symbol}\\
		& \#e_1,\#e_2\mapsto \#(e_1=e_2) && {\rm eq}\\
		& \#t\mapsto \text{the value of the closed term $\#t$}&&{\rm val}\\
		& \#e\mapsto \#\neg e &&{\rm ng}\\
		& \#e_1,\#e_2\mapsto \#(e_1\land e_2) &&{\rm and}\\
		& \#e_1,\#e_2\mapsto \#(e_1\fcond e_2) &&{\rm fc}\\
		%&\#e_1,\#e_2 \mapsto\#(e_1\vee e_2)&&{\rm or}\\
		& \#v,\#\vphi\mapsto \#(\forall v \vphi)&&{\rm all}\\
		%& \#v,\#\vphi\mapsto \#(\exists v \vphi) &&{\rm ex}\\
		& \#t,\#\vphi(v)\mapsto \#\vphi(t/v)&&{\rm sub}\\
		& n\mapsto \#\ovl{n} && {\rm num}
	\end{align*}

	We assume in particular that the evaluation function is defined for the finitely many primitive recursive functions other than itself -- in particular, its defining equations are part of our base theory. In this way it remains primitive recursive.

	\begin{de}[Peano Arithmetic]
		\emph{Peano arithmetic} is the first-order system in $\lnat$ whose axioms are:
		\begin{itemize-}
			\item $\forall x(0\neq {\rm S}(x))$
			\item $\forall x\forall y({\rm S}(x)={\rm S}(y)\ra x=y)$
			\item the recursive equations for $+,\times$ and the finitely many additional primitive recursive function symbols;
			\item the axiom schema of induction:
				\[
					\tag{${\sf IND}(\lnat)$} \vphi(0)\land \forall x(\vphi(x)\ra\vphi({\rm S}x))\ra \forall x\vphi(x)
				\]
				for all formulas $\vphi(v)$ of $\lnat$. 
		\end{itemize-}
	\end{de}

	\begin{de}
		The system ${\sf PAT}$ in $\lnat\cup\{\T\}$ extends the basic axioms of ${\sf PA}$ with all instances of induction in $\lt$. 
	\end{de}

	\subsection{Fixed Point Semantics} 
	
	Kripke-Feferman truth can be seen as axiomatizing a collection of inductive constructions of the sets of $S$-true sentences of $\lt$, where $S$ is one of the subclassical logics considered above \cite{mawo84,kri75,vis89,fef08}. Let $\mc{M}\vDash {\sf PA}$ and $\ltmod{M}$ be $\lt$ expanded with constants $a,b,c,
		\ldots$ for all elements of its domain $M$.\footnote{The language expansion is not needed in the case of the standard model $\nat$, which contains names for all natural numbers.} Let ${\rm True}_0$ the ${\sf PA}$-definable set of true $\mc{L}_{\nat}$-equations, and ${\rm False}_0$ the {\sf PA}-definable set of false $\mc{L}_{\nat}$-equations.  
		
		We define operators on sets $S\subseteq M$ satisfying
		\begin{align*}
		\tag{\sc reg} (\mc{M},S)\vDash {\rm Sent}_{\lt}({\rm all}(v,a))&\land {\rm Cterm}_{\nat}(b)\land {\rm Cterm}_{\nat}(c)\land {\rm val}(b)={\rm val}(c)\\
		\notag&\ra \big(\T{\rm sub}(a,v,b)\lra \T{\rm sub}(a,v,c)\big)
		\end{align*}
		Sets satisfying {\sc reg} are called \emph{regular} \cite{can89}: they simply state that the truth predicate is sensitive to substitutions of identicals.
	
	\begin{de}[Kripke jumps]\label{def:kjump}
		Given some $\mc{M}\vDash {\rm PA}$, we are interested in two main operators on regular $S\subseteq M$. 
		\begin{enumeratei}
			\item The \emph{Strong-Kleene jump} $\Phi\colon \mc{P}(M)\to\mc{P}(M)$ is such that $a\in \Phi(S)$ if and only if
			\begin{align*}
			%a\in \Phi(S) \;\;\text{iff}\;\;
										 \notag&\mc{M}\vDash{\rm Sent}_{\lt}(a),\;\text{and}\\
										&\Big(\mc{M}\vDash  {\rm True}_0(a),\;\text{or}\\
										&\mc{M}\vDash a={\rm ng}(b)\land {\rm False}_0(b),;\text{or}\\
										&(\mc{M},S)\vDash  {\rm Cterm}(b)\land a={\rm sub}(\corn{\T v},\corn{v},b)\land \T{\rm val}(b),\,\text{or}\\
										&(\mc{M},S)\vDash {\rm Cterm}(b)\land a={\rm sub}(\corn{\neg\T v},\corn{v},b)\land \T{\rm ng}({\rm val}(b)),\,\text{or}\\
										&(\mc{M},S)\vDash   a={\rm ng}({\rm ng}(b))\,\land \T(b),\;\text{or}\\
										&(\mc{M},S)\vDash  a={\rm and}(b,c)\land \T b\land \T c,\;\text{or}\\
										&(\mc{M},S)\vDash  a={\rm ng}({\rm and}(b,c))\land \T{\rm ng}(b) \vee \T{\rm ng}(c),\;\text{or}\\
										&(\mc{M},S)\vDash  a={\rm all}(u,b)\land \forall x({\rm Cterm}(x)\ra \T{\rm sub}(b,u,x)), \;\text{or}\\
										&(\mc{M},S)\vDash  a={\rm ng}({\rm all}(u,b))\land \exists x({\rm Cterm}(x)\land \T{\rm sub}({\rm ng}(b),u,x))\Big).
		\end{align*}
		\item Let ${\rm D}(x):\lra (\T x\vee \T{\rm ng}(x))$. The \emph{Weak-Kleene jump} $\Psi\colon \mc{P}(M)\to\mc{P}(M)$ replaces in the definition of $\Phi$ the clauses for the negated conjunction and quantifiers with:
			\begin{align*}
			\mc{M}\vDash&{\rm Sent}_{\lt}(a),\;\text{and}\\
									%
%										\Big(\mc{M}\vDash& {\rm True}_0(a),\;\text{or}\\
%										\mc{M}\vDash& a={\rm ng}(b)\land {\rm False}_0(b),;\text{or}\\
%										(\mc{M},S)\vDash& {\rm Cterm}(b)\land a={\rm sub}(\corn{\T v},\corn{v},b)\land \T{\rm val}(b),\,\text{or}\\
%										(\mc{M},S)\vDash &{\rm Cterm}(b)\land a={\rm sub}(\corn{\neg\T v},\corn{v},b)\land \T{\rm ng}({\rm val}(b)),\,\text{or}\\
%									(\mc{M},S)\vDash& a={\rm ng}({\rm ng}(b))\,\land \T(b),\;\text{or}\\
%										(\mc{M},S)\vDash& a={\rm and}(b,c)\land \T b\land \T c,\;\text{or}\\
										(\mc{M},S)\vDash\;& a={\rm ng}({\rm and}(b,c))\land (\T{\rm ng}(b) \vee \T{\rm ng}(c))\land {\rm D}(a)\land {\rm D}(b)\;\text{or}\\
										(\mc{M},S)\vDash\;& a={\rm all}(u,b)\land \forall x{\rm D}({\rm sub}(b,u,x))\land\\& \forall x({\rm Cterm}(x)\ra \T{\rm sub}(b,u,x)), \;\text{or}\\
										(\mc{M},S)\vDash\;& a={\rm ng}({\rm all}(u,b))\land \forall x{\rm D}({\rm sub}(b,u,x))\land\\& \exists x({\rm Cterm}(x)\land \T{\rm sub}({\rm ng}(b),u,x))\Big).
		\end{align*}
		\item The \emph{Aczel-Feferman} jump $\Xi \colon \mc{P}(M)\to \mc{P}(M)$ is then defined for formulas of $\lt^\fcond$ and follows the blueprint of the definition of $\Psi$ modulo replacing ${\rm Sent}_{\lt}$ with ${\rm Sent}_{\lt^\fcond}$ and the addition, to the second main conjunct, of the disjuncts:
		\begin{align*}
			&(\mc{M},S)\vDash a= {\rm fc}(b,c)\land \T{\rm ng}(b)\vee (\T b\land\T c)\\
			& (\mc{M},S)\vDash a= {\rm ng}({\rm fc}(b,c))\land \T b\land \T{\rm ng}(c)
		\end{align*}
		\end{enumeratei}
	\end{de}
	
	The subsets of ${\rm Sent}^{\mc{M}}_{\lt}:=\{a\in M\sth \mc{M}\vDash {\rm Sent}_{\lt}(a)\} $ and ${\rm Sent}^{\mc{M}}_{\lt^\fcond}:=\{a\in M\sth \mc{M}\vDash {\rm Sent}_{\lt^\fcond}(a)\} $ satisfying {\sc reg} form a complete lattice and the operator $\Phi(\cdot)$ is monotone, therefore, by the Tarski-Knaster theorem \cite{tar55}:
	\begin{lem}
		The operators $\Phi,\Psi,\Xi$ give rise to complete lattice of fixed points with minimal and maximal elements the sets obtained by iterating the operators on $\varnothing$ and on the set of sentences of the relevant language respectively. 
%			\begin{align*}
%				&\mf{m}_\Phi:= \bigcup_{\alpha\in {\sf ON}}\Phi^\alpha(\varnothing), && \mf{M}_\Phi:=\bigcap_{\alpha\in {\sf ON}}\Phi^\alpha({\rm Sent}^{\mc{M}}_{\lt})
%			\end{align*}
	\end{lem}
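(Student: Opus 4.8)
The plan is to verify the three hypotheses of the Tarski--Knaster theorem for each of the three operators and then read off the characterization of the extremal fixed points from the standard transfinite-iteration description of the least and greatest fixed points of a monotone map.

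First I would make the complete-lattice structure explicit. Fix $\mc{M}\vDash{\sf PA}$ and let $L$ be the collection of regular subsets of ${\rm Sent}^{\mc{M}}_{\lt}$ (resp.\ of ${\rm Sent}^{\mc{M}}_{\lt^\fcond}$ in the case of $\Xi$), ordered by $\seq$. Since {\sc reg} has the form ``if ${\rm sub}(a,v,b)\in S$ and ${\rm val}(b)={\rm val}(c)$ then ${\rm sub}(a,v,c)\in S$'', it is preserved under arbitrary intersections and arbitrary unions: if ${\rm sub}(a,v,b)$ lies in $\bigcap_i S_i$ (resp.\ $\bigcup_i S_i$), then ${\rm sub}(a,v,c)$ is forced into every $S_i$ (resp.\ into the same $S_i$ witnessing membership), hence into the intersection (resp.\ union). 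Consequently meets and joins in $L$ are just set-theoretic intersection and union, the bottom element is $\varnothing$ and the top element is the full set of sentences of the relevant language, and $L$ is a complete lattice.

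Next I would check that each operator is a monotone self-map of $L$. Monotonicity is immediate from the syntactic shape of the jumps: in every disjunct of the definitions of $\Phi$, $\Psi$ and $\Xi$ the predicate $\T$ --- and, in the Weak-Kleene and Aczel--Feferman clauses, the auxiliary ${\rm D}(x)\equiv\T x\vee\T{\rm ng}(x)$ --- occurs only positively, while the remaining conjuncts (${\rm True}_0$, ${\rm False}_0$, ${\rm Sent}_{\lt}$, ${\rm Cterm}$ and the purely syntactic identities) are $\T$-free; hence enlarging $S$ can only enlarge the value of the operator. That the operators also \emph{preserve} regularity is the one point requiring genuine, if routine, verification: one checks clause by clause that if ${\rm sub}(a,v,b)$ enters $\Phi(S)$ (resp.\ $\Psi(S)$, $\Xi(S)$) and ${\rm val}(b)={\rm val}(c)$, then ${\rm sub}(a,v,c)$ enters as well. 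This uses precisely the regularity of $S$ in the atomic clauses $a={\rm sub}(\corn{\T v},\corn{v},b)\land\T{\rm val}(b)$ and their negated and conditional analogues, together with the fact that ${\rm val}$ assigns $b$ and $c$ the same value. Thus $\Phi,\Psi,\Xi\colon L\to L$.

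With $L$ a complete lattice and the three operators monotone endomaps of $L$, the Tarski--Knaster theorem \cite{tar55} yields that the fixed points of each operator form a complete lattice. Finally I would identify the extremal points. By monotonicity the transfinite iteration $\varnothing\seq\Phi(\varnothing)\seq\Phi^2(\varnothing)\seq\cdots$, with unions taken at limit stages, is increasing and, since $L$ is a set, stabilizes at some $X$ with $\Phi(X)=X$; a transfinite induction shows $\Phi^\alpha(\varnothing)\seq P$ for every prefixed point $P$ (those with $\Phi(P)\seq P$), so $X$ is the least fixed point. Dually, iterating downward from the full set of sentences stabilizes at the greatest fixed point, and the same argument applies verbatim to $\Psi$ and $\Xi$. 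I expect the preservation-of-regularity check to be the only real obstacle; everything else is a direct instance of the Knaster--Tarski machinery.
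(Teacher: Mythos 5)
Your proposal is correct and follows exactly the route the paper takes: the paper's entire argument is the sentence preceding the lemma, namely that the regular subsets of ${\rm Sent}^{\mc{M}}_{\lt}$ (resp.\ ${\rm Sent}^{\mc{M}}_{\lt^\fcond}$) form a complete lattice and the jumps are monotone, so Tarski--Knaster applies; you have simply filled in the details (closure of regularity under unions and intersections, positivity of $\T$ in every clause, preservation of regularity, and the transfinite-iteration description of the extremal fixed points). Nothing in your write-up diverges from or adds a gap to the paper's argument.
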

	
	\noindent In what follows, when referring to a fixed point, we will always refer to a fixed point of $\Phi,\Psi,\Xi$. Any fixed point $X$ will have the property that: $\vphi\in X$ iff $\T\corn{\vphi}\in X$ for any sentence $\vphi$ of $\lt$, where the bi-conditional is necessarily metatheoretic. This property approximates the na\"ive truth schema and, since $(\T\corn{\vphi}\lra \vphi)\in X$ for $\vphi\in \lnat$, it improves on the standard Tarskian solutions \cite{tar35} in a substantial way. This partially explains why this semantic construction is the basis of several contemporary approaches to the Liar paradox. 
	
	Different such approaches often diverge on which class of fixed points they accept. 
	\begin{de}
		A fixed point $X$ on $\mc{M}$ is called:
			\begin{itemize}
				\item \emph{consistent}, if there is no $a\in {\rm Sent}^{\mc{M}}_{\lt}$ (${\rm Sent}^{\mc{M}}_{\lt^\fcond}$) such that $a\in X$ and ${\rm ng}^{\mc{M}}(a)\in X$;
				\item  \emph{complete}, if for all $a\in {\rm Sent}^{\mc{M}}_{\lt}$ (${\rm Sent}^{\mc{M}}_{\lt^\fcond}$), either $a\in X$ or ${\rm ng}^{\mc{M}}(a)\in X$.
			\end{itemize}
	\end{de}
	
	\noindent It can be easily verified that the least fixed points are consistent, and the greatest ones are complete. It follows from the definitions that there will also be fixed points $X\in \mf{L}$ that are neither consistent nor complete. 
	
	By the diagonal Lemma, for any $\mc{M}\vDash {\rm PA}$, fixed point $X\subseteq M$, and any $i\in \omega$, we can find sentences $\tau(\ovl{i},\vec{x})$ (truth-teller sentences)  such that
	\begin{equation}
		\mc{M}\vDash \T\corn{\tau(\ovl{i},{x_1,\ldots,x_n})}\lra \tau(\ovl{i},{x_1,\ldots,x_n}).
	\end{equation}
	In the following we use $\tau_i(\vec{x})$ as short for $\tau(\ovl{i},{x_1,\ldots,x_n})$. The analysis of paradoxicality in \cite{kri75} revealed that truth-teller sentences are free to assume different truth-values in different fixed-points of $\Phi,\Psi,\Xi$.\footnote{The case of Weak Kleene is as usual a bit more complex: if one lacks the means for direct self-reference such as a primitive substitution function, one may not be free in assigning arbitrary truth values to truth-tellers. If for instance truth-tellers are obtained by means of existential quantification, the existence of a non determined instance would render the quantification non determined \cite{cada91}. In our case we assume the means for direct self-reference and sidestep these subtle issues. }  
	
	\begin{lem}\label{lem:ttfree}
		Let $\mc{M}\vDash{\sf PA}$. For  $i\in\omega$ and any $Y,Z\subseteq\omega$ there is a fixed point $X\subseteq {\rm Sent}^{\mc{M}}_{\lt}$ (${\rm Sent}^{\mc{M}}_{\lt^\fcond}$) such that
		\begin{align*}
		\tag{i}&\#\tau_i\in X \;\;\text{iff}\;\;i\in Y;\\
		\tag{ii}&\#\neg\tau_i\in X \;\;\text{iff}\;\;i\in Z.
		\end{align*}
		If $Y\cap Z=\emptyset$ $[Y\cup Z=\omega]$ then $X$ can be chosen to be consistent $[$complete$]$.
		\end{lem}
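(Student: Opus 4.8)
The plan is to construct, for a given $i\in\omega$ and $Y,Z\subseteq\omega$, a fixed point $X$ realising the prescribed truth-teller pattern by a careful \emph{seeding} of a monotone iteration. The key observation is that the truth-teller sentences $\tau_j$ are, by the fixed-point property of their construction, entirely \emph{undetermined} by the Kripke jump applied to $\varnothing$: since $\mc{M}\vDash \T\corn{\tau_j}\lra \tau_j$, neither $\#\tau_j$ nor $\#\neg\tau_j$ is forced into any fixed point by the positive inductive clauses unless it is seeded there. Concretely, I would first fix, for each $j\in\omega$, a set $S_0^{(j)}\subseteq\{\#\tau_j,\#\neg\tau_j\}$ recording the desired membership: put $\#\tau_j\in S_0$ iff $j\in Y$, and $\#\neg\tau_j\in S_0$ iff $j\in Z$. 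Let $S_0$ be the union of these over all $j$, together with ${\rm True}_0$ (and its negated-falsehood counterpart) to keep the seed regular.

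The main step is to verify that $S_0$ is a \emph{consistent} partial interpretation in the sense that it is a subset of some fixed point and that the truth-tellers behave as independent ``atoms'' under the jump $\Phi$ (respectively $\Psi$, $\Xi$). First I would check that $S_0$ is regular and that iterating the relevant jump operator from $S_0$ leaves the truth-teller values untouched — that is, $\#\tau_j\in \Phi(S)\lra \#\tau_j\in S$ and likewise for $\#\neg\tau_j$, for any regular $S\supseteq S_0$ whose truth-teller part agrees with $S_0$. This is where the self-referential structure does the work: because $\tau_j$ is governed only by the clause $\T{\rm val}(b)$ applied to the term coding $\tau_j$ itself, the jump neither adds nor removes $\#\tau_j$ beyond what the seed already dictates. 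Then, using monotonicity of $\Phi$ and the Tarski–Knaster theorem as invoked in the preceding lemma, I would take $X$ to be the least fixed point \emph{above} $S_0$, i.e.\ the closure of $S_0$ under the jump; this $X$ exists because $S_0$ is a sound seed (every element is genuinely forced to be true given the seeded truth-tellers), and by the invariance just established, $X$ agrees with $S_0$ exactly on the truth-teller codes, giving (i) and (ii).

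For the consistency and completeness clauses, I would argue as follows. If $Y\cap Z=\varnothing$, then $S_0$ never contains both $\#\tau_j$ and $\#\neg\tau_j$; since the jump preserves consistency when seeded consistently, the least fixed point $X$ above $S_0$ is consistent. Dually, if $Y\cup Z=\omega$, then for every $j$ at least one of $\#\tau_j,\#\neg\tau_j$ is seeded, and taking the \emph{greatest} fixed point below the relevant complete fixed point while respecting the seed yields a complete $X$ realising the pattern; here I would lean on the fact, noted just above the lemma, that least fixed points are consistent and greatest ones complete, so the desired $X$ can be extracted as an appropriate fixed point in the lattice $\mf{L}$ compatible with $S_0$.

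The hard part will be making precise the claim that the truth-teller codes are genuinely \emph{free} under the jump, i.e.\ that seeding them does not cascade into forced values for other sentences that would conflict with the seed, and that the construction is uniform in $j$ across infinitely many truth-tellers simultaneously. One must ensure the seed $S_0$ is \emph{sound} (every seeded sentence is consistent with being true in some fixed point) and \emph{independent} (seeding $\tau_j$ does not affect $\tau_k$ for $k\neq j$); the footnote in the excerpt flags that Weak Kleene can be delicate here, but the assumption of direct self-reference via a primitive substitution function is exactly what guarantees each $\tau_j$ is an isolated parameter, so I would invoke that assumption explicitly to close the gap. The remaining verifications — regularity of $S_0$, monotonicity, and preservation of consistency/completeness — are routine given the Tarski–Knaster machinery already established.
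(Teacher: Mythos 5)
Your proposal is essentially the standard seeding argument that the paper implicitly relies on (the lemma is stated without proof, attributed to Kripke's analysis of paradoxicality): seed $S_0$ with the prescribed truth-teller codes, observe that because each $\tau_j$ is \emph{literally} an atomic truth ascription $\T t_j$ with ${\rm val}(t_j)=\#\tau_j$ the jump satisfies $\#\tau_j\in\Phi(S)\lra\#\tau_j\in S$ and $\#\neg\tau_j\in\Phi(S)\lra\#\neg\tau_j\in S$, so that $S_0\subseteq\Phi(S_0)$ and the upward iteration yields a fixed point agreeing with $S_0$ on all truth-teller codes; this is correct, including your appeal to direct self-reference to handle the Weak Kleene determinacy clauses. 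The one step to tighten is the completeness clause: ``the greatest fixed point below the relevant complete fixed point'' is not the right object (the least fixed point also lies below every complete one), so you should instead iterate the jump \emph{downward} from the complete, seed-compatible set $C_0:={\rm Sent}^{\mc{M}}_{\lt}\setminus(\{\#\tau_j\sth j\notin Y\}\cup\{\#\neg\tau_j\sth j\notin Z\})$, using that $\Phi(C_0)\subseteq C_0$ by the same invariance, that $\Phi$ preserves completeness, and that a decreasing intersection of complete sets is complete.
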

	
	\subsection{Axioms and Rules}
	 In this paper Kripke-Feferman theories of truth are  extensions of {\rm PA} by a finite collection of axioms for the truth predicate, and, possibly, additional instances of induction in $\lt$. The truth axioms are required to be sound with respect to the fixed-point semantics just introduced and to have the additional feature that, for $\mc{M}\vDash {\rm PA}$ and $A$ a the conjunction of such axioms:
	\begin{center}
	 	\text{$X\subseteq M$ is a fixed point iff $(\mc{M},X)\vDash {\rm PA}+A$}.\footnote{We notice that this criterion for defining Kripke-Feferman systems of truth is more selective than the $\nat$-categoricity criterion from \cite{fial15}. In fact, $\nat$-categoricity criterion would consider a schematic version of the theories considered below -- i.e. where the compositional clauses are given in schematic and not universally quantified form --, or disquotational theories in the style of ${\sf PUTB}$ (see \cite{hal14}) in a negation-free language, as axiomatization of suitable Kripkean fixed points. 
	
	Our truth-theoretic completeness results clearly extends to schematic, compositional versions of Kripke-Feferman systems, and it should easily extend to suitable disquotational systems.}
	\end{center}
		
	 We call ${\sf KF}$ the most basic system, whose truth predicate does not rule out interpretation in which the truth predicate is both partial and inconsistent.
	\begin{de}[${\sf KF}$]
		$\kf$ extends ${\sf PAT}$ with the axioms:
		\begin{align}
			\tag{$\kf1$}& \forall x,y\big({\rm Cterm}(x)\land {\rm Cterm}(y)\ra (\T{\rm eq}(x,y)\lra {\rm val}(x)={\rm val}(y))\big)\\
			\tag{$\kf2$}& \forall x,y\big({\rm Cterm}(x)\land {\rm Cterm}(y)\ra (\T{\rm ng}({\rm eq}(x,y))\lra {\rm val}(x)\neq {\rm val}(y))\big)\\
			\tag{$\kf3$} &\forall x({\rm Sent}_{\lt}(x)\ra (\T{\rm ng}({\rm ng}(x))\lra \T x))\\
			\tag{$\kf4$} &\forall x\forall y({\rm Sent}_{\lt}({\rm and}(x,y))\ra (\T{\rm and}(x,y)\lra \T x\land \T y))\\
			\tag{$\kf5$} &\forall x\forall y({\rm Sent}_{\lt}({\rm and}(x,y))\ra (\T{\rm ng}({\rm and}(x,y))\lra \T {\rm ng}(x)\vee \T {\rm ng}(y)))\\
			\tag{$\kf6$} & \forall u\forall x\big({\rm Sent}_{\lt}({\rm all}(u,x))\ra (\T{\rm all}(u,x)\lra \forall y \T{\rm sub}(x,u,{\rm num}(y)))\big)\\
			\tag{$\kf7$} & \forall u\forall x\big({\rm Sent}_{\lt}({\rm all}(u,x))\ra (\T{\rm ng}({\rm all}(u,x))\lra \exists y \T{\rm sub}({\rm ng}(x),u,{\rm num}(y)))\big)\\
			\tag{$\kf8$}& \forall x({\rm Cterm}(x)\ra (\T{\rm sub}(\corn{\T v},\corn{v},x)\lra \T{\rm val}(x)))\\
			\tag{$\kf9$} & \forall x\big({\rm Cterm}(x)\ra \\
			\notag &(\T{\rm sub}(\corn{\neg \T v},\corn{v},x)\lra (\T{\rm ng}({\rm val}(x))\vee \neg {\rm Sent}_{\lt}({\rm val}(x))))\big)
		\end{align}
	\end{de}
	
	Stronger systems are obtained by adding to $\kf$ some axioms forcing a consistent or a complete truth predicate:
	\begin{align}
		\tag{${\sf CN}$} & \forall x\big({\rm Sent}_{\lt}(x)\ra (\T{\rm ng}(x)\ra \neg\T x)\big)\\
		\tag{${\sf CM}$} & \forall x\big({\rm Sent}_{\lt}(x)\ra (\neg \T x \ra \T{\rm ng}(x))\big)
	\end{align}
	
	The next system, called $\wkf$ from `Weak-Kleene Kripke Feferman with consistency', is based on a modfication of the truth clauses for connectives and quantifiers inspired to $\bthr$.   We abbreviate
	\begin{align*}
		&{\rm D}(x):\lra \T x\vee \T {\rm ng}(x)\\
		&{\rm D}(x,y):\lra {\rm D}(x)\land {\rm D}(y)\\
		&{\rm D}^1(x):\lra \forall y (\T{\rm sub}(x,{\rm num}(y))\vee \T{\rm sub}({\rm ng}(x),u,{\rm num}(y)))
	\end{align*}

	\begin{de}[${\sf WKFC}$]
		The system $\wkf$ extends ${\sf PAT}$ with ${\sf KF}$1-4,{\sf KF}6,{\sf KF}8-9, ${\sf CN}$, and
		\begin{align*}
			\tag{$\wkf\land $}&\forall x\forall y({\rm Sent}_{\lt}({\rm and}(x,y))\ra (\T{\rm ng}({\rm and}(x,y)))\lra {\rm D}(x,y)\land (\T{\rm ng}(x)\vee \T{\rm ng}(y))))\\
			\tag{$\wkf \forall$} &\forall u\forall x\big({\rm Sent}_{\lt}({\rm all}(u,x))\ra (\T{\rm ng}({\rm all}(u,x))\lra {\rm D}^1(x)\land\exists y \T{\rm sub}({\rm ng}(x),u,{\rm num}(y)))\big)
		\end{align*} 
		
	\end{de}

The last collection of axioms results in the system $\dt$ introduced by Feferman in \cite{fef08} -- and also Feferman's preferred system of truth -- whose truth predicate is based in the logic $\fthr$.\footnote{To be precise, we are not presenting here the original axiomatization by Feferman, but a variant of it considered in \cite{fuj10}. }

	\begin{de}[$\dt$]
		The system $\dt$ extends $\wkf$ with 
		\begin{align*}
			\tag{$\dt\!\!\fcond$} &\forall x\forall y({\rm Sent}_{\lt^\fcond}({\rm fc}(x,y))\ra (\T{\rm fc}(x,y)\lra (\T{\rm ng}(x) \vee (\T x\land \T y))))\\
			\tag{$\dt\neg \!\!\fcond$} &\forall x\forall y({\rm Sent}_{\lt^\fcond}({\rm fc}(x,y))\ra (\T{\rm ng}({\rm fc}(x,y)) \lra (\T x\land \T{\rm ng}(y))))
		\end{align*}
	\end{de}
	\begin{remark}\label{rem:dtextc} Similarly to what is described by Remark \ref{rem:extfcon}, $\ra$ and $\fcond$ are externally equivalent in $\dt$ and variants thereof, whereas by the properties of the fixed-point semantics  introduced $ \T\corn{\vphi\ra\psi}\lra \T\corn{\vphi\fcond\psi}$ is \emph{not} provable in such theories. 
	\end{remark}

	The next lemma collects some simple facts concerning the provability and unprovability of Liar sentences in the Kripke-Feferman systems just introduced.
	
	\begin{lem}\label{lem:liarkf} Let $l$ be a $\lnat$ term such that ${\sf PAT}\vDash l=\corn{\neg \T l}$, and let $\lambda:\lra \neg \T l$. We have:
		%\hfill
		\begin{enumeratei}
			\item $\Sigma\nvdash \lambda$, $\Sigma\nvdash \neg\lambda$ for $\Sigma\in \{\kf\}$; 
			\item $\Sigma\vdash \lambda\land \neg\T\corn{\neg\lambda}\land \neg\T\corn{\lambda}$, for $\Sigma\in \{\kfcons,\wkf,\dt\}$;
			\item $\kfcomp\vdash \T\corn{\lambda}\land \neg\lambda$.
		\end{enumeratei}
	\end{lem}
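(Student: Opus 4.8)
The plan is to isolate two elementary $\kf$-provable equivalences and then read off each clause by invoking, respectively, the fixed-point semantics (for (i)) and the axioms {\sf CN}, {\sf CM} (for (ii), (iii)). Write $\corn\lambda$ for the code of the Liar sentence $\lambda=\neg\T l$. Since ${\sf PAT}\vdash l=\corn\lambda$ and $\T$ is an ordinary predicate, substituting the provably equal terms $l$ and $\corn\lambda$ in $\lambda=\neg\T l$ gives the Liar biconditional
\[
\lambda\;\lra\;\neg\T\corn\lambda .
\]
The second equivalence, which is the technical heart of the argument, is
\[
\T\corn\lambda\;\lra\;\T\corn{\neg\lambda},
\]
which I would derive from ($\kf9$). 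Taking the closed-term code $\corn l$ as the witness $x$ in ($\kf9$) we have ${\rm sub}(\corn{\neg\T v},\corn v,\corn l)=\corn{\neg\T l}=\corn\lambda$ and ${\rm val}(\corn l)=\corn\lambda$, so ($\kf9$) reduces to $\T\corn\lambda\lra(\T{\rm ng}(\corn\lambda)\vee\neg{\rm Sent}_{\lt}(\corn\lambda))$; since ${\rm ng}(\corn\lambda)=\corn{\neg\lambda}$ and ${\sf PAT}\vdash{\rm Sent}_{\lt}(\corn\lambda)$, this is exactly the displayed equivalence. Both equivalences use only axioms common to $\kf$, $\kfcons$, $\wkf$, $\dt$, and $\kfcomp$: the Liar is a negated atomic truth predication, so the Weak-Kleene and $\fthr$ modifications to the conjunction, quantifier, and conditional clauses are irrelevant, and both equivalences therefore hold in every theory named in the statement.

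For (i) I would argue semantically, using that over the standard model $\nat$ every fixed point $X$ yields a model $(\nat,X)\vDash\kf$, whence $\kf\vdash\psi$ implies $(\nat,X)\vDash\psi$. In such a model $\T l$ is interpreted as ``$\corn\lambda\in X$'', so $(\nat,X)\vDash\lambda$ iff $\corn\lambda\notin X$. The equivalence $\T\corn\lambda\lra\T\corn{\neg\lambda}$ forces $\corn\lambda\in X$ iff $\corn{\neg\lambda}\in X$ in every fixed point. Hence in the least fixed point, which is consistent, neither can hold, so $\corn\lambda\notin X$ and $(\nat,X)\vDash\lambda$; this refutes $\neg\lambda$ and gives $\kf\nvdash\neg\lambda$. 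Dually, in the greatest fixed point, which is complete, one of $\corn\lambda,\corn{\neg\lambda}$ lies in $X$, whence by the equivalence both do; thus $\corn\lambda\in X$, $(\nat,X)\vDash\neg\lambda$, and $\kf\nvdash\lambda$. The existence of these two fixed points is precisely the lattice lemma recorded above, together with the noted facts that the least fixed point is consistent and the greatest complete.

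For (ii) and (iii) I would reason purely proof-theoretically from the two equivalences. For $\Sigma\in\{\kfcons,\wkf,\dt\}$, instantiating {\sf CN} at $\corn\lambda$ gives $\T\corn{\neg\lambda}\to\neg\T\corn\lambda$; chaining with $\T\corn\lambda\to\T\corn{\neg\lambda}$ yields $\T\corn\lambda\to\neg\T\corn\lambda$, hence $\neg\T\corn\lambda$, then $\neg\T\corn{\neg\lambda}$ by the converse direction of the equivalence, and $\lambda$ by the Liar biconditional; conjoining proves $\lambda\wedge\neg\T\corn{\neg\lambda}\wedge\neg\T\corn\lambda$. For (iii), instantiating {\sf CM} at $\corn\lambda$ gives $\neg\T\corn\lambda\to\T\corn{\neg\lambda}$; chaining with $\T\corn{\neg\lambda}\to\T\corn\lambda$ yields $\neg\T\corn\lambda\to\T\corn\lambda$, hence $\T\corn\lambda$, and $\neg\lambda$ follows from the Liar biconditional, giving $\T\corn\lambda\wedge\neg\lambda$.

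The only genuinely delicate step is the derivation of $\T\corn\lambda\lra\T\corn{\neg\lambda}$ from ($\kf9$): one must verify the syntactic bookkeeping that ${\rm sub}(\corn{\neg\T v},\corn v,\corn l)=\corn\lambda$ and that ${\sf PAT}$ proves ${\rm val}(\corn l)=\corn\lambda$ and ${\rm Sent}_{\lt}(\corn\lambda)$, i.e.\ that the diagonalization fixing $l$ and the representability of the syntactic functions behave as expected. Everything else is propositional manipulation of the two equivalences together with {\sf CN}/{\sf CM}, or an appeal to the fixed-point lattice, and I expect no further obstacles.
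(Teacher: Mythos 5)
Your proof is correct, and the paper itself gives no proof of this lemma (it is stated as collecting ``simple facts''), so your argument fills the gap in exactly the intended, standard way: the diagonal identity $\lambda\lra\neg\T\corn{\lambda}$ plus the ($\kf9$)-instance $\T\corn{\lambda}\lra\T\corn{\neg\lambda}$, with the least (consistent) and greatest (complete) fixed points over $\nat$ witnessing the two independence claims in (i), and {\sf CN}/{\sf CM} doing the propositional work in (ii) and (iii). The bookkeeping you flag — that ($\kf9$) is among the axioms of $\wkf$ and $\dt$ and that ${\sf PAT}$ proves ${\rm val}(\corn{l})=\corn{\lambda}$ and ${\rm Sent}_{\lt}(\corn{\lambda})$ — checks out against the paper's definitions.
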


%%%%%%%%%%%%%%%%%%%%%%%%%%%%%%%%%%%%%%%%%%Truth-theoretic completeness%%%%%%%%%%%%%%%%%%%%%%%%%%%%%%%%%%%%%%%%%%%%%%%%%%%%%%%%%%%%%%%%

\section{Truth-theoretic Completeness}\label{secttc}
	
	In this section we establish the fundamental link between the classical modal logics introduced in \S\ref{sec:clmodt} and the Kripke-Feferman theories of truth in the form of Solovay-style completeness results.% \footnote{We note in passing that such results are fairly straightforward for systems of truth directly formulated in the nonclassical logics considered in \S\ref{sec:nocltr} in the style of ${\sf PKF}$ from \cite{haho06}. %The same holds for systems that artificially retain the full $\T$-schema by restricting self-reference \cite{begu93}.
	%} 
	
\subsection{The modal logic of ${\sf KF}$}

We start by establishing the Solovay-completeness of the basic Kripke-Feferman system {\sf KF}.

	\begin{de}[Truth-realization, Truth-interpretation]\label{def:trint}
		A \emph{truth-realization} is a function $\star \colon {\rm Prop}\to {\rm Sent}_{\lt}$. Each realization induces a \emph{truth-interpretation}, i.e.~a function $\mf{I}^\star\colon {\rm Sent}_{\lbox}\to {\rm Sent}_{\lt}$ such that:
	\[
		\trre{\star}(\vphi)=\begin{cases}
								p_i^\star,&\text{if $\vphi:= p_i$}\\
								0=0,&\text{if $\vphi=\top$}\\
								0=1,&\text{if $\vphi=\bot$}\\
								\neg\trre{\star}(\psi),&\text{if $\vphi=\neg\psi$}\\
								\trre{\star}(\psi)\land \trre{\star}(\chi),&\text{if $\vphi=\psi\land \chi$}\\
								%\trre{\star}(\psi)\fcond \trre{\star}(\chi),&\text{if $\vphi=\psi\fcond\chi$}\\
								%\trre{\star}(\psi)\ra \trre{\star}(\chi),&\text{if $\vphi=\psi\ra\chi$}\\

								\T\;(\corn{\trre{\star}(\psi)}),&\text{if $\vphi=\Box \psi$.}
							\end{cases}
	\]
	%In the last clause, $\mf{i}^*$ is the term representing the function $\mf{I}^*$ in $\lt$. 
	The definition can easily be extended to the case of $\lbox^\fcond$ and $\lt^\fcond$ by adding an extra clause for the truth interpretation:
	\[
	    \trre{\star}(\psi)\fcond \trre{\star}(\chi),\;\;\;\text{if $\vphi=\psi\fcond\chi$}
	\]
	Since this will be clear from the context, we will use the term \emph{truth-interpretation} for both translations.
	\end{de}
	
	%Notice that we allow the domain of a truth interpretation $\trre{\star}$ to be ${\rm Sent}_{\lt}$. This will be important in what follows because the full notion of truth interpretation will only come into play when establishing the modal logic of $\dt$. 

	The following is the main result of the present subsection, and establishes that ${\sf BM}$ is the modal logic of the basic system $\kf$. 
	
	\begin{theo}\label{thm:kfmain}For all $\vphi\in{\rm Sent}_{\lbox}$, ${\sf BM}\vdash \vphi$ if and only if for all realizations $\star$, $\;{\sf KF}\vdash \trre{\star}(\vphi)$.
	\end{theo}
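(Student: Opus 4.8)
The plan is to prove the two directions separately: the forward (soundness) direction by a routine induction, and the backward (completeness) direction by combining the modal completeness theorem of Corollary~\ref{cor:ademocfaith} with the freedom of truth-tellers supplied by Lemma~\ref{lem:ttfree}. For soundness, suppose ${\sf BM}\vdash\vphi$ and fix a realization $\star$. Since the truth-interpretation $\trre{\star}$ of Definition~\ref{def:trint} commutes with the connectives and $\kf$ is classical, it suffices to check that the $\trre{\star}$-image of every axiom of ${\sf BM}$ is a $\kf$-theorem, after which an induction on the ${\sf BM}$-derivation closes the case. The propositional axioms $(\top)$, $(\bot)$, $(\neg)$, $(\wedge1)$, $(\wedge2)$, $(\Box1)$, $(\Box2)$ (with $(\vee1)$, $(\vee2)$ handled via the De~Morgan definition of $\vee$) translate directly into instances of the compositional axioms $\kf1$–$\kf9$: e.g.\ $(\wedge1)$ becomes an instance of $\kf4$, while $(\Box1)$ and $(\Box2)$ become instances of $\kf8$ and $\kf9$, using $\trre{\star}(\Box\psi)=\T\corn{\trre{\star}(\psi)}$ and the fact that $\trre{\star}(\psi)$ is a genuine $\lt$-sentence. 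The only axiom requiring real work is \eqref{faithmod}, whose translation is the faithfulness schema $\T\corn{A}\wedge\neg\T\corn{\neg A}\ra A$ for $A=\trre{\star}(\vphi)$; this is the standard fact that $\kf$ proves every sentence that is internally true but not internally false, established by an external induction on the positive complexity of $A$ from $\kf1$–$\kf9$ together with $\lt$-induction for the quantifier case.

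For completeness I argue contrapositively: assume ${\sf BM}\nvdash\vphi$. By Corollary~\ref{cor:ademocfaith} (with $(s,S)=({\sf fde},{\sf BM})$) and Corollary~\ref{cor:srprop}, there is a single-rooted, mixed, idiosyncratic, faithful, four-valued model $\mc{M}=(\{w\},Z,R,V)$ with $\mc{M},w\nVdash_s\vphi$; writing $z$ for the unique world with $Rwz$, the relevant part of the frame is simply $w\to z$ with $z$ reflexive. The idea is to mirror the classical root $w$ by an external classical model $(\nat,X)$ and the nonclassical world $z$ by membership in a fixed point $X$ (note $(\nat,X)\vDash\kf$ for any fixed point $X$, by the defining property of Kripke--Feferman systems). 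Concretely, I take $\mc{M}_{\sf KF}=\nat$ and, via Lemma~\ref{lem:ttfree}, a fixed point $X$ over $\nat$ whose behaviour on a family of truth-tellers $\tau_i$ is prescribed, and I define the realization $\star$ on each $p_i$ by a truth-teller or its negation, according to the value pair $(V_w(p_i),V_z(p_i))$.

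The choice of realization is the crux, since for each $p_i$ I must independently match the classical value $V_w(p_i)\in\{0,1\}$ and the four-valued datum $V_z(p_i)\in\{0,1,\neither,\both\}$, subject only to faithfulness. When $V_z(p_i)\in\{0,1\}$, faithfulness forces $V_w(p_i)=V_z(p_i)$, and a truth-teller $\tau_i$ with the appropriate membership of $\#\tau_i$ and $\#\neg\tau_i$ in $X$ realizes the pair, because its external value equals its internal ``true'' status. The genuinely nonclassical cases $V_z(p_i)\in\{\neither,\both\}$ leave $V_w(p_i)$ free, and here a bare truth-teller cannot decouple the external value from the internal one; the device that resolves this is to realize $p_i$ by the \emph{negation} of a truth-teller. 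A truth-teller realizing internal value $\neither$ (resp.\ $\both$), once negated, yields external value $1$ with internal value $\neither$ (resp.\ external value $0$ with internal value $\both$), which are exactly the two otherwise-unreachable pairs. Since $\kf$ admits inconsistent fixed points, the $\both$-cases cause no difficulty, and because Lemma~\ref{lem:ttfree} prescribes the truth-teller data for all $i$ at once inside a single fixed point $X$, one $(\nat,X)$ and one $\star$ suffice for every propositional variable.

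It then remains to prove a two-level correspondence by induction on the positive complexity of $\psi\in\lbox$: internally, $\#\trre{\star}(\psi)\in X$ iff $\mc{M},z\Vdash_s\psi$ and $\#\neg\trre{\star}(\psi)\in X$ iff $\mc{M},z\Vdash_s\neg\psi$; externally, $(\nat,X)\vDash\trre{\star}(\psi)$ iff $\mc{M},w\Vdash_s\psi$. The connective steps match the closure conditions of $X$ (which mirror $\kf3$–$\kf5$) against the ${\sf fde}$ clauses of Definition~\ref{def:nctm}, while the modal step uses the fixed-point property $\#\T\corn{B}\in X$ iff $\#B\in X$ (with $\kf8$, $\kf9$ for the falsity side) for the internal claim and the interpretation of $\T$ by $X$ for the external claim, together with idiosyncrasy, so that $\intfde{\Box\psi}{M}{z}=\intfde{\psi}{M}{z}$ and $\Box$ at $w$ looks only at $z$. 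Taking $\psi=\vphi$ in the external clause gives $(\nat,X)\nvDash\trre{\star}(\vphi)$, hence $\kf\nvdash\trre{\star}(\vphi)$, as required. The main obstacle throughout is exactly this realization: arranging, within a single fixed point, that the external (classical) and internal (fixed-point) values of each realized variable be controlled independently in the nonclassical cases — which is overcome by pairing truth-tellers with their negations and exploiting that $\kf$ tolerates inconsistent fixed points.
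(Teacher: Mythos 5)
Your proof is correct and follows the same overall architecture as the paper's: soundness by induction on the ${\sf BM}$-derivation (with the translation of \eqref{faithmod} as the one axiom needing a separate positive-complexity induction inside $\kf$), and completeness contrapositively via Corollaries~\ref{cor:ademocfaith} and \ref{cor:srprop}, a truth-teller realization whose behaviour is fixed by Lemma~\ref{lem:ttfree}, and a two-level truth lemma (internal at $z$, external at $w$) culminating in $(\nat,X)\nvDash\trre{\star}(\vphi)$.

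The one genuine divergence is the witness realization itself, and your variant is legitimate. The paper realizes each $p_j$ uniformly as a conjunction $\tau_{2j}\wedge\neg\tau_{2j+1}$ of two truth-tellers, pushing all dependence on the countermodel into the choice of fixed point (Lemma~\ref{lem:modfxp}); you instead realize $p_i$ by a single truth-teller or its negation, with the case split on the pair $(V_w(p_i),V_z(p_i))$ made in the definition of $\star$ itself. Your accounting of reachable value pairs is right: a bare truth-teller yields exactly $(1,1)$, $(0,0)$, $(0,\neither)$, $(1,\both)$, and negating it supplies the two remaining faithful pairs $(1,\neither)$ and $(0,\both)$, so all six pairs compatible with faithfulness are covered within one fixed point. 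The trade-off is minor: the paper's uniform conjunction makes the statement of the auxiliary lemmata (\ref{lem:modfxp}, \ref{lem:extnrp}, \ref{lem:maintc}) realization-independent and reusable for the $\kfcons$/$\kfcomp$/symmetric cases of Corollary~\ref{cor:cncmnr}, whereas your case-based $\star$ makes the base case of the truth lemma more transparent but requires re-checking, for the consistent and complete variants, that the cases actually used only call on consistent (resp.\ complete) fixed points --- which they do, since the $\both$-cases (resp.\ $\neither$-cases) simply never arise there. Either route proves the theorem.
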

	The proof of theorem \ref{thm:kfmain} consists of two parts: the soundness and the completeness of {\sf BM}. The soundness direction is established via a straightforward induction on the length of the proof in ${\sf BM}$. 
	\begin{lem}[Soundness of {\sf BM}]\label{lem:bmsound}For all $\vphi\in{\rm Sent}_{\lbox}$, if ${\sf BM}\vdash \vphi$, then for all realizations $\star$, $\;{\sf KF}\vdash \trre{\star}(\vphi)$.\end{lem}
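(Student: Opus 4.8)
The plan is to argue by induction on the length of a ${\sf BM}$-derivation of $\vphi$, exploiting that $\trre{\star}$ commutes with the Boolean connectives and that $\kf$ contains all of classical logic (indeed all of ${\sf PAT}$). The two inferential ingredients are then immediate: any step of classical propositional reasoning is translated into a classically valid $\lt$-consequence, hence derivable in $\kf$, and modus ponens is preserved because $\trre{\star}(\vphi\ra\psi)=\trre{\star}(\vphi)\ra\trre{\star}(\psi)$. The uniform-substitution character of the principles causes no extra work, since each principle is already a schema over all $\vphi$ and the claim is quantified over all realizations; a substitution instance is translated exactly as the original under a suitably modified realization. So the whole lemma reduces to showing that \emph{each axiom of ${\sf BM}$ is translated, under every realization $\star$, to a theorem of $\kf$.}

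For the compositional axioms this is a matter of unwinding $\trre{\star}$ against the syntactic function symbols and invoking the matching $\kf$-axiom. The bookkeeping fact that makes this go through is that coding commutes with the translation: provably in ${\sf PAT}$ one has $\corn{\trre{\star}(\neg\psi)}={\rm ng}(\corn{\trre{\star}(\psi)})$, $\corn{\trre{\star}(\psi\land\chi)}={\rm and}(\corn{\trre{\star}(\psi)},\corn{\trre{\star}(\chi)})$, and $\corn{\trre{\star}(\Box\psi)}$ is the numeral of $\corn{\trre{\star}(\psi)}$. Granting this, $(\top)$ and $(\bot)$ follow from $\kf1$--$\kf2$ together with $\mathsf{PA}\vdash 0=0$ and $\mathsf{PA}\vdash 0\neq 1$; $(\neg)$ from $\kf3$; $(\land1)$ from $\kf4$; $(\land2)$ from $\kf5$; and, reading $\vee$ as the De Morgan dual of $\land$, $(\vee1)$ and $(\vee2)$ from $\kf3$--$\kf5$. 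The modal-iteration axioms $(\Box1)$ and $(\Box2)$ come from $\kf8$ and $\kf9$ respectively, where for $(\Box2)$ one additionally uses that $\trre{\star}(\psi)$ is, provably in ${\sf PAT}$, a genuine element of ${\rm Sent}_{\lt}$, so that the spurious disjunct $\neg{\rm Sent}_{\lt}({\rm val}(x))$ of $\kf9$ drops out.

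The real work, and the step I expect to be the main obstacle, is the faithfulness axiom \eqref{faithmod}. Under $\trre{\star}$ its instances are exactly the sentences $\T\corn{C}\land\neg\T\corn{\neg C}\ra C$ with $C=\trre{\star}(\vphi)$, and since $\vphi$ and $\star$ are arbitrary I would aim to establish the full schema
\[
\kf\vdash \T\corn{C}\land\neg\T\corn{\neg C}\ra C\qquad\text{for every }C\in{\rm Sent}_{\lt},
\]
by external induction on the positive complexity of $C$, carried out simultaneously with its dual $\kf\vdash \T\corn{\neg C}\land\neg\T\corn{C}\ra\neg C$. The negation case is then handled by $\kf3$, which makes the two statements swap, so the induction is genuinely a single simultaneous argument; the base cases are the atoms, reducing to $\kf1$--$\kf2$ and $\mathsf{PA}$ for arithmetical equations and to $\kf8$--$\kf9$ for truth-atoms. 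I expect the delicate part to be the conjunction (and dually disjunction) cases, where the hypotheses delivered by $\kf4$ and $\kf5$ must be combined with \emph{both} halves of the inductive hypothesis, and where no use of consistency or completeness of the truth predicate is available; arranging these cases so that they close cleanly is exactly the feature that sets \eqref{faithmod} apart from the manifestly compositional principles and justifies its status as a separate axiom of the modal logic.
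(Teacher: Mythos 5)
Your reduction is set up correctly, and for the compositional axioms $(\top)$--$(\Box 2)$ the verifications against $\kf1$--$\kf9$ go through essentially as you describe (the paper itself gives no details beyond ``straightforward induction on the length of the proof'', so there is nothing finer to compare against there). You are also right that, because realizations send propositional variables to \emph{arbitrary} sentences of $\lt$ and $\trre{\star}$ commutes with the connectives, soundness of \eqref{faithmod} under every realization is exactly the claim that $\kf\vdash \T\corn{C}\land\neg\T\corn{\neg C}\ra C$ for every $C$ that is a Boolean/$\T$-combination of arbitrary $\lt$-sentences; so your schema is not just a convenient lemma but the precise content of the step, and the simultaneous induction with its dual is the natural attack.

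The gap is that the case you flag as ``delicate'' --- the negated conjunction (and dually disjunction) --- does not close, and the failure is semantic rather than a matter of arranging the cases. From $\T\corn{\neg(A\land B)}$ and $\neg\T\corn{\neg\neg(A\land B)}$ you get, via $\kf3$--$\kf5$, only $\T\corn{\neg A}\vee\T\corn{\neg B}$ and $\neg\T\corn{A}\vee\neg\T\corn{B}$; the inductive hypotheses fire only when one and the same sentence witnesses both disjuncts, and in the mixed case where $A$ is ``both'' ($\T\corn{A}\land\T\corn{\neg A}$) and $B$ is ``neither'' ($\neg\T\corn{B}\land\neg\T\corn{\neg B}$) you get no information about the classical values of $A$ or $B$. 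Concretely: by Lemma~\ref{lem:ttfree} there is a fixed point $X$ of $\Phi$ over $\nat$ with $\tau_0$ both true and false and $\tau_1$ neither, and $(\nat,X)\vDash\kf$. Put $A:=\tau_0$ and $B:=\neg\tau_1$; both are classically true in $(\nat,X)$, so $\neg(A\land B)$ is classically false, yet $\#\neg(A\land B)\in X$ (because $\#\neg\tau_0\in X$) and $\#(A\land B)\notin X$ (because $\#\neg\tau_1\notin X$). Hence $C:=\neg(\tau_0\land\neg\tau_1)$ refutes the schema in a $\kf$-model, and under the realization $p^\star=\tau_0$, $q^\star=\neg\tau_1$ the translation of the \eqref{faithmod}-instance with $\vphi:=\neg(p\land q)$ is false in $(\nat,X)$. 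So this route cannot be completed for plain $\kf$ --- and since your reduction to the schema is an equivalence, the obstruction is not specific to your proof strategy but hits the four-valued case as such. The schema, and with it your induction, does go through for $\kfcons$, $\kfcomp$, and their disjunctive combination, where a single fixed point cannot contain both a ``both'' and a ``neither'' sentence; that is where the argument you sketch is sound.
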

	
%\begin{proof}By induction on the length of a derivation in ${\sf BM}$.\end{proof}	
	
The converse direction will be proven in its contrapositive form.
\begin{lem}[Truth-Completeness]\label{lem:bmtcomp}For all $\vphi\in{\rm Sent}_{\lbox}$, if ${\sf BM}\not\vdash \vphi$, then there exists a realization $\star$, $\;{\sf KF}\not\vdash\trre{\star}(\vphi)$.\end{lem}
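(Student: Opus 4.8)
The plan is to run a Solovay-style argument that converts a modal countermodel for $\varphi$ into a Kripke--Feferman model falsifying its truth-interpretation. Since $\mathsf{BM}\nvdash\varphi$, Corollary~\ref{cor:ademocfaith} with $(s,S)=({\sf fde},{\sf BM})$, together with Corollary~\ref{cor:srprop}, yields a \emph{single-rooted}, mixed, idiosyncratic, faithful model $\mc{M}=(\{w\},Z,R,V)$ with $\mc{M},w\nVdash_{\sf fde}\varphi$. By {\sc functionality} the root $w$ sees a unique nonclassical world $z\in Z$, and by {\sc idiosyncracy} $z$ sees only itself. The idea is to read $w$ off as an external classical model of arithmetic and $z$ as a fixed point: I fix $\mc{N}:=\nat\vDash{\sf PA}$ and build a fixed point $X$ so that, under a suitable realization $\star$, the value of $\trre{\star}(\psi)$ \emph{inside} $X$ mirrors $\intfde{\psi}{M}{z}$, while the classical value of $\trre{\star}(\psi)$ in $(\mc{N},X)$ mirrors the value of $\psi$ at $w$.

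The crux is to design $\star$ so that these two values are prescribed \emph{independently} at the atoms. For a sentence $\sigma$ record its \emph{$X$-value}: $1$ if $\#\sigma\in X$ and $\#\neg\sigma\notin X$; $0$ if only $\#\neg\sigma\in X$; $\neither$ if neither is in $X$; $\both$ if both are. For an atom $p_i$ I set $\trre{\star}(p_i):=(0{=}0)$ when $V_z(p_i)=1$ and $\trre{\star}(p_i):=(0{=}1)$ when $V_z(p_i)=0$; here faithfulness forces $V_w(p_i)=V_z(p_i)$, and both the $X$-value and the classical value of these equations equal the prescribed value in every fixed point. For the genuinely nonclassical atoms I use the truth-tellers $\tau_i$ of Lemma~\ref{lem:ttfree}, exploiting that $\tau_i$ is classically true in $(\mc{N},X)$ exactly when $\#\tau_i\in X$: when $V_z(p_i)=\neither$ I set $\trre{\star}(p_i):=\tau_i$ if $V_w(p_i)=0$ and $\trre{\star}(p_i):=\neg\tau_i$ if $V_w(p_i)=1$, and symmetrically (again swapping $\tau_i$ and $\neg\tau_i$) when $V_z(p_i)=\both$. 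Feeding both sets of Lemma~\ref{lem:ttfree} the single choice $\{i\sth V_z(p_i)=\both\}$ produces a fixed point $X$ in which, checking the four cases with the help of $(\kf3)$ for the double negation in the $\neg\tau_i$ options, the $X$-value of $\trre{\star}(p_i)$ equals $V_z(p_i)$ and the classical value of $\trre{\star}(p_i)$ equals $V_w(p_i)$.

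The main lemma is then an induction on the positive complexity of subformulas $\psi$ of $\varphi$, proving \emph{simultaneously} that the $X$-value of $\trre{\star}(\psi)$ equals $\intfde{\psi}{M}{z}$ and that the classical value of $\trre{\star}(\psi)$ in $(\mc{N},X)$ equals the value of $\psi$ at $w$. The Boolean cases are routine: as $\trre{\star}$ commutes with $\neg,\wedge,\vee$, axioms $(\kf3)$--$(\kf5)$ make the $X$-value combine exactly as the ${\sf fde}$-clauses of Definition~\ref{def:nctm} prescribe, while $(\mc{N},X)$ evaluates the connectives two-valuedly. The modal case is where the self-applicative axioms do the work: for $\trre{\star}(\Box\psi)=\T\corn{\trre{\star}(\psi)}$, axioms $(\kf8)$ and $(\kf9)$ give $\#\T\corn{\sigma}\in X$ iff $\#\sigma\in X$ and $\#\neg\T\corn{\sigma}\in X$ iff $\#\neg\sigma\in X$ for sentences $\sigma$, so $\T\corn{\trre{\star}(\psi)}$ has the same $X$-value as $\trre{\star}(\psi)$, which by {\sc idiosyncracy} (the $\mathrm{inf}_\preceq$ ranges over the single world $z$) equals $\intfde{\Box\psi}{M}{z}$; for the classical value, $(\mc{N},X)\vDash\T\corn{\trre{\star}(\psi)}$ iff $\#\trre{\star}(\psi)\in X$ iff $\mc{M},z\Vdash_{\sf fde}\psi$, which matches the classical clause for $\Box$ at $w$.

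Applying the lemma to $\psi=\varphi$, and recalling that values at the classical world $w$ lie in $\{0,1\}$, from $\mc{M},w\nVdash_{\sf fde}\varphi$ I conclude that $\trre{\star}(\varphi)$ is classically false in $(\mc{N},X)$. Since $X$ is a fixed point, $(\mc{N},X)\vDash\kf$, and hence $\kf\nvdash\trre{\star}(\varphi)$, as required. I expect the main obstacle to be precisely the second paragraph: arranging a \emph{single} realization that decouples the value at $z$ from the value at $w$. The truth-teller device together with the freedom of Lemma~\ref{lem:ttfree} is what makes this possible, and keeping the four atomic cases and their interaction with $(\kf3)$ straight is the delicate bookkeeping on which the whole argument rests.
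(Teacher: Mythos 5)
Your proposal is correct, and it follows the paper's overall Solovay-style architecture exactly: extract a single-rooted, mixed, idiosyncratic, faithful ${\sf fde}$-countermodel from Corollaries \ref{cor:ademocfaith} and \ref{cor:srprop}, realize the atoms by self-referential sentences, pick a fixed point that mirrors the valuation, and run an induction matching values at $z$ and $w$ simultaneously. Where you genuinely diverge is in the {\sc Witness Realization}. The paper uses the \emph{model-independent} assignment $p_j^\bullet=\tau_{2j}\wedge\neg\tau_{2j+1}$, with two truth-tellers per atom, and pushes all model-dependence into the choice of fixed point via Lemma \ref{lem:modfxp}; the four possible pairs $(\intfde{p_j}{M}{z},\intfde{p_j}{M}{w})$ are then decoupled by toggling $\tau_{2j}$ and $\tau_{2j+1}$ separately. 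You instead use a \emph{model-dependent} case split sending $p_i$ to $0{=}0$, $0{=}1$, $\tau_i$, or $\neg\tau_i$ according to $(V_z(p_i),V_w(p_i))$, feeding Lemma \ref{lem:ttfree} the single set $\{i\sth V_z(p_i)=\both\}$; this is in fact closer in spirit to the realizations $\circ$ and $\dagger$ the paper deploys in \S\ref{sec:mdfptr} (with truth-tellers in place of the liar, which is the right move for plain $\kf$, since in an arbitrary fixed point $\#\lambda$ and $\#\neg\lambda$ are either both in or both out and so cannot realize the value $1$ or $0$ at $z$). Your induction also packages the two halves of the paper's Lemma \ref{lem:extnrp} and Main Lemma \ref{lem:maintc} into a single statement carrying the full four-valued ``$X$-value,'' which is a clean bookkeeping choice. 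The trade-off is minor: the paper's uniform $\bullet$ yields one realization that works for every countermodel and transfers verbatim to the $\kfcons$/$\kfcomp$/${\sf M}$ corollaries, whereas your realization must be recomputed from $V$; for the existential claim of Lemma \ref{lem:bmtcomp} this costs nothing. Your six atomic cases all check out (faithfulness excludes $(1,0)$ and $(0,1)$, and the double-negation clause of the jump handles the $\neg\tau_i$ options), so the argument is sound.
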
	

Before proving  Lemma \ref{lem:bmtcomp}, we briefly sketch the general proof strategy. We start the proof by assuming ${\sf BM}\not\vdash \vphi$. By the modal completeness theorem and, in particular, Corollaries \ref{cor:ademocfaith} and \ref{cor:srprop}, we know that there is a mixed, faithful single rooted idiosyncratic {\sf FDE}-model $\mc{M}$ such that at its classical root $w$, $\mc{M},w\forlo{fde}\neg\vphi$. We then choose a particular truth-realization, which allows us to ``code up'' or ``mimic''  the valuation of the {\sf FDE}-model by choosing an appropriate {\sf KF}-model. In the {\sf KF}-model the truth interpretation of $\neg\vphi$ will be true. Hence the truth-interpretation of $\vphi$ under the chosen truth-realization, which we label the {\sc Witness Realization}, is not a theorem of {\sf KF}. The idea behind the {\sc Witness Realization} is to interpret propositional variables as a conjunction of truth tellers. As we have seen in Lemma \ref{lem:ttfree} for any collection of truth tellers we can find {\sf KF}-models that declare truth tellers of the particular collection true (false). The following Lemma, which follows easily from  Lemma \ref{lem:ttfree}, can be seen as a particular application of this fact.

	\begin{lem}\label{lem:modfxp}
		Let $\Phi$ be as above. Then for any mixed, faithful, single-rooted idiosyncratic model $\mc{M}$ based on an evaluation scheme ${e}\in\{\mathsf{fde, ks3, k3, lp}\}$ and any $\mc{N}\vDash {\sf PA}$ we can find a fixed point $S\subset N$ ($N$ being the domain of $\mc{N}$) of $\Phi$ such that for all $p_j\in{\sf Prop}$ with $j\in\omega$:
		\begin{enumeratei}
			\item	 $\#\tau_{2j}\in S$ iff $\mc{M},w\Vdash_e p_j$ or $\mc{M},z\Vdash_e p_j$;
			\item  $\#\neg\tau_{2j}\in S$\;\;\text{iff\;\;$\mc{M},z\Vdash_e \neg p_j$};
			\item $\#\tau_{2j+1}\in S$\;\;\text{iff\;\;$\mc{M},w\Vdash_e \neg p_j\;\text{ and }\; \mc{M},z\Vdash_e p_j$};
			\item $\#\neg\tau_{2j+1}\in S$ iff $\mc{M},z\Vdash_e p_j$.
		\end{enumeratei}
Moreover, for {\sf k3} (\,{\sf lp}) we can find a consistent (complete) fixed point $S$; for {\sf ks3} depending on the model we can choose either a consistent or a complete fixed point $S$.
	\end{lem}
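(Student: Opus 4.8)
The plan is to obtain $S$ by a single application of the truth-teller freedom Lemma~\ref{lem:ttfree}, reading the two index sets it requires directly off the model. Write $w$ for the classical root and $z$ for the unique nonclassical world it sees. Let $Y\subseteq\omega$ collect those $2j$ with $\mc{M},w\Vdash_e p_j$ or $\mc{M},z\Vdash_e p_j$, together with those $2j{+}1$ with $\mc{M},w\Vdash_e\neg p_j$ and $\mc{M},z\Vdash_e p_j$; and let $Z_0\subseteq\omega$ collect those $2j$ with $\mc{M},z\Vdash_e\neg p_j$, together with those $2j{+}1$ with $\mc{M},z\Vdash_e p_j$. With these sets the clauses (i)--(iv) are nothing but the assertions ``$\#\tau_i\in S$ iff $i\in Y$'' and ``$\#\neg\tau_i\in S$ iff $i\in Z_0$'', so feeding $Y$, $Z_0$ and the given $\mc{N}$ into Lemma~\ref{lem:ttfree} produces a fixed point $S\subseteq{\rm Sent}^{\mc{N}}_{\lt}$ satisfying (i)--(iv). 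Since only the truth-tellers are prescribed, this half is pure bookkeeping.

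The content of the \emph{moreover} clause is the verification of the side conditions that Lemma~\ref{lem:ttfree} attaches to consistency and completeness. For ${\sf k3}$ I would establish $Y\cap Z_0=\varnothing$ and invoke its consistent half; for ${\sf lp}$ I would establish $Y\cup Z_0=\omega$ and invoke its complete half; for ${\sf ks3}$ the single nonclassical world $z$ is, by the definition of a symmetric valuation (Definition~\ref{dfn:mixval}), either of consistent type (values in $\{0,1,\neither\}$) or of complete type (values in $\{0,1,\both\}$), so the model falls under the ${\sf k3}$- or the ${\sf lp}$-analysis and one selects the matching fixed point. Each verification is a short case split on $V_z(p_j)\in\{0,1,\neither,\both\}$, unwinding the clauses for $p_j$ and $\neg p_j$ at $w$ and at $z$ from Definition~\ref{def:trumim}.

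The one genuinely load-bearing ingredient is \emph{faithfulness}, and it is also where I expect the main difficulty. For the disjointness needed in the ${\sf k3}$ case, the only dangerous configuration is an odd index $2j{+}1$ lying in both $Y$ and $Z_0$; by the definitions this forces $\mc{M},w\Vdash_e\neg p_j$ together with $\mc{M},z\Vdash_e p_j$, i.e. $V_w(p_j)=0$ while $V_z(p_j)=1$, and faithfulness excludes exactly this, since a classical value at $z$ must be matched at $w$; the even indices are settled the same way, giving $Y\cap Z_0=\varnothing$. In the ${\sf lp}$ case completeness of $V_z$ at once places every even index, and every odd index with $V_z(p_j)\in\{1,\both\}$, into $Y\cup Z_0$, leaving only the odd indices with $V_z(p_j)=0$, which clauses (iii)--(iv) leave undecided. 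This residue is the crux: here faithfulness gives $V_w(p_j)=0$, whence $\#\tau_{2j}\notin S$ and $\#\neg\tau_{2j}\in S$, so the status of $\tau_{2j{+}1}$ is never consulted by the remaining clauses nor by the downstream embedding, and it may simply be placed in $S$ to force $Y\cup Z_0=\omega$ and yield a genuinely complete fixed point. It is precisely this meshing of the parity encoding with faithfulness that allows the single statement to cover all four evaluation schemes simultaneously.
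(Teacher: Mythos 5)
Your proposal is correct and follows exactly the route the paper intends: the paper offers no written proof of Lemma~\ref{lem:modfxp}, merely remarking that it ``follows easily from Lemma~\ref{lem:ttfree}'', and your reduction---reading the index sets $Y$, $Z_0$ off the model and checking disjointness/covering via faithfulness---is precisely that argument spelled out. Your treatment of the ${\sf k3}$ case (faithfulness kills the only configuration that could put an index into both sets, and in fact bars odd indices from $Y$ altogether) is exactly right.

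Your observation about the ${\sf lp}$ case is a genuine and worthwhile catch, not merely a presentational choice: when $V_z(p_j)=0$ in a complete model, clauses (iii) and (iv) as literally stated force both $\#\tau_{2j+1}\notin S$ and $\#\neg\tau_{2j+1}\notin S$, which no \emph{complete} fixed point can satisfy, so the ``moreover'' clause is in tension with the biconditionals. Your repair---placing $\#\tau_{2j+1}$ into $S$ for exactly those residual odd indices, i.e.\ weakening (iii) to a one-way implication there---is the right one: every downstream appeal to (iii) (in Lemmata~\ref{lem:extnrp} and \ref{lem:maintc}) occurs under hypotheses ($\mc{M},w\Vdash_e p_j$, or $\mc{M},z\Vdash_e p_j$, or $V_z(p_j)={\sf n}$) that are disjoint from the residual configuration $V_w(p_j)=V_z(p_j)=0$, and clause (iv), which \emph{is} consulted in that configuration, is left intact. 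The same remark applies to the complete-type subcase of ${\sf ks3}$.
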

	
% 	\begin{proof}
% 		The proof follows from Lemma \ref{lem:ttfree} and the properties of the evaluation schemes.\JS{Perhaps we need to provide some more detail.} 
% 	\end{proof}

	\begin{de}[Witness Realization]
		Let $\bullet \colon {\rm Prop}\to {\rm Sent}_{\lt}$ be a truth realization such that for all $j\in\omega$
	\[p_j^\bullet=\tau_{2j}\wedge\neg\tau_{2j+1}.\]
	$\bullet$ is called the {\sc Witness Realization}.
	\end{de}

	Our next claim is key to main lemma of this section, and describes the behaviour of the witness realization at the nonclassical world. 
	
	\begin{lem}\label{lem:extnrp}
		$\mc{M}$, $S$ and ${e}$ be as in Lemma \ref{lem:modfxp} and $\bullet$ be the {\sc Witness Realization}. Then for all $\vphi\in \lbox$:
		\begin{enumeratei}
			\item if $\mc{M},z\Vdash_e \vphi$, then $\trre{\bullet}(\vphi)\in S$;
			\item if $\mc{M},z \not\Vdash_e \vphi$, then $\trre{\bullet}(\vphi)\notin S$.
		\end{enumeratei}
	\end{lem}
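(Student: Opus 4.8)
The plan is to prove both claims simultaneously by induction on the positive complexity of $\vphi$, that is, on the complexity of $\vphi$ where negations are pushed inward so that $\neg$ only ever attaches to atoms or to $\Box$-formulas. Since we are in a nonclassical setting where $\mc{M},z\Vdash_e\vphi$ and $\mc{M},z\not\Vdash_e\vphi$ are \emph{not} simply complementary (a sentence may be neither true nor both), it is essential to prove (i) and (ii) together rather than deriving one from the other. The base cases are $\vphi$ a propositional variable $p_j$ and $\vphi$ a negated variable $\neg p_j$, which I would settle directly: by the definition of the {\sc Witness Realization}, $\trre{\bullet}(p_j)=\tau_{2j}\wedge\neg\tau_{2j+1}$, and $\trre{\bullet}(p_j)\in S$ holds iff $\#\tau_{2j}\in S$ and $\#\neg\tau_{2j+1}\notin S$ (using that $S$ is a fixed point, so conjunction behaves correctly). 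By clauses (i) and (iv) of Lemma \ref{lem:modfxp}, this is equivalent to $\mc{M},z\Vdash_e p_j$, which handles both directions for the atomic case; the negated-atom case $\neg p_j$ is handled analogously using clauses (ii)/(iii) of Lemma \ref{lem:modfxp}.

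First I would dispatch the connective cases $\wedge$, $\vee$, and their negated forms $\neg(\psi\wedge\chi)$, $\neg(\psi\vee\chi)$, together with double negation $\neg\neg\psi$. Here the inductive hypothesis gives both (i) and (ii) for the immediate subformulas, and I would push these through the defining clauses of the relevant evaluation scheme ${e}$ on the nonclassical side, matched against the compositional behaviour of the fixed point $S$ on the truth-theoretic side. The crucial point is that $S$ is a fixed point of $\Phi$, so for any $\lt$-sentences the equivalences $\trre{\bullet}(\psi)\wedge\trre{\bullet}(\chi)\in S \Leftrightarrow \trre{\bullet}(\psi)\in S \text{ and } \trre{\bullet}(\chi)\in S$, and the corresponding clauses for negated conjunctions and quantifier-free disjunctions, hold by the {\sf KF}-axioms governing $\Phi$. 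The match between these and the Strong-Kleene (or the appropriate scheme's) truth tables at $z$ is exactly what the induction requires; one checks (i) and (ii) clause by clause.

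The main obstacle, and the genuinely modal heart of the argument, is the case where $\vphi$ is $\Box\psi$ (and its negation $\neg\Box\psi$). By the truth-interpretation, $\trre{\bullet}(\Box\psi)=\T\,\corn{\trre{\bullet}(\psi)}$. Since the frame is single-rooted and idiosyncratic, the nonclassical world $z$ sees only itself, so $\mc{M},z\Vdash_e\Box\psi$ holds iff $\mc{M},z\Vdash_e\psi$ (the infimum over $R$-successors of $z$ collapses to the value at $z$ itself). On the truth-theoretic side, because $S$ is a fixed point we have $\T\corn{\trre{\bullet}(\psi)}\in S \Leftrightarrow \trre{\bullet}(\psi)\in S$. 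Chaining these with the inductive hypothesis for $\psi$ yields both directions: $\mc{M},z\Vdash_e\Box\psi \Leftrightarrow \mc{M},z\Vdash_e\psi \Leftrightarrow \trre{\bullet}(\psi)\in S \Leftrightarrow \trre{\bullet}(\Box\psi)\in S$. The negated case $\neg\Box\psi$ is where care is needed: $\trre{\bullet}(\neg\Box\psi)=\neg\T\corn{\trre{\bullet}(\psi)}$, and I would use the {\sf KF}-clause for $\T{\rm ng}$ applied to a truth-ascription (axioms $\kf3$, $\kf8$, $\kf9$) together with the idiosyncratic collapse of the box at $z$ to reduce $\mc{M},z\Vdash_e\neg\Box\psi$ to $\mc{M},z\Vdash_e\neg\psi$, and then invoke the inductive hypothesis. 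I expect the delicate point throughout to be keeping the nonclassical truth-conditions (where ``not true'' and ``false'' diverge) aligned with the exact membership conditions in the fixed point $S$, which is precisely why the two clauses (i) and (ii) must be carried in tandem and why the fixed-point property of $S$ must be invoked at every compositional and modal step.
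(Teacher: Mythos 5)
Your overall strategy — simultaneous induction on positive complexity, base cases via Lemma \ref{lem:modfxp}, compositional and modal steps via the fixed-point property of $S$ and the idiosyncratic collapse of $\Box$ at $z$ — is exactly the paper's, and your treatment of $\Box\psi$ and $\neg\Box\psi$ is fine. But there are two problems in the base cases, which is where all the real work of this lemma lives. First, your stated membership condition for the positive atom is wrong as written: $\trre{\bullet}(p_j)=\tau_{2j}\wedge\neg\tau_{2j+1}$ lies in the fixed point $S$ iff \emph{both} conjuncts do, i.e.\ iff $\#\tau_{2j}\in S$ \emph{and} $\#\neg\tau_{2j+1}\in S$ — not ``$\#\neg\tau_{2j+1}\notin S$''. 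With the correct condition, clauses (i) and (iv) of Lemma \ref{lem:modfxp} do give the equivalence with $\mc{M},z\Vdash_e p_j$; with the condition you wrote, they give something else entirely. Since you then cite clause (iv), which characterizes $\#\neg\tau_{2j+1}\in S$, this is presumably a slip, but in a lemma whose entire point is that ``the negation is in $S$'' and ``the sentence is not in $S$'' come apart, it is not a harmless one.

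Second, and more substantively, the negated-atom case of claim (ii) is \emph{not} ``handled analogously'': it is the one place in the whole proof where the faithfulness of the valuation must be invoked, and your sketch never mentions it. To show that $\mc{M},z\not\Vdash_e\neg p_j$ implies $\#\neg(\tau_{2j}\wedge\neg\tau_{2j+1})\notin S$, you must rule out both disjuncts $\#\neg\tau_{2j}\in S$ and $\#\tau_{2j+1}\in S$. The first is immediate from clause (ii) of Lemma \ref{lem:modfxp}, but the second requires a case distinction: if $\mc{M},z\not\Vdash_e p_j$, clause (iii) settles it; if instead $\mc{M},z\Vdash_e p_j$, then (together with $\mc{M},z\not\Vdash_e\neg p_j$) $V_z(p_j)=1$, and only \emph{faithfulness} forces $V_w(p_j)=1$, hence $\mc{M},w\not\Vdash_e\neg p_j$, so that clause (iii) again yields $\#\tau_{2j+1}\notin S$. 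Without faithfulness the claim is simply false — one could have $V_w(p_j)=0$ while $V_z(p_j)=1$, putting $\#\tau_{2j+1}$ into $S$ and hence $\trre{\bullet}(\neg p_j)$ into $S$ even though $\mc{M},z\not\Vdash_e\neg p_j$. This is precisely the case the paper's proof spends most of its space on, and it is the reason the lemma is stated only for faithful models.
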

	\begin{proof}Both cases are proved by an induction on the positive complexity of $\vphi$. We discuss the base cases, the remaining cases are easily obtained by induction hypothesis. We start with item (i). Suppose $\vphi= p_j$ for some $j\in\omega$ and $\mc{M},z\Vdash_e \vphi$. Then, by Lemma \ref{lem:modfxp}(i), $\#\tau_{2j}\in S$ and, by (iv), $\#\neg\tau_{2j+1}\in S$. Since $S$ is a fixed point of $\Phi$, this implies that $\#(\tau_{2j}\wedge\neg\tau_{2j+1})\in S$, i.e., $\#p_j^\bullet\in S$. Similarly, if $\vphi=\neg p_j$ for some $j\in\omega$ and $\mc{M},z\Vdash_e \vphi$, then  by Lemma \ref{lem:modfxp}(ii) $\#\neg\tau_{2j}\in S$. From this we may conclude that $\#(\neg\tau_{2j}\vee\neg\neg\tau_{2j+1})\in S$, that is $\#\neg(\tau_{2j}\wedge\neg\tau_{2j+1})\in S$. The latter is just $\#\trre{\bullet}(\neg p_j)\in S$.
	
	For item (ii) we assume $\vphi= p_j$ for some $j\in\omega$ and $\mc{M},z\not\Vdash_e \vphi$. Then by  Lemma \ref{lem:modfxp} (iv) $\#\neg\tau_{2j+1}\not\in S$ which implies that $\#(\tau_{2j}\wedge\neg\tau_{2j+1})\not\in S$, i.e., $\#\trre{\bullet}(p_j)\not\in S$. We now assume $\vphi=\neg p_j$ for some $j\in\omega$ and $\mc{M},z\not\Vdash_e \vphi$. By Lemma \ref{lem:modfxp} (ii) we infer $\#\neg\tau_{2j}\not\in S$. Now, we distinguish between two cases: in the first case $\mc{M},z\not\Vdash_ep_j$ and by (iii) $\#\tau_{2j+1}\not\in S$. Alternatively, $\mc{M},z\Vdash_ep_j$ but then, since we are working in a faithful model $\mc{M},w\Vdash_ep_j$, i.e., $\mc{M},w\not\Vdash_e\neg p_j$ and again by (iii) it follows that $\#\tau_{2j+1}\not\in S$. We can conclude that $\#\neg(\tau_{2j}\wedge\neg\tau_{2j+1})\not\in S$. But the latter is just $\#\trre{\bullet}(\neg p_j)\not\in S$.
	
	\end{proof}

	We can then establish the main lemma to the truth-completeness of $\kf$. 
	
	\begin{lem}[Main Lemma]\label{lem:maintc}
		Let $\mc{M}$, $S$ and $e$ be as in Lemma \ref{lem:modfxp} and $\bullet$ be the {\sc Witness Realization}. Then, for all $\vphi\in \lbox$, $\mc{M},w\Vdash_e \vphi$ only if $(\mc{N},S)\vDash\trre{\bullet}(\vphi)$. 
	\end{lem}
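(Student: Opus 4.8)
The plan is to establish the Main Lemma by strengthening it to the biconditional
\[
\mc{M},w\Vdash_e \vphi \quad\Longleftrightarrow\quad (\mc{N},S)\vDash \trre{\bullet}(\vphi),
\]
proved by induction on the complexity of $\vphi$. The strengthening costs nothing and makes the negation step trivial: since $w$ is a \emph{classical} world, evaluation at $w$ is two-valued and commutes with all Boolean connectives, exactly as classical satisfaction $(\mc{N},S)\vDash\cdot$ does on the translated side. Two facts will be used throughout. First, because $\T$ is interpreted by $S$ in $(\mc{N},S)$, for every $\lt$-sentence $\sigma$ we have $(\mc{N},S)\vDash \T\corn{\sigma}$ iff $\#\sigma\in S$. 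Second, the defining property of the truth-tellers, $\T\corn{\tau_i}\lra \tau_i$, holds in $(\mc{N},S)$ (it is arithmetically provable, hence independent of $S$), so $(\mc{N},S)\vDash \tau_i$ iff $\#\tau_i\in S$. Finally, since the frame is single-rooted and idiosyncratic, {\sc functionality} gives $w$ a unique successor $z\in Z$, whence by Definition \ref{def:trumim}(ii) we have $\mc{M},w\Vdash_e \Box\psi$ iff $\mc{M},z\Vdash_e\psi$.

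The crux is the atomic case $\vphi=p_j$, where $\trre{\bullet}(p_j)=p_j^\bullet=\tau_{2j}\wedge\neg\tau_{2j+1}$. Writing $A$ for $\mc{M},w\Vdash_e p_j$ and $B$ for $\mc{M},z\Vdash_e p_j$, the two facts above turn the right-hand side into a membership statement: $(\mc{N},S)\vDash p_j^\bullet$ iff $\#\tau_{2j}\in S$ and $\#\tau_{2j+1}\notin S$. By Lemma \ref{lem:modfxp}(i), $\#\tau_{2j}\in S$ iff $A\vee B$; by Lemma \ref{lem:modfxp}(iii) together with the classicality of $w$ (so that $\mc{M},w\Vdash_e\neg p_j$ is just $\neg A$), $\#\tau_{2j+1}\in S$ iff $\neg A\wedge B$. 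Hence $(\mc{N},S)\vDash p_j^\bullet$ iff $(A\vee B)\wedge (A\vee\neg B)$, which simplifies to $A$, i.e.\ to $\mc{M},w\Vdash_e p_j$. This settles the biconditional at atoms, and the constants are immediate from $\trre{\bullet}(\top)=(0=0)$ and $\trre{\bullet}(\bot)=(0=1)$.

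For the induction step, the connective cases $\neg,\wedge,\vee$ follow at once from the induction hypothesis, using that both $\mc{M},w\Vdash_e\cdot$ and $(\mc{N},S)\vDash\cdot$ are classical valuations and that $\trre{\bullet}$ commutes with the connectives; for instance $\mc{M},w\Vdash_e\neg\psi$ iff $\mc{M},w\not\Vdash_e\psi$ iff, by the induction hypothesis, $(\mc{N},S)\not\vDash\trre{\bullet}(\psi)$ iff $(\mc{N},S)\vDash\neg\trre{\bullet}(\psi)=\trre{\bullet}(\neg\psi)$. The modal case $\vphi=\Box\psi$ is where the nonclassical world enters, and it is handled entirely by Lemma \ref{lem:extnrp}: $\mc{M},w\Vdash_e\Box\psi$ iff $\mc{M},z\Vdash_e\psi$ iff, by Lemma \ref{lem:extnrp}(i)--(ii), $\trre{\bullet}(\psi)\in S$ iff $(\mc{N},S)\vDash\T\corn{\trre{\bullet}(\psi)}=\trre{\bullet}(\Box\psi)$. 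Taking the left-to-right halves of the biconditional then yields the Main Lemma. I expect the atomic case to be the only genuine obstacle: there clauses (i) and (iii) of Lemma \ref{lem:modfxp} must be combined with the classicality of $w$ so that the pair of truth-tellers coding $p_j$ reproduces exactly the classical value $A$ (faithfulness of $\mc{M}$ enters only indirectly, through Lemma \ref{lem:extnrp} in the modal case); every remaining case is either bookkeeping with classical connectives or a direct appeal to Lemma \ref{lem:extnrp}.
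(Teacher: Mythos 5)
Your proof is correct and follows essentially the same route as the paper's: the atomic case is settled by clauses (i) and (iii) of Lemma \ref{lem:modfxp} together with the two-valuedness of the classical root $w$, and the modal case is delegated entirely to Lemma \ref{lem:extnrp}. The only difference is organizational --- the paper runs a one-directional induction on \emph{positive} complexity, treating $\neg p_j$ and $\neg\Box\psi$ as separate cases, whereas you prove the biconditional by ordinary induction and let the classicality of $w$ absorb the negation step; your computation $(A\vee B)\wedge(A\vee\neg B)\equiv A$ compresses the paper's case analysis for $p_j$ and $\neg p_j$ into one line, and the two versions are interderivable.
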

	\begin{proof}
		The proof is again by induction on the positive complexity of $\vphi$. We cover the base cases and the cases of the modal operator. The induction step for the remaining operators and quantifiers is immediate by the properties of {\sf KF}-models.  We assume $\vphi= p_j$ and $\mc{M},w\Vdash_e \vphi$. We know by Lemma \ref{lem:modfxp} (i) that $\#\tau_{2j}\in S$ and by (iii) that $\#\tau_{2j+1}\not\in S$. By the properties of truth tellers this implies $(\mc{N},S)\vDash\trre{\bullet}(p_j)$.
		
		We now assume $\vphi= \neg p_j$ and $\mc{M},w\Vdash_e \vphi$. We distinguish between case (a) $\mc{M},z\not\Vdash_e p_j$ and case (b) where $\mc{M},z\Vdash_e p_j$. In case (a) we can infer Lemma \ref{lem:modfxp} (i) that $\#\tau_{2j}\not\in S$, which suffices to show that $(\mc{N},S)\vDash\trre{\bullet}(\neg p_j)$. In case (b) we infer by \ref{lem:modfxp} (iii) that $\#\tau_{2j+1}\in S$, which again suffices to show that $(\mc{N},S)\vDash\trre{\bullet}(\neg p_j)$.
		
		Let $\vphi=\Box\psi$ and $\mc{M},w\Vdash_e\vphi$. We know that $\mc{M},w\Vdash_e\vphi$ if and only if $\mc{M},z\Vdash_e\psi$. But from $\mc{M},z\Vdash_e\psi$ we infer by Lemma \ref{lem:extnrp} that $\#\trre{\bullet}(\psi)\in S$. Thus $(\mc{N},S)\vDash \T\gn{\trre{\bullet}(\psi)}$, which by Definition \ref{def:trint} is just $(\mc{N},S)\vDash\trre{\bullet}(\vphi)$.
		
		Finally, let $\vphi=\neg\Box\psi$ and $\mc{M},w\Vdash_e\vphi$. We know that $\mc{M},w\Vdash_e\vphi$ if and only if $\mc{M},z\not\Vdash_e\psi$. But from $\mc{M},z\not\Vdash_e\psi$ we infer by Lemma \ref{lem:extnrp} that $\#\trre{\bullet}(\psi)\not\in S$. Thus $(\mc{N},S)\vDash\neg \T\gn{\trre{\bullet}(\psi)}$, which by Definition \ref{def:trint} is just $(\mc{N},S)\vDash\trre{\bullet}(\vphi)$.
		
		\end{proof}
	
	We can now prove the truth-completeness of {\sf BM}.
	
\begin{proof}[Proof of Lemma \ref{lem:bmtcomp}]
Assume ${\sf BM}\not\vdash \vphi$. Then by Corollaries \ref{cor:ademocfaith} and \ref{cor:srprop} we know that there is a mixed, faithful single-rooted idiosyncratic model $\mc{M}$ such that at its root $w$, $\mc{M},w\forlo{fde}\neg\vphi$. By Lemma \ref{lem:modfxp} we then choose an appropriate fixed-point model $(\mc{N},S)$ of $\kf$ -- e.g.~ a fixed-point model based on $\nat$ -- such that by the Main Lemma, i.e.~Lemma \ref{lem:maintc}, $(\mc{N},S)\models\trre{\bullet}(\neg\vphi)$, where $\bullet$ is the {\sc Witness Realization}. The latter implies ${\sf KF}\nvdash\trre{\bullet}(\vphi)$ and hence that there is truth-realization $\star$ such that ${\sf KF}\nvdash\trre{\star}(\vphi)$.\end{proof}
	
	\begin{proof}[Proof of theorem \ref{thm:kfmain}]By Lemma \ref{lem:bmsound} and Lemma \ref{lem:bmtcomp}.
		%The soundness direction is proved by induction on the length of the proof in ${\sf BM}$. In particular, it is straigthforward to verify that the translations of the axioms fo ${\sf BM}$ are axioms of ${\sf KF}$. For the completeness direction, assume that ${\sf BM}\nvdash \vphi$ some $\vphi\in \lbox$. By Proposition \ref{prop:ademoc}, there is a faithful model $\mc{M}$ based on a mixed idiosyncratic frame such that $\mc{M},w\forlo{fde} \neg \vphi$, for $w$ a classical world in the model. We can safely assume that $W=\{w\}$. By Lemma \ref{lem:modfxp}, we construct a fixed point $S$ of $\Phi$ satisfying poperties (i)-(iv) in the Lemma. Therefore, $(\nat,S)$ satisfies $\kf$, and $(\nat,S)\vDash \neg \trre{\star}(\vphi)$. Therefore, $\kf\nvdash \vphi$. 
	\end{proof}
	
	\begin{cor}\label{cor:cncmnr}For all $\vphi\in{\rm Sent}_{\lbox}$
	\begin{align*}
	\tag{{\sf ks3}} &{\sf M}\vdash \vphi\text{ if and only if for all realizations }\star({\sf KF+CM\vee CN}\vdash \trre{\star}(\vphi));\\
	\tag{{\sf k3}} &\mathsf{M^n}\vdash \vphi\text{ if and only if for all realizations }\star({\sf KF+CN}\vdash \trre{\star}(\vphi));\\
	\tag{{\sf lp}} &\mathsf{M^b}\vdash \vphi\text{ if and only if for all realizations }\star({\sf KF+CM}\vdash \trre{\star}(\vphi)).
	 \end{align*}
	 \begin{proof}The soundness of {\sf M} (${\sf M^n}$, ${\sf M^b}$) with respect to ${\sf KF+CM\vee CN}$ (${\sf KF+CN}$, ${\sf KF+CM}$) follows again  by an induction on the length of a proof in {\sf M} (${\sf M^n},{\sf M^b}$). For the converse direction, i.e.~the truth-completeness, we adopt the strategy employed in proving Lemma \ref{lem:bmtcomp}: we assume that some formula $\vphi$ is not provable in the modal logic at stake. We then apply the modal completeness theorem to find a suitable faithful, mixed single rooted idiosyncratic model that falsifies $\vphi$. Then using Lemma \ref{lem:modfxp} we can find suitable fixed-point models of ${\sf KF+CM\vee CN}$ (${\sf KF+CN}$, ${\sf KF+CM}$) in which the truth-interpretation based on our {\sc Witness Realization} of $\vphi$ is false.\end{proof}
	
		%Let $(L,\Sigma)$ be one of the following pairs: $({\sf M},{\sf KF+CM\vee CP}), ({\sf M^B},{\sf KF+CM}), ({\sf M^n},{\sf KF+CN})$. Then we have: for all $\vphi\in \lbox$, $M\vdash \vphi$ if and only if for all realizations $\star$, $T\vdash \trre{\star}(\vphi)$. 
	\end{cor}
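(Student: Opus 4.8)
The plan is to follow the two-part template of Theorem~\ref{thm:kfmain}, establishing for each of the three equivalences a soundness half and a truth-completeness half, reusing the machinery already built for {\sf BM} and {\sf KF}.

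For soundness I would argue by induction on the length of a derivation in the relevant modal logic. As {\sf M}, ${\sf M^n}$, and ${\sf M^b}$ all extend {\sf BM}, every inductive step arising from a {\sf BM}-axiom or rule is already discharged by Lemma~\ref{lem:bmsound}; the only genuinely new obligation is to check that the extra axioms translate, under an arbitrary realization $\star$, into theorems of the corresponding truth theory. Writing $A:=\trre{\star}(\vphi)$ and $B:=\trre{\star}(\psi)$, the axiom ({\sf D}) translates to $\neg\T\gn{\neg A}\vee\neg\T\gn{A}$, a consequence of the consistency axiom {\sf CN}; dually ($\mathsf{D_c}$) becomes $\T\gn{\neg A}\vee\T\gn{A}$, a consequence of {\sf CM}; and ($\mathsf{DD_c}$) becomes $(\neg\T\gn{\neg A}\vee\neg\T\gn{A})\vee(\T\gn{\neg B}\vee\T\gn{B})$, which follows from the disjunctive axiom ${\sf CM}\vee{\sf CN}$ according to which disjunct holds. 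These verifications are routine given the shape of the axioms.

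For truth-completeness I would argue contrapositively, transplanting the argument of Lemma~\ref{lem:bmtcomp}. Suppose the modal logic does not prove $\vphi$. Corollaries~\ref{cor:ademocfaith} and~\ref{cor:srprop} then provide a single-rooted, faithful, mixed idiosyncratic countermodel $\mc{M}$ of the semantic type attached to the logic --- consistent for ${\sf M^n}$ (scheme {\sf k3}), complete for ${\sf M^b}$ (scheme {\sf lp}), symmetric for {\sf M} (scheme {\sf ks3}) --- with $\mc{M},w\nVdash\vphi$ at its classical root. Lemma~\ref{lem:modfxp} supplies a fixed-point model $(\mc{N},S)$ coding the valuation of $\mc{M}$, and the Main Lemma~\ref{lem:maintc} transfers the countermodel across the truth-interpretation, giving $(\mc{N},S)\vDash\trre{\bullet}(\neg\vphi)$ for the {\sc Witness Realization} $\bullet$. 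Hence $\trre{\bullet}(\vphi)$ is not provable in the relevant {\sf KF}-extension, which is what contraposition requires.

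The delicate point --- and the step I expect to demand the most care --- is to ensure that $(\mc{N},S)$ is genuinely a model of the \emph{intended} axiomatic extension, which hinges on matching the semantic type of $\mc{M}$ to the consistency/completeness type of the fixed point $S$. I would use that a consistent fixed point validates {\sf CN} and a complete fixed point validates {\sf CM}, so that consistent fixed points are models of ${\sf KF+CN}$ and complete ones of ${\sf KF+CM}$. For ${\sf M^n}$ and ${\sf M^b}$ this is immediate, since Lemma~\ref{lem:modfxp} returns a consistent, respectively complete, fixed point. For {\sf M} the subtlety is that the unique nonclassical world $z$ of a symmetric single-rooted model is of exactly one of the two types; invoking the flexibility clause of Lemma~\ref{lem:modfxp} for {\sf ks3}, I would choose a consistent fixed point when $z$ is of {\sf k3}-type and a complete one when $z$ is of {\sf lp}-type. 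In either case $(\mc{N},S)$ satisfies ${\sf CM}\vee{\sf CN}$, so it is a model of ${\sf KF}+{\sf CM}\vee{\sf CN}$, completing the {\sf ks3} case.
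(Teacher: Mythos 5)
Your proposal is correct and follows essentially the same route as the paper: soundness by induction on proof length (with the new axioms ({\sf D}), ($\mathsf{D_c}$), ($\mathsf{DD_c}$) translating directly into instances of {\sf CN}, {\sf CM}, ${\sf CM}\vee{\sf CN}$), and completeness by transplanting the argument of Lemma~\ref{lem:bmtcomp} via Corollaries~\ref{cor:ademocfaith} and~\ref{cor:srprop}, Lemma~\ref{lem:modfxp}, and the Main Lemma with the {\sc Witness Realization}. You in fact supply more detail than the paper's sketch, in particular the correct use of the final clause of Lemma~\ref{lem:modfxp} to match the consistency/completeness type of the fixed point to the semantic type of the countermodel in the {\sf ks3} case.
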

	
	Before moving to the next section, in which we consider strengthenings of some of the claims just obtained, we notice that Lemma \ref{lem:extnrp} provides also a direct proof of the truth-theoretic completeness of the modal logics $S_\Box$ with the respects to the corresponding -- in the sense of the underlying non-classical logics -- non-classical axiomatizations of Kripkean truth in the style of ${\sf PKF}$ from \cite{haho06}.

\subsection{The modal logics of $\wkf$, $\dt$, and of Kripke's fixed points}\label{sec:mdfptr}

In this section we determine the modal logics of the truth-theories based on Weak Kleene. In doing so, we will employ an alternative arguments to the one employed in the previous subsection, that will also deliver alternative truth-completeness proofs for $\kfcons$ and $\kfcomp$. However, such alternative strategies are not different proofs of the same results, but in fact yield much stronger claims, namely they determine the modal logic of \emph{all consistent extensions} (not necessarily recursively enumerable) of the truth systems considered. For example, \cite{bur14} proposed an extension of ${\sf KF}$ in $\lt$ with a minimality schema -- called ${\sf KF}\mu$ --, that was intended to axiomatize Kripke's minimal fixed point model. Our result will show that ${\sf M^n}$ is the modal logic not only of ${\sf KF}\mu$, but also to stronger extensions of ${\sf KF}$ such as the set of sentences of $\lt$ satisfied in the model $(\nat,\mc{I}_\Phi)$, where $\mc{I}_\Phi$ is the minimal fixed point of $\Phi$. 

%we study the question whether one can find modal truth logics that are stable under arbitrary (consistent) extensions of their corresponding theory of truth\carlo{Something stronger about the generality of the result: KF$\mu$ and further extensions}. 

%	\begin{enumeratei}
%	%
%		\item $2^{\aleph_0}$-many theories of truth whose modal logic is ${\sf M}$. 
%		%
%		\item But not \emph{all} the extensions of ${\sf KF}$ have ${\sf M}$ as modal logic. 
%		%
%		\item No refutation proofs. 
%	%
%	\end{enumeratei}

	Theorem \ref{thm:kfmain} establishes that ${\sf BM}$ is the modal logic of ${\sf KF}$. However, ${\sf KF}$ is not the only first-order theory whose modal logic is ${\sf BM}$. 
	
	\begin{obse}
		There are $2^{\aleph_0}$ recursive, consistent, and mutually inconsistent extensions of ${\sf KF}$ in $\lt$ whose modal logic is ${\sf BM}$. 
	\end{obse}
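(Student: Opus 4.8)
The plan is to manufacture the required theories from the indeterminacy of truth-tellers recorded in Lemma~\ref{lem:ttfree}. First I would fix a partition $\omega = I \sqcup J$ into two disjoint infinite sets, with enumerations $I=\{i_0,i_1,\dots\}$ and $J=\{j_0,j_1,\dots\}$, and split the truth-tellers accordingly: the family $\{\tau_{i_k}\}_{k\in\omega}$ is reserved for the {\sc Witness Realization} of the previous subsection, while $\{\tau_{j_k}\}_{k\in\omega}$ is used to code arbitrary subsets. For each $A\subseteq\omega$ I then set
\[
  {\sf KF}_A \,:=\, {\sf KF} + \{\T\corn{\tau_{j_k}}\sth k\in A\} + \{\neg\T\corn{\tau_{j_k}}\sth k\notin A\}.
\]
Each ${\sf KF}_A$ extends ${\sf KF}$ in $\lt$, and its axiomatization is recursive exactly when $A$ is; ranging over all $A\in\mc{P}(\omega)$ furnishes the continuum of theories.

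Consistency, mutual inconsistency, and cardinality are then immediate. For consistency, given any $\mc{N}\vDash{\sf PA}$, Lemma~\ref{lem:ttfree} supplies a fixed point $S$ with $\#\tau_{j_k}\in S$ iff $k\in A$, whence $(\mc{N},S)\vDash{\sf KF}_A$. For mutual inconsistency, if $A\neq B$ I pick $k\in A\triangle B$; then one of ${\sf KF}_A,{\sf KF}_B$ proves $\T\corn{\tau_{j_k}}$ and the other proves $\neg\T\corn{\tau_{j_k}}$, so their union is inconsistent. Since $\mc{P}(\omega)$ has cardinality $2^{\aleph_0}$, this yields $2^{\aleph_0}$ pairwise inconsistent consistent extensions.

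The substantive part is that every ${\sf KF}_A$ retains ${\sf BM}$ as its modal logic. The soundness half --- if ${\sf BM}\vdash\vphi$ then ${\sf KF}_A\vdash\trre{\star}(\vphi)$ for all $\star$ --- is inherited verbatim from Lemma~\ref{lem:bmsound}, because ${\sf KF}_A\supseteq{\sf KF}$. For the completeness half I would re-run the proof of Lemma~\ref{lem:bmtcomp}, after relabelling the {\sc Witness Realization} so that it draws only on $\{\tau_{i_k}\}$ (a harmless change, as Lemma~\ref{lem:ttfree} is indifferent to which truth-tellers are used). Given ${\sf BM}\not\vdash\vphi$ and the falsifying single-rooted mixed model $\mc{M}$ from Corollaries~\ref{cor:ademocfaith} and~\ref{cor:srprop}, I would invoke Lemma~\ref{lem:modfxp} on the $I$-indexed truth-tellers while simultaneously prescribing $\#\tau_{j_k}\in S\Leftrightarrow k\in A$ on the $J$-side. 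The resulting $S$ satisfies ${\sf KF}_A$, and the Main Lemma~\ref{lem:maintc} still gives $(\mc{N},S)\vDash\trre{\bullet}(\neg\vphi)$, so ${\sf KF}_A\nvdash\trre{\bullet}(\vphi)$.

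The main obstacle is exactly this last step: securing a single fixed point that meets both sets of demands at once. This is where the disjointness of $I$ and $J$ does the work --- the constraints that Lemma~\ref{lem:modfxp} places on the $I$-truth-tellers and the coding constraints on the $J$-truth-tellers never overlap, so they assemble into one consistent membership pattern. Crucially, because we are dealing with ${\sf KF}$ (and not ${\sf KF}+{\sf CN}$ or ${\sf KF}+{\sf CM}$), the ${\sf fde}$ scheme imposes no consistency or completeness restriction on the fixed point, so Lemma~\ref{lem:ttfree} applies to the combined pattern without the $Y\cap Z=\emptyset$ or $Y\cup Z=\omega$ proviso, and the two prescriptions can indeed be realized together.
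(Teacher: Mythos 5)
Your construction is correct, but it takes a genuinely different route from the paper's. The paper manufactures the continuum of theories out of \emph{arithmetical} independence: starting from $\kf$ it grows the full binary tree whose children of a node $T$ are $T+\gamma_T$ and $T+\neg\gamma_T$ for $\gamma_T$ an arithmetical sentence undecidable in $T$, and then re-runs the strategy of Theorem \ref{thm:kfmain} on the resulting theories. You instead exploit \emph{truth-theoretic} independence, coding an arbitrary $A\subseteq\omega$ into the truth values of a reserved family of truth-tellers. Each route has one obligation the other avoids. In the paper's version the added axioms are $\T$-free, so they cannot interfere with the {\sc Witness Realization}; the price is that arithmetically unsound branches have only nonstandard models, so the countermodel $(\mc{N},S)$ in the completeness step must be nonstandard (the paper flags exactly this). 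In your version every ${\sf KF}_A$ has a fixed-point model over $\nat$, so that subtlety disappears; the price is that the new axioms do mention $\T$, and you must check that the coding constraints cohere with those imposed by Lemma \ref{lem:modfxp} --- which you handle correctly by splitting the truth-tellers into two disjoint infinite families and observing that Lemma \ref{lem:ttfree}, in the unrestricted {\sf fde} case, realizes any combined membership pattern in a single fixed point. One caveat applies equally to both arguments: since there are only countably many recursively axiomatized theories, at most countably many of the $2^{\aleph_0}$ extensions can literally be recursive (you concede as much when you note that ${\sf KF}_A$ is recursive exactly when $A$ is); the defensible reading of the Observation is that there are continuum-many consistent, pairwise inconsistent extensions whose modal logic is ${\sf BM}$, infinitely many of them recursive, and your proof delivers that just as the paper's does.
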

	\begin{proof}
		By G\"odel's incompleteness theorem, one constructs copy of the full binary tree starting with ${\sf KF}$: each node, say labelled with $T$, has two children labelled with $T+\gamma_{T}$ and $T+\neg\gamma_T$, where $\gamma_T$ is an arithmetical sentence undecidable in $T$. Each node will then be consistent (assuming ${\sf KF}$ is), and inconsistent with the other non-root nodes. 
		
		The strategy leading to theorem \ref{thm:kfmain} can then be employed to show that ${\sf BM}$ is the modal logic of the resulting theories. It is worth noting that for arithmetically unsound theories the required countermodel in the proof of theorem \ref{thm:kfmain} can only be nonstandard. 
	\end{proof}
	
	At the same time, Corollary \ref{cor:cncmnr} shows the existence of consistent, recursively enumerable extensions of ${\sf KF}$ in $\lt$ whose modal logic is not ${\sf BM}$. A natural question is then whether there are extensions of ${\sf KF}$ in $\lt$ whose modal logic is stable under further  extensions. As anticipated, the answer turns out to be positive: to establish this, we present an alternative proof of Corollary \ref{cor:cncmnr}. As a consequence, we will obtain a truth completeness proof for $\wkf$ and $\dt$ which will also be stable under consistent extensions, \emph{including the $\lt$-sentences true in fixed-point models}. 
	
	\begin{lem}\label{lem:mainrp}
		\hfill
		\begin{enumeratei}
			\item Let $\mc{M}=(\{w\},\{z\},R,V)$ be a mixed, idiosyncratic, faithful, single-rooted, consistent model of $\lbox$. Then:
				\begin{enumerate}[label=\alph*.]
					\item For all $\vphi\in \lbox$, if $\mc{M},w\forlo{k3}\vphi$, then there is some truth-realization $\star$ such that $\kfcons\vdash \trre{\star}(\vphi)$. 
					\item For all $\vphi\in \lbox$, if $\mc{M},w\forlo{b3}\vphi$, then there is some truth-realization $\star$ such that $\wkf\vdash \trre{\star}(\vphi)$. 
					\item For all $\vphi\in \lbox^\fcond$, if $\mc{M},w\forlo{f3}\vphi$, then there is some truth-realization $\star$ such that $\dt\vdash \trre{\star}(\vphi)$.
				\end{enumerate}
			\item Let $\mc{M}=(\{w\},\{z\},R,V)$ be a mixed, idiosyncratic, faithful, single-rooted, complete model of $\lbox$. Then for all $\vphi\in \lbox$, if ${\sf M^b},w\forlo{lp}\vphi$, then there is some truth-realization $\star$ such that $\kfcomp\vdash \trre{\star}(\vphi)$. 
		\end{enumeratei}
	\end{lem}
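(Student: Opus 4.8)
The plan is to handle all four statements by a single device. Fix the model $\mc{M}=(\{w\},\{z\},R,V)$ and construct \emph{one} truth-realization $\star$, depending only on $\mc{M}$, that \emph{provably} mirrors the valuation $V$ at both the classical world $w$ and the nonclassical world $z$. I then prove, by induction on the positive complexity of $\vphi$, an invariant asserting that the value of $\vphi$ at $w$ and at $z$ is tracked by provable facts about $\trre{\star}(\vphi)$ in the relevant theory: the $w$-value is reflected by provability or refutability of $\trre{\star}(\vphi)$, while the $z$-value is reflected by provable statements about $\T\corn{\trre{\star}(\vphi)}$ and $\T\corn{\neg\trre{\star}(\vphi)}$ (value $1$ gives $\vdash\T\corn{\trre{\star}(\vphi)}\wedge\neg\T\corn{\neg\trre{\star}(\vphi)}$, value $0$ the dual, value $\neither$ gives $\vdash\neg\T\corn{\trre{\star}(\vphi)}\wedge\neg\T\corn{\neg\trre{\star}(\vphi)}$, and value $\both$ gives $\vdash\T\corn{\trre{\star}(\vphi)}\wedge\T\corn{\neg\trre{\star}(\vphi)}$). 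Each displayed implication of the Lemma, e.g.\ $\mc{M},w\forlo{k3}\vphi\Rightarrow\kfcons\vdash\trre{\star}(\vphi)$, is then just the $w$-clause of this invariant. Crucially, because the conclusion is provability, it transfers to every consistent extension of the theory.

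I would define $\star$ on atoms by reading off $V$. If $V_z(p)=1$ set $p^\star:=(0=0)$; if $V_z(p)=0$ set $p^\star:=(0=1)$. By faithfulness $V_w(p)$ agrees with $V_z(p)$ in these two cases, so the classical status of $p^\star$ is correct, and $\kf1,\kf2$ make these sentences provably determinately true, respectively false. For the genuinely nonclassical atoms I use Liar sentences: by the diagonal Lemma fix a family $\{\lambda_p\}$ of mutually independent Liars, each satisfying Lemma~\ref{lem:liarkf}. In the consistent cases put $p^\star:=\lambda_p$ when $V_z(p)=\neither$ and $V_w(p)=1$, and $p^\star:=\neg\lambda_p$ when $V_z(p)=\neither$ and $V_w(p)=0$; in the complete case use the same two sentences for the value $\both$, the choice again dictated by $V_w(p)$. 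This is exactly what Lemma~\ref{lem:liarkf} provides: over $\kfcons$ (and over $\wkf,\dt$) the Liar is provably true yet provably undetermined, $\vdash\lambda_p\wedge\neg\T\corn{\lambda_p}\wedge\neg\T\corn{\neg\lambda_p}$, realizing value $\neither$, whereas over $\kfcomp$ it is provably overdetermined, $\vdash\T\corn{\lambda_p}\wedge\T\corn{\neg\lambda_p}$ (Lemma~\ref{lem:liarkf}(iii) with the $\kf$-axioms), realizing value $\both$.

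The propositional induction steps are driven by the compositional axioms: $\kf3$ for negation, $\kf4,\kf5$ for conjunction and its negation, and the De~Morgan definition of disjunction, which push the truth predicate through the connectives so that $\T\corn{\cdot}$ matches the lattice operations ${\rm inf}_\preceq,{\rm sup}_\preceq$. The consistency axiom ${\sf CN}$ supplies the $\neg\T\corn{\neg\trre{\star}(\vphi)}$-clauses for value $1$ (from $\vdash\T\corn{\trre{\star}(\vphi)}$) in the $\kfcons$ case, and the completeness axiom ${\sf CM}$ plays the dual role in the $\kfcomp$ case. The decisive step is $\vphi=\Box\psi$: since $w$ sees only $z$, by Definition~\ref{def:trumim} $\mc{M},w\Vdash_s\Box\psi$ iff $\mc{M},z\Vdash_s\psi$, and $\trre{\star}(\Box\psi)=\T\corn{\trre{\star}(\psi)}$, so the $z$-clause of the invariant for $\psi$ yields precisely the required $w$-fact; dually, for $\neg\Box\psi$ one needs $\vdash\neg\T\corn{\trre{\star}(\psi)}$, which comes from the value-$0$ or value-$\neither$ subclause of the $z$-invariant (the very clauses that are seeded at the base by ${\sf CN}$, respectively by the provable undeterminedness of the Liars).

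For $\wkf$ (scheme ${\sf b3}$) the same invariant works, but conjunction and the universal quantifier must now respect the \emph{infectiousness} of $\neither$ under ${\rm min}_{\curlyeqprec}$; here I use $\wkf\wedge$ and $\wkf\forall$, whose determinateness side-conditions ${\rm D}(\cdot)$ encode exactly that an undetermined component renders the whole negated conjunction or quantification undetermined. Case (i)(c) for $\dt$ adds the conditional step, handled by $\dt\!\fcond$ and $\dt\,\neg\!\fcond$ together with the $\fthr$-clause for $\fcond$. Case (ii) is the complete, paraconsistent dual, with the overdetermined Liars as witnesses and ${\sf CM}$ in place of ${\sf CN}$. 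I expect the main obstacle to be designing and verifying this two-sided invariant in the Weak-Kleene and complete cases: for ${\sf b3}$ one must show that the determinateness predicate ${\rm D}$ is itself provably tracked so that infectiousness propagates, and for $\kfcomp$, since completeness gives no consistency, every ``$\neg\T$'' fact used in the $\Box$- and negation-steps must be carried explicitly through the induction (seeded by $\vdash\neg\T\corn{0=1}$ at the base) rather than derived on the fly. Getting these base clauses and the independence of the $\lambda_p$ exactly right is what makes the argument close.
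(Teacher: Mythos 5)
Your proposal is correct and follows essentially the same route as the paper: your realization $\star$ is exactly the paper's $\circ$ (resp.\ $\dagger$ in the complete case), and your ``two-sided invariant'' is precisely the paper's decomposition into Lemma~\ref{lem:intre} (the $z$-side, tracking $\T\corn{\trre{\circ}(\vphi)}$ and $\T\corn{\trre{\circ}(\neg\vphi)}$) followed by an induction for the $w$-side, merely bundled into one simultaneous induction. The only cosmetic difference is that you use a family of ``mutually independent'' Liars $\lambda_p$ where a single $\lambda$ suffices, since the argument only needs the provable truth-value behaviour of Lemma~\ref{lem:liarkf}, not any independence between the witnesses.
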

	
	Lemma \ref{lem:mainrp} relies in turn on the following, crucial Lemmata, which are stronger versions of Lemma \ref{lem:extnrp}.

	\begin{lem}\label{lem:intre}
		Let $\mc{M}=(\{w\},\{z\},R,V)$ be as in Lemma \ref{lem:mainrp}(i). Then there is a realization $\circ$ such that:
		\begin{enumeratei}
			\item for all $\vphi\in \lbox$,
				\begin{enumerate}[label=\alph*.]
					\item if $\mc{M},z\forlo{k3}\vphi$, then $\kfcons\vdash \T\corn{\trre{\circ}(\vphi)}$;
					\item if $\mc{M},z\not\forlo{k3}\vphi$, then $\kfcons\vdash \neg \T\corn{\trre{\circ}(\vphi)}$.
				\end{enumerate}
			\item for all $\vphi\in \lbox$,
				\begin{enumerate}[label=\alph*.]
					\item if $\mc{M},z\forlo{b3}\vphi$, then $\wkf\vdash \T\corn{\trre{\circ}(\vphi)}$;
					\item if $\mc{M},z\not\forlo{b3}\vphi$, then $\wkf\vdash \neg \T\corn{\trre{\circ}(\vphi)}$.
				\end{enumerate}
			\item for all for all $\vphi\in \lbox^\fcond$
			\begin{enumerate}[label=\alph*.]
					\item if $\mc{M},z\forlo{f3}\vphi$, then $\dt\vdash \T\corn{\trre{\circ}(\vphi)}$;
					\item if $\mc{M},z\not\forlo{f3}\vphi$, then $\dt\vdash \neg \T\corn{\trre{\circ}(\vphi)}$.
				\end{enumerate}
		\end{enumeratei}
	\end{lem}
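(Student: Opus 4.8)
The plan is to fix the realization $\circ$ once and for all by reading it off the valuation $V$ of the given single-rooted model $\mc{M}=(\{w\},\{z\},R,V)$, and then to prove all three items by one induction on the positive complexity of $\vphi$. Since the internal value of a sentence $\psi$ is witnessed jointly by the provability of $\T\corn{\psi}$ and of $\T\corn{\neg\psi}$, the two clauses (a) and (b) must be carried simultaneously, and at each step the induction hypothesis is applied to $\psi$ \emph{and} to its negation. I would define $\circ$ on atoms by
\[
p_j^\circ:=\begin{cases}
0=0,&\text{if }V_z(p_j)=1,\\
0=1,&\text{if }V_z(p_j)=0,\\
\lambda,&\text{if }V_z(p_j)=\neither\text{ and }V_w(p_j)=1,\\
\neg\lambda,&\text{if }V_z(p_j)=\neither\text{ and }V_w(p_j)=0,
\end{cases}
\]
where $\lambda$ is the Liar of Lemma \ref{lem:liarkf}. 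The guiding idea is that $0=0$ is provably internally true (by the equality axiom $\kf 1$), $0=1$ provably internally false ($\kf 2$), while $\lambda$ is provably \emph{neither} internally true nor internally false by Lemma \ref{lem:liarkf}(ii); using $\kf 3$ the same holds of $\neg\lambda$. This matches exactly the three values $1,0,\neither$ a consistent valuation can take at $z$, so the same $\circ$ serves items (i)--(iii), the only difference being whether we read values under $\forlo{k3}$, $\forlo{b3}$, or $\forlo{f3}$.

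With this choice the base cases $\vphi=p_j$, $\vphi=\neg p_j$ (and $\vphi=\top,\bot$) reduce directly to the provable facts above, via $\kf 1$--$\kf 3$ for the arithmetic atoms and the Liar lemma for the $\neither$-case. For the modal cases I would exploit idiosyncrasy: since $z$ sees only itself, $\Box\psi$ and $\psi$ receive the same value at $z$, so $\mc{M},z\forlo{s}\Box\psi$ iff $\mc{M},z\forlo{s}\psi$ (and likewise for $\neg\Box\psi$ and $\neg\psi$). Syntactically $\trre{\circ}(\Box\psi)=\T\corn{\trre{\circ}(\psi)}$, so the self-referential axioms $\kf 8$ and $\kf 9$, which yield $\T\corn{\T\corn{\chi}}\lra\T\corn{\chi}$ and, for sentences, $\T\corn{\neg\T\corn{\chi}}\lra\T\corn{\neg\chi}$, collapse the extra layer of $\T$ and let the induction hypothesis apply unchanged.

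The compositional cases are where the three logics diverge. For $\kfcons$ under $\sf k3$ the conjunction clauses $\kf 4$ and $\kf 5$ match the Strong Kleene infima over $\preceq$, disjunction being treated by its De Morgan definition; this part is routine. The genuine obstacle is the weak-Kleene ($\wkf$) and Feferman--Aczel ($\dt$) induction, where the value of a conjunction or conditional depends not only on the values of the components but on their \emph{determinedness}, and the axioms $\wkf\land$ and $\dt\fcond$, $\dt\neg\fcond$ carry an explicit determinedness predicate ${\rm D}$. The key point I would prove alongside the main induction is that, under $\circ$, ${\rm D}(\corn{\trre{\circ}(\psi)})$ is provable precisely when $\psi$ has a classical value at $z$ and $\neg{\rm D}(\corn{\trre{\circ}(\psi)})$ precisely when $\psi$ has value $\neither$; since ${\rm D}(\corn{\chi})$ abbreviates $\T\corn{\chi}\vee\T\corn{\neg\chi}$, this follows by applying clauses (a) and (b) to $\psi$ and to $\neg\psi$. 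With this determinedness bookkeeping in hand, the weak-Kleene table at $z$ (where an undefined conjunct forces an undefined conjunction) lines up exactly with the right-hand side of $\wkf\land$, and the $\sf f3$ conditional table with $\dt\fcond$, $\dt\neg\fcond$. I expect the careful tracking of these determinedness conditions, rather than any single hard inference, to be the main difficulty; everything else is a matter of reading each Kripke--Feferman clause as the internal analogue of the chosen evaluation scheme.
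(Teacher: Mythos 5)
Your proposal is correct and follows essentially the same route as the paper: the realization $\circ$ you define coincides with the paper's (its ``otherwise'' clause is exactly your $V_z(p)=0$ case, since the model is consistent), and the paper likewise proves (a) and (b) by a simultaneous induction on positive complexity, using Lemma \ref{lem:liarkf} for the atomic/Liar cases, idiosyncrasy plus $\kf 8$--$\kf 9$ for the modal cases, and the provability of ${\rm D}(\corn{\trre{\circ}(\psi)})$ versus its negation --- obtained by applying the induction hypothesis to $\psi$ and $\neg\psi$ --- to match the $\wkf\land$, $\dt\fcond$ and $\dt\neg\fcond$ axioms. No substantive differences.
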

	
	\begin{proof}
	 Let $\circ$ be the realization
		\[
				p^\circ=\begin{cases}
							0=0,&\text{if $V_z(p)=1$,}\\
							\lambda,&\text{if $V_w(p)=1$ and $V_z(p)={\sf n}$,}\\
							\neg\lambda,&\text{if $V_w(p)=0$ and $V_z(p)={\sf n}$},\\
							0=1,&\text{otherwise.}
						\end{cases}
			\]
	We verify in some detail (ii) and (iii), because (i) easily follows from the axioms of $\kfcons$ and Lemma \ref{lem:liarkf}. Both cases are inductions on the positive complexity of $\vphi$. 
	
	(ii) The noteworthy cases for a.~are the cases in which $\vphi$ is $p$ or $\neg p$, in which one employs the properties of liar sentences in $\wkf$, and the one in which $\vphi:=\neg(\psi\land \chi)$. Since $\mc{M},z\forlo{b3} \vphi$, there are three cases:
		\begin{itemize-}
			\item $\mc{M},z\forlo{b3} \neg\psi$ and $\mc{M},z\forlo{b3} \neg\chi$
			\item $\mc{M},z\forlo{b3} \psi$ and $\mc{M},z\forlo{b3} \neg\chi$
			\item $\mc{M},z\forlo{b3} \neg\psi$ and $\mc{M},z\forlo{b3} \chi$
		\end{itemize-} 
	By induction hypothesis, in all cases on obtains in $\wkf$ that ${\rm D}(\corn{\trre{\circ}(\psi)},\corn{\trre{\circ}(\chi)})$ and $\T\corn{\trre{\circ}(\neg\psi)}\vee\T\corn{\trre{\circ}(\neg\chi)}$. Therefore, $\wkf\vdash \T\corn{\trre{\circ}(\neg(\psi\land\chi))}$. 
	
	Symmetrically, b.'s atomic cases follow from the properties of the liar sentence in $\wkf$. For the crucial case of $\vphi:=\neg(\psi\land \chi)$,  the assumption yields two main cases: $\intbth{\vphi}{M}{z}={\sf n}$ or $\intbth{\vphi}{M}{z}={\sf 0}$. The latter case is readily obtained: by induction hypothesis, $\wkf$ proves $\T\corn{\trre{\circ}(\psi\land\chi)})$, and so $\neg\T\corn{\trre{\circ}(\neg(\psi\land\chi))}$. For the first case,
	\[
		\text{$\intbth{\psi}{M}{z}=\intbth{\neg\psi}{M}{z}={\sf n}$ or $\intbth{\chi}{M}{z}=\intbth{\neg\chi}{M}{z}={\sf n}$.}
	\]
	If $\intbth{\psi}{M}{z}={\sf n}$, the induction hypothesis entails that $\wkf\vdash \neg \T\corn{\trre{\circ}(\psi)}$ and $\wkf\vdash \neg \T\corn{\trre{\circ}(\neg\psi)}$. Therefore, 
	$\wkf\vdash \neg {\sf D}(\corn{\trre{\circ}(\psi)},\corn{\trre{\circ}(\chi)})$ and $\wkf\vdash \neg\T\corn{\trre{\circ}(\neg(\psi\land\chi))}$. 
	
	(iii) All cases are analogous to the proof of (ii) except of course the cases in which $\vphi=\psi\fcond\psi$ or $\vphi=\neg(\psi\fcond\chi)$, which we now consider. 
	
	a: if $\intfth{\psi\fcond\chi}{M}{z}={\sf 1}$, then either $\intfth{\psi}{M}{z}={\sf 0}$ or $\intfth{\psi}{M}{z}=\intfth{\chi}{M}{z}=1$. In either case, by induction hypothesis and ($\dt\fcond$), we have $\dt\vdash \T\corn{ \trre{\circ}(\psi)\fcond \trre{\circ}(\chi)}$, and therefore $\dt \vdash  \T\corn{ \trre{\circ}(\psi\fcond\chi)}$. If $\intfth{\psi\fcond\chi}{M}{z}={\sf 0}$, then $\intfth{\psi}{M}{z}=1$ and $\intfth{\chi}{M}{z}=0$. The induction hypothesis yields that $\dt\vdash \T\corn{\trre{\circ}(\psi)}\land \T\corn{\neg\trre{\circ}(\chi)}$, so the claim follows by ($\dt\!\neg\!\fcond$). 
	
	b: if $\mc{M},z\not\forlo{f3} \psi\fcond\chi$, then $\mc{M},z\not\forlo{f3}\neg \psi$ and $\mc{M},z\not\forlo{f3} \psi\land \chi$. Therefore, either  $\mc{M},z\not\forlo{f3}\neg \psi$ and $\mc{M},z\not\forlo{f3} \psi$, or $\mc{M},z\not\forlo{f3}\neg \psi$ and $\mc{M},z\not\forlo{f3} \chi$. Thus, by induction hypothesis, 
	\[
	\dt\vdash \neg \T\corn{\neg \trre{\circ}(\psi)}\land \neg \T\corn{ \trre{\circ}(\psi)}\;\text{  or }\; \dt\vdash \neg \T\corn{\neg \trre{\circ}(\psi)}\land \neg \T\corn{ \trre{\circ}(\chi)}
	\]
	By $(\dt\!\fcond)$, we can in either case conclude that $\dt \vdash \neg \T\corn{ \trre{\circ}(\psi)\fcond \trre{\circ}(\chi)}$, that is $\dt \vdash \neg \T\corn{ \trre{\circ}(\psi\fcond\chi)}$. Finally, if $\mc{M},z\not\forlo{f3} \neg(\psi\fcond\chi)$, then either $\mc{M},z\not\forlo{f3} \psi$ or $\mc{M},z\not\forlo{f3} \neg \chi$. Therefore, by induction hypothesis, 
	\[
	 \dt \vdash \neg \T\corn{\trre{\circ}(\psi)}\;\text{ or }\;\dt\vdash\neg \T\corn{\trre{\circ}(\chi)}
	\]
	In either case, by ($\dt\!\neg\!\fcond$) and the definition of $\trre{\circ}$, one obtains that $\neg \T\corn{\trre{\circ}(\psi\fcond\chi)}$ is provable in $\dt$. 
	\end{proof}
	
	\begin{lem}
		Let $\mc{M}=(\{w\},\{z\},R,V)$ be as in Lemma \ref{lem:mainrp}(ii). Then there is a realization $\dagger$ such that:
		\begin{enumeratei}
					\item if $\mc{M},z\forlo{lp}\vphi$, then $\kfcomp\vdash \T\corn{\trre{\dagger}(\vphi)}$;
					\item if $\mc{M},z\not\forlo{lp}\vphi$, then $\kfcomp\vdash \neg \T\corn{\trre{\dagger}(\vphi)}$.
		\end{enumeratei}
	\end{lem}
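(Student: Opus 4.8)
The plan is to run the exact dual of the argument for Lemma \ref{lem:intre}(i), replacing the value $\neither$ by $\both$ and the consistency axiom ${\sf CN}$ by the completeness axiom ${\sf CM}$ throughout. Since we are in a complete model, $V_z(p)\in\{0,1,\both\}$ for every $p$, and $\mc{M},z\forlo{lp}\vphi$ amounts to $\intlp{\vphi}{M}{z}\in\{1,\both\}$, i.e.\ to $\intlp{\vphi}{M}{z}\neq 0$. The idea is to code the value $\both$ by the Liar sentence, whose dual character is exactly what ${\sf CM}$ guarantees. First I would set, writing $\lambda$ for the Liar of Lemma \ref{lem:liarkf},
\[
    p^\dagger=\begin{cases}
        0=0,&\text{if $V_z(p)=1$,}\\
        \neg\lambda,&\text{if $V_w(p)=1$ and $V_z(p)=\both$,}\\
        \lambda,&\text{if $V_w(p)=0$ and $V_z(p)=\both$,}\\
        0=1,&\text{otherwise.}
    \end{cases}
\]
Note that the polarity of the two middle clauses is flipped with respect to the realization $\circ$ of Lemma \ref{lem:intre}: there $\kfcons\vdash\lambda$, whereas here $\kfcomp\vdash\neg\lambda$, so it is $\neg\lambda$ that now carries the external value $1$. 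This flip is irrelevant to the present claim, which only inspects the world $z$, but it is what will make the classical clause of Lemma \ref{lem:mainrp}(ii) go through.

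Next I would prove (i) and (ii) simultaneously by induction on the positive complexity of $\vphi$, exactly as in Lemma \ref{lem:intre}. For the base cases, the clauses $\vphi\in\{\top,\bot\}$ and $\vphi\in\{p,\neg p\}$ with $V_z(p)\in\{0,1\}$ are settled by $(\kf1)$ and $(\kf2)$ together with faithfulness (which forces $V_w(p)=V_z(p)$ when $V_z(p)$ is classical, so the external and internal polarities agree). The genuinely new base case is $V_z(p)=\both$: here both $\mc{M},z\forlo{lp}p$ and $\mc{M},z\forlo{lp}\neg p$ hold, and we must produce both $\kfcomp\vdash\T\corn{\trre{\dagger}(p)}$ and $\kfcomp\vdash\T\corn{\trre{\dagger}(\neg p)}$. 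This is precisely where completeness enters: from Lemma \ref{lem:liarkf}(iii) we have $\kfcomp\vdash\T\corn{\lambda}$, and, since $\neg\lambda$ is $\neg\neg\T\corn\lambda$, the clauses $(\kf3)$ and $(\kf8)$ give $\T\corn{\neg\lambda}\lra\T\corn{\lambda}$, whence also $\kfcomp\vdash\T\corn{\neg\lambda}$. Thus whichever of $\lambda,\neg\lambda$ was chosen, both $\trre{\dagger}(p)$ and $\trre{\dagger}(\neg p)$ are provably $\T$-true, and the choice between them is immaterial at $z$ by $(\kf3)$.

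The propositional induction steps are routine. Since $\intlp{\cdot}{M}{z}$ computes $\land$ as $\inf_\preceq$ and $\vee$ as $\sup_\preceq$, we have $\mc{M},z\forlo{lp}\psi\land\chi$ iff both conjuncts hold and $\mc{M},z\forlo{lp}\psi\vee\chi$ iff some disjunct holds; these match $(\kf4)$, $(\kf5)$ (using $(\kf3)$ and De Morgan for the negated and the disjunctive clauses) applied to the induction hypothesis. The crux is the modal cases $\vphi=\Box\psi$ and $\vphi=\neg\Box\psi$. Because $z$ is idiosyncratic, $Rzz_0\lra z_0=z$, so $\intlp{\Box\psi}{M}{z}=\intlp{\psi}{M}{z}$; hence $\mc{M},z\forlo{lp}\Box\psi$ iff $\mc{M},z\forlo{lp}\psi$, and $\mc{M},z\forlo{lp}\neg\Box\psi$ iff $\mc{M},z\forlo{lp}\neg\psi$. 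For $\Box\psi$ the required provability of $\T\corn{\T\corn{\trre{\dagger}(\psi)}}$ (resp.\ its negation) follows from the induction hypothesis on $\psi$ together with the self-referential clause $(\kf8)$, which yields $\T\corn{\T\corn{\alpha}}\lra\T\corn{\alpha}$ for every sentence $\alpha$. For $\neg\Box\psi$, where $\trre{\dagger}(\neg\Box\psi)=\neg\T\corn{\trre{\dagger}(\psi)}$, the corresponding step uses $(\kf9)$: since $\trre{\dagger}(\psi)$ is a sentence, $(\kf9)$ collapses $\T\corn{\neg\T\corn{\alpha}}$ to $\T\corn{\neg\alpha}$, reducing the claim to the induction hypothesis on $\neg\psi$.

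I expect the main obstacle to lie exactly here, in checking that the substitution instances of $(\kf8)$ and $(\kf9)$ apply to the codes $\corn{\trre{\dagger}(\psi)}$ — i.e.\ that ${\rm val}$, ${\rm ng}$ and ${\rm sub}$ behave as intended on these numerals and that the $\neg{\rm Sent}_{\lt}$ side condition of $(\kf9)$ is discharged — and, at the propositional level, in verifying that coding $\both$ by the Liar interacts correctly with the negated compounds, for which one again invokes $(\kf3)$ and the $\both$-stability of $\lambda$ under negation. Everything else is bookkeeping dual to Lemma \ref{lem:intre}(i). The companion classical-world statement, Lemma \ref{lem:mainrp}(ii), then follows by combining this internal computation at $z$ with the external values of $\lambda$ and $\neg\lambda$ built into the definition of $\dagger$.
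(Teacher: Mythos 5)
Your proof is correct and follows the paper's own argument: the paper simply exhibits the realization $\dagger$ and asserts "induction on $\vphi$", and your realization coincides with it up to what appears to be a typo in the paper's display (its \emph{otherwise} clause assigns $0=1$ to the case $V_z(p)=1$, which your $0=0$ correctly repairs, since for $V_z(p)=1$ one needs $\kfcomp\vdash\T\corn{p^\dagger}$). The details you supply — the provability of both $\T\corn{\lambda}$ and $\T\corn{\neg\lambda}$ in $\kfcomp$ via Lemma \ref{lem:liarkf}(iii) and $(\kf3)$/$(\kf8)$, and the collapse of the modal cases through $(\kf8)$ and $(\kf9)$ at the idiosyncratic world — are exactly the ones the paper leaves implicit.
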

	
	\begin{proof}
		Let $\dagger$ be
		\[
		p^\dagger=\begin{cases}
							0=1,&\text{if $V_z(p)=0$,}\\
							\neg\lambda,&\text{if $V_w(p)=1$ and $V_z(p)={\sf b}$},\\
							\lambda,&\text{if $V_w(p)=0$ and $V_z(p)={\sf b}$,}\\
							
							0=1,&\text{otherwise.}
						\end{cases}
		\]

		The proof is again by induction on $\vphi$ both in (i) and (ii).
	\end{proof}

	We can now prove Lemma \ref{lem:mainrp}.
	
	\begin{proof}[Proof of Lemma \ref{lem:mainrp}]
		The proofs are again by induction on the positive build up of $\vphi$. In cases (i)a, (i)b and (i)c one employs $\circ$. We only provide some details for (i)c mainly because of the peculiar nature of the interaction between $\ra$ and $\fcond$. The other claims are easier. 
		
		If $\vphi$ is $p$, then by assumption either $p^\circ=(0=0)$ or $p^\circ=\lambda$, so $\dt \vdash p^\circ$ by the $\dt$ axioms and Lemma \ref{lem:liarkf}. If $\vphi$ is $\neg p$, then $p^\circ=(0=1)$ or $p^\circ =\lambda$, and in either case $\dt\vdash \neg p^\circ$. If $\vphi$ is $\Box \psi$ or $\neg \Box \psi$, the claims follow from Lemma \ref{lem:intre}. If $\vphi$ is $\psi\fcond\chi$, by Remark \ref{rem:dtextc} it is sufficient to show that $\dt\vdash \trre{\circ}(\corn{\psi})\ra\trre{\circ}(\corn{\chi})$. If $\mc{M},w\forlo{f3}\psi\fcond\chi$, then either $\mc{M},w\forlo{f3}\neg \psi$ or $\mc{M},w\forlo{f3}\neg \psi\land\chi$. By induction hypothesis, in either case we obtain the desired claim. Similarly, if $\mc{M},z\forlo{f3} \neg (\psi\fcond \chi)$, then $\mc{M},w\forlo{f3} \psi$ and $\mc{M},w\forlo{f3} \neg\chi$, therefore by induction hypothesis $\dt\vdash \trre{\circ}(\psi)\land  \trre{\circ}(\neg \chi)$. The claim is then obtained by Remark \ref{rem:dtextc} and by definition of $\trre{\circ}$. 
		
		For (ii) one employs $\dagger$. The rest is analogous.  
	\end{proof}
	
	We finally establish the main result of this section.%\JS{Do we need to highlight faithfulness?}
	
	\begin{prop}
		\hfill
		\begin{enumeratei}\label{prop:refcons}
			\item Let $S$ be a consistent, first-order extension of $\kfcons$ in $\lt$, then  
			\[
				\text{${\sf M^n}\vdash \vphi$ if and only if for all realizations $\star$, $S\vdash \trre{\star}(\vphi)$.}
			\]
			\item  Let $S$ be a consistent, first-order extension of $\wkf$ in $\lt$, then  
			\[
				\text{${\sf M^w}\vdash \vphi$ if and only if for all realizations $\star$, $S\vdash \trre{\star}(\vphi)$.}
			\]
			\item  Let $S$ be a consistent, first-order extension of $\dt$ in $\lt$, then  
			\[
				\text{${\sf M^f}\vdash \vphi$ if and only if for all realizations $\star$, $S\vdash \trre{\star}(\vphi)$.}
			\]
			\item Let $S$ be a consistent, first-order extension of $\kfcomp$ in $\lt$, then  
			\[
				\text{${\sf M^b}\vdash \vphi$ if and only if for all realizations $\star$, $S\vdash \trre{\star}(\vphi)$.}
			\]
		\end{enumeratei}
	\end{prop}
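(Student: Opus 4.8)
The plan is to reuse the architecture of the proof of Theorem~\ref{thm:kfmain}, but to replace its model-theoretic witness of non-theoremhood---``$\trre{\bullet}(\vphi)$ is false in one distinguished fixed-point model''---by a \emph{proof-theoretic} one: ``$\neg\trre{\star}(\vphi)$ is outright provable in the base theory.'' This is exactly the upgrade that makes the conclusion insensitive to \emph{which} consistent extension $S$ we picked. I treat the four items uniformly. Write $T$ for the modal logic (one of ${\sf M^n},{\sf M^w},{\sf M^f},{\sf M^b}$), $S_0$ for the base theory it governs ($\kfcons,\wkf,\dt,\kfcomp$ respectively), and $s$ for the matching evaluation scheme (${\sf k3},{\sf b3},{\sf f3},{\sf lp}$); the hypothesis is that $S$ is a consistent first-order extension of $S_0$ in $\lt$ (in $\lt^\fcond$ for $\dt$).

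For soundness, suppose $T\vdash\vphi$. In each case one shows by induction on the length of the derivation in $T$ that $S_0\vdash\trre{\star}(\vphi)$ for every realization $\star$: for $T\in\{{\sf M^n},{\sf M^b}\}$ this is precisely the soundness half of Corollary~\ref{cor:cncmnr}, while for $T\in\{{\sf M^w},{\sf M^f}\}$ it is a routine but new induction, the only nonroutine steps being the verification that the Weak-Kleene axioms $(\vee3),(\land3)$ and, for ${\sf M^f}$, the conditional axioms $(\fcond1),(\fcond2)$ become $S_0$-theorems under $\trre{\star}$ by means of the corresponding $\wkf$- and $\dt$-axioms. Since $S\supseteq S_0$, a theorem of $S_0$ is a theorem of $S$, and hence $S\vdash\trre{\star}(\vphi)$ for all $\star$.

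For completeness I argue contrapositively. Assume $T\not\vdash\vphi$. Instantiating clause~(i) of Corollary~\ref{cor:ademocfaith} at the empty premise set for the pair $(s,T)$, and passing to single-rooted frames via Corollary~\ref{cor:srprop}, I obtain a mixed, idiosyncratic, faithful, single-rooted model $\mc{M}=(\{w\},\{z\},R,V)$ of the required kind---consistent for ${\sf M^n},{\sf M^w},{\sf M^f}$ and complete for ${\sf M^b}$---with $\mc{M},w\not\Vdash_s\vphi$. Because every formula takes a classical value at the classical world $w$ (cf.\ the remark after Definition~\ref{def:trumim}), this gives $\mc{M},w\Vdash_s\neg\vphi$. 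Feeding $\mc{M}$ into the appropriate clause of Lemma~\ref{lem:mainrp} produces a realization $\star$ with $S_0\vdash\trre{\star}(\neg\vphi)$, i.e.\ $S_0\vdash\neg\,\trre{\star}(\vphi)$, since $\trre{\star}$ commutes with negation. As $S\supseteq S_0$ we get $S\vdash\neg\,\trre{\star}(\vphi)$, and the \emph{consistency} of $S$ then yields $S\not\vdash\trre{\star}(\vphi)$. Thus not all realizations make $\trre{\star}(\vphi)$ a theorem of $S$, which is the contrapositive we wanted.

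The substantive work has already been carried out in Lemma~\ref{lem:mainrp} and its engine Lemma~\ref{lem:intre}, where the realizations $\circ$ (and $\dagger$ for the complete case) are engineered so that \emph{provability}, rather than mere truth, of the truth-interpretation is forced at both worlds; the determinateness side-conditions of the Weak-Kleene clauses and the $\fcond$-clauses of $\dt$ are exactly the delicate points addressed there. Granting those, the only genuinely new ingredient in the present argument is the closing consistency step: having $\neg\trre{\star}(\vphi)$ \emph{provable} in $S_0$ promotes ``underivability in one model'' to ``underivability in every consistent extension.'' This is what secures stability under arbitrary---not necessarily recursively enumerable---consistent extensions, in particular under the set of $\lt$-sentences true in a fixed-point model such as $(\nat,\mc{I}_\Phi)$.
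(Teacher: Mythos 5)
Your proposal is correct and follows essentially the same route as the paper: soundness via the truth-interpretation of the modal axioms in the base theory (Corollary \ref{cor:cncmnr} and its analogues), and completeness by combining Corollaries \ref{cor:ademocfaith} and \ref{cor:srprop} with Lemma \ref{lem:mainrp} to get a realization under which $\neg\trre{\star}(\vphi)$ is \emph{provable} in the base theory, whence consistency of $S$ blocks $S\vdash\trre{\star}(\vphi)$. Your explicit flagging of the final consistency step and of the (routine) soundness inductions for ${\sf M^w},{\sf M^f}$ only makes explicit what the paper leaves implicit in its ``the others are analogous.''
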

	
	\begin{proof}
		We consider the case for $\kfcons$, the other are analogous. Corollary \ref{cor:cncmnr} already gives us the soundness direction. For the completeness direction, if ${\sf M^n}\nvdash \vphi$, then by Corollaries \ref{cor:ademocfaith} and \ref{cor:srprop} there is a single-rooted mixed, idiosyncratic, consistent faithful model $\mc{M}$ such that $\mc{M},w\forlo{k3}\neg\vphi$ for $w\in W$. By Lemma \ref{lem:mainrp} (i)a, $\kfcons\vdash \trre{\circ}(\neg\vphi)$. Therefore, there is a realization $\circ$ such that $S\nvdash \trre{\circ}(\vphi)$ for all $S\supseteq \kfcons$. 
	\end{proof}

% 	\begin{prop}
% 		Let $S$ be a consistent, first-order extension of $\kfcomp$ in $\lt$, then  
% 			\[
% 				\text{${\sf M^b}\vdash \vphi$ if and only if for all realizations $\star$, $S\vdash \trre{\star}(\vphi)$.}
% 			\]
% 	\end{prop}
% 	\begin{proof}Parallel to Proposition \ref{prop:refcons}.\end{proof}

	%%%COINCLUSION%%%%
\section{Conclusion}	
	We have determined the (propositional) modal logic of Feferman's axiomatizations of Kripke's theory of truth. In the case of systems whose truth-predicates behave according to a paracomplete or paraconsistent three-valued logics, such modal logics amount to the modal logics of all of their consistent extensions, including the sentences satisfied by consistent and complete fixed point models. 
	
	In the present paper we did not consider paraconsistent (three- or four-valued) theories of truth based on Weak Kleene or Feferman-Aczel logic. We expect that our results extend to such cases with little modification, but this would need to be verified in detail. In particular, the {\sc Witness realization} would need to be changed to accommodate the behaviour of the Weak-Kleene disjunction, and the surrounding lemmata would need to be changed accordingly. Similar modifications, although arguably less drastic, are required to establish analogues of the result in \S\ref{sec:mdfptr} for the paraconsistent (three-valued) versions of $\wkf$ and $\dt$. It would also be interesting to investigate the modal logics of theories whose truth predicate is sound with respect to  supervaluational fixed-points.

	 Perhaps surprisingly the truth-theoretic completeness results of this paper can be lifted to the setting of first-order modal logic, that is, we can determine the first-order modal logics of Kripke-Feferman truth. This contrasts strongly with the situation in the case of provability where it is well known that the first-order logic of provability cannot be axiomatized. The principal reason for this asymmetry is that in the first-order modal logics of truth the quantifiers will commute with the modal operator, which is not the case for quantified provability logic. The details of the truth-theoretic completeness results for first-order modal logic are presented in our companion paper \cite{nist20b}.

	%Tdehe result in turn rests on a non-trivial extension of the semantics provided in \S\ref{sec:clmltr}, including novel modal completeness results\carlo{More?}.

%%%BIBLIOGRAPHY%%%%%%

\bibliographystyle{alpha}
	\bibliography{mybib} 
	
%%%APPENDICES$$$$$$$

\section*{Appendix A}

The basic nonclassical system we are interested in is the four-valued sub-classical logic known as first-degree entailment \cite{anbe75}. 
\begin{de}[{\sf FDE}]

\begin{align*}
	& \text{{\small{\sc (ref)}}}\;\;\;\AxiomC{$\Gamma, \vphi \Ra \vphi,\Delta$}\noLine\UnaryInfC{\textup{for $\vphi$ a literal}}  \DisplayProof
		&&\AxiomC{$\Gamma\Ra \Delta,\vphi$}\AxiomC{$\vphi,\Gamma\Ra \Delta$}\Llb{cut}
		\BinaryInfC{$\Gamma \Ra \Delta$}\DisplayProof\\[10pt]
	&(\bot) \;\;\;\Gamma,\bot\Ra \Delta&&(\top)\;\;\;\Gamma\Ra\top,\Delta\\[10pt]
%	& \ax{\Gamma\Ra\Delta,\vphi}\Llb{$\neg$l}
%		\uinf{\neg\vphi,\Gamma\Ra\Delta}
%		\DisplayProof
%		&&\\[10pt]
%	& \AxiomC{$\Gamma\Ra \Delta$}\LeftLabel{\small{\textsc{(wl)}}}\UnaryInfC{$\Gamma,\vphi\Ra\Delta$}\DisplayProof
%		&& \AxiomC{$\Gamma\Ra \Delta$}\LeftLabel{\small{\textsc{(wr)}}}\UnaryInfC{$\Gamma\Ra\vphi,\Delta$}\DisplayProof\\[10pt]
	& \AxiomC{$\Gamma,\vphi\Ra\Delta$}\Llb{dn-l}
	\UnaryInfC{$\Gamma,\neg \neg \vphi \Ra\Delta$}
	\DisplayProof
		&& \AxiomC{$\Gamma\Ra\vphi,\Delta$}\Llb{dn-r}
	\UnaryInfC{$\Gamma\Ra\neg \neg \vphi,\Delta$}
	\DisplayProof\\[10pt]
	& \AxiomC{$\Gamma,\neg\vphi\Ra \Delta$}
		\AxiomC{$\Gamma\Ra \neg\psi,\Delta$}\Llb{$\neg\land$l}
		\BinaryInfC{$\Gamma,\neg (\vphi\land \psi) \Ra \Delta$}\DisplayProof
		&&\AxiomC{$\Gamma\Ra\neg\vphi_i, \Delta$}\Llb{$\neg\land${r}}\RightLabel{$i=0,1$}
	\UnaryInfC{$\Gamma\Ra \Delta,\neg (\vphi_0\land \vphi_1)$}\DisplayProof\\[10pt]
	&\AxiomC{$\Gamma,\vphi_i\Ra \Delta$}\Llb{$\land$l}\UnaryInfC{$\Gamma,\vphi_0\land \vphi_1\Ra \Delta$}\DisplayProof
		&&\AxiomC{$\Gamma\Ra \vphi,\Delta$}\AxiomC{$\Gamma\Ra\psi,\Delta$}\Llb{$\land$r}\BinaryInfC{$\Gamma \Ra \Delta,\vphi\land \psi$}\DisplayProof\\[10pt]
	& \AxiomC{$\Gamma,\neg\vphi_i\Ra \Delta$}\Llb{$\neg\land${r}}\Llb{$\neg\vee${l}}
		\UnaryInfC{$\Gamma,\neg(\vphi_0\vee\vphi_1) \Ra\Delta$}
			\DisplayProof
		&& \AxiomC{$\Gamma\Ra \neg\vphi,\Delta$}
		\AxiomC{$\Gamma\Ra \neg\psi,\Delta$}\Llb{$\neg\vee$r}
		\BinaryInfC{$\Gamma \Ra \neg (\vphi\vee \psi),\Delta$}\DisplayProof\\[10pt]
	&\AxiomC{$\Gamma,\vphi\Ra\Delta$}\AxiomC{$\Gamma,\psi\Ra\Delta$}\Llb{$\vee$l}\BinaryInfC{$\Gamma,\vphi\vee \psi \Ra \Delta$}\DisplayProof
		&&\AxiomC{$\Gamma\Ra\vphi_i, \Delta$}\Llb{$\vee$r}\UnaryInfC{$\Gamma\Ra\vphi_0\vee \vphi_1, \Delta$}\DisplayProof
		\\
\end{align*}
\end{de}

${\sf FDE}$ can be considered to be the basis of well-known three-valued paracomplete or paraconsistent systems \cite{kle52,cos72}.
\begin{de}\hfill
	\begin{enumeratei}
		\item $\sthr$ is obtained by adding to {\sf FDE} the sequent:
			\[
				\tag{{\sc sym}} \Gamma,\vphi,\neg\vphi\Ra \psi,\neg\psi,\Delta
			\]
		\item $\kthr$ is obtained by adding to $\fde$ the rule:
			\[
				\ax{\Gamma\Ra\Delta,\vphi}\Llb{$\neg$l}
				\uinf{\neg\vphi,\Gamma\Ra\Delta}
					\DisplayProof
			\]
		\item $\lp$ is obtained by adding to $\fde$ the rule:
			\[
				\ax{\vphi,\Gamma\Ra\Delta}\Llb{$\neg$r}
				\uinf{\Gamma\Ra\Delta,\neg\vphi}
					\DisplayProof
			\]
	\end{enumeratei}	
\end{de}

The next logic we consider Weak Kleene logic. Our axiomatization is a variant of the one that can be found in \cite{coco13}.
\begin{de}[Weak Kleene, {\sf B3}]
\begin{align*}
	       	   & \text{{\small{\sc (ref)}}}\;\;\; \AxiomC{$\Gamma, p \Ra p,\Delta$}\DisplayProof
		&&\AxiomC{$\Gamma\Ra \Delta,\vphi$}\AxiomC{$\vphi,\Gamma\Ra \Delta$}\Llb{cut}
		\BinaryInfC{$\Gamma \Ra \Delta$}\DisplayProof\\[10pt]
		&(\bot) \;\;\;\Gamma,\bot\Ra \Delta&&(\top)\;\;\;\Gamma\Ra\top,\Delta\\[10pt]
	& \ax{\Gamma\Ra\Delta,\vphi}\Llb{$\neg$l}
		\uinf{\neg\vphi,\Gamma\Ra\Delta}
		\DisplayProof
		&&\ax{\Gamma,\vphi\Ra\Delta}\Llb{$\neg$r}
			\uinf{\Gamma\Ra\Delta,\neg \vphi}\noLine
				\uinf{\text{with $\proplet{\vphi}\subseteq \proplet{\Gamma}$ }}
		\DisplayProof\\[10pt]
		&\AxiomC{$\Gamma,\vphi_i\Ra \Delta$}\Llb{$\land$l}\UnaryInfC{$\Gamma,\vphi_0\land \vphi_1\Ra \Delta$}\DisplayProof
		&&\AxiomC{$\Gamma\Ra \vphi,\Delta$}\AxiomC{$\Gamma\Ra\psi,\Delta$}\Llb{$\land$r}\BinaryInfC{$\Gamma \Ra \Delta,\vphi\land \psi$}\DisplayProof\\[10pt]
	&\AxiomC{$\Gamma,\vphi\Ra \Delta$}
		\ax{\Gamma,\psi\Ra\Delta}\Llb{$\vee$l}
		\BinaryInfC{$\Gamma,\vphi\vee\psi\Ra \Delta$}
			\DisplayProof	
	&&\ax{\Gamma\Ra\vphi_i,\Delta}\Llb{$\vee$r}
		\uinf{\Gamma\Ra\vphi_0\vee\vphi_1,\Delta}\noLine
			\uinf{\text{with $\proplet{\vphi_0,\vphi_1}\subseteq \proplet{\Gamma}$}}
	\DisplayProof\\[10pt]
\end{align*}
\end{de}

\begin{remark}
	The restriction on the rules ($\land${\sc r}) and ($\neg${\sc r}) can be seen as `forcing' a determinate truth value on the principal formulas. 
\end{remark}

The last logic we consider is the extension of Weak Kleene considered (semantically) by Peter Aczel for his Frege Structures \cite{acz80}, and Feferman in \cite{fef08}.

\begin{de}[Feferman Logic, {\sf F3}] The language $\mc{L}_0^\fcond$ of {\sf F3}, besides the connectives of {\sf B3}, features a special conditional $\fcond$. The rules of $\fthr$ are the the rules of $\bthr$ plus:
\begin{align*}
	&\ax{\Gamma\Ra\Delta,\vphi}\Llb{$\fcond$l}
	\ax{\psi,\Gamma\Ra\Delta}
		\binf{\Gamma,\vphi\fcond\psi\Ra\Delta}
	\DisplayProof\\
	& \ax{\Gamma \Ra \neg \vphi,\Delta}\Llb{$\fcond$r1}
			\uinf{\Gamma\Ra\vphi\fcond\psi,\Delta}
		\DisplayProof
		&& \ax{\Gamma,\vphi\Ra \psi,\Delta}\Llb{$\fcond$r2}
			\uinf{\Gamma\Ra \vphi\fcond\psi,\Delta}\noLine
				\uinf{\text{with $\proplet{\vphi,\psi}\subseteq \proplet{\Gamma}$ }}
		\DisplayProof
		%\\[10pt]
\end{align*}
\end{de}

\begin{remark}
{\sc ($\fcond$r2)} is derivable in ${\rm B3}$ for the material conditional defined by $\neg$ and $\vee$. 
\end{remark}

\section*{Appendix B: Completeness of the Modal Nonclassical Systems}

As usual, the soundness direction is straightforward. The main idea of the completeness proof is to modify the standard Henkin strategy by replacing the notion of maximally consistent set with the one of \emph{saturated set}. 
	\begin{de}[$S_\blacksquare$-Saturated set \cite{jath96}]
		For $S_\blacksquare$ as above, a set $\Gamma$ of $\lbox$-sentences is $S_\blacksquare$-saturated iff for all finite $\Delta\subseteq \sentlpb$: if $S_\blacksquare\vdash \Gamma\Ra\Delta$, then $\Gamma\cap\Delta \neq \varnothing$. 
	\end{de}
	
	\begin{lem}\label{lem:genlin}
		If $S_\blacksquare \nvdash \Gamma\Ra\Delta$, then there is a $S_\blacksquare$-saturated $\Gamma^*\supseteq \Gamma$ such that $\Gamma^*\cap \Delta=\varnothing$.
	\end{lem}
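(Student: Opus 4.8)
The plan is to run a one-sided Lindenbaum--Henkin construction adapted to saturation, keeping the sequent $\Gamma^*\Ra\Delta$ underivable throughout. Fix an enumeration $\varphi_0,\varphi_1,\dots$ of $\sentlpb$ and define an increasing chain by $\Gamma_0:=\Gamma$ and
\[
\Gamma_{n+1}:=\begin{cases}\Gamma_n\cup\{\varphi_n\}&\text{if }S_\blacksquare\nvdash\Gamma_n,\varphi_n\Ra\Delta,\\ \Gamma_n&\text{otherwise,}\end{cases}
\]
setting $\Gamma^*:=\bigcup_{n}\Gamma_n$. The invariant to maintain is $S_\blacksquare\nvdash\Gamma_n\Ra\Delta$ for every $n$; the base case is the hypothesis and the step is immediate from the case distinction. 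Since a derivation mentions only finitely many antecedent formulas, the invariant lifts to the union, so $S_\blacksquare\nvdash\Gamma^*\Ra\Delta$.

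Two things then need checking. First, $\Gamma^*\cap\Delta=\varnothing$: from the hypothesis together with the initial sequent $\Gamma,\vphi\Ra\vphi,\Delta$ we get $\Gamma\cap\Delta=\varnothing$, and no $\varphi_n\in\Delta$ is ever added, because in that event $S_\blacksquare\vdash\Gamma_n,\varphi_n\Ra\Delta$ holds trivially (the candidate formula already occurs in the succedent), so the defining condition fails at stage $n$. Second, saturation: suppose $S_\blacksquare\vdash\Gamma^*\Ra\Theta$ for some finite $\Theta=\{\theta_1,\dots,\theta_k\}\seq\sentlpb$ while $\Gamma^*\cap\Theta=\varnothing$, aiming for a contradiction. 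Since each $\theta_i\notin\Gamma^*$, at the stage where $\theta_i$ was offered it was rejected, so there are finite $\Pi_i\seq\Gamma^*$ and $\Delta_i\seq\Delta$ with $S_\blacksquare\vdash\Pi_i,\theta_i\Ra\Delta_i$.

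The crux is to glue these witnesses together using cut, which is a primitive rule of the calculus. Put $\Pi:=\Gamma_0\cup\bigcup_i\Pi_i\seq\Gamma^*$ and $\Delta^*:=\bigcup_i\Delta_i\seq\Delta$, both finite, where $\Gamma_0\seq\Gamma^*$ is the finite antecedent of the assumed derivation of $\Gamma^*\Ra\Theta$. Weakening (available by the Reflexivity--Weakening Lemma) yields $S_\blacksquare\vdash\Pi\Ra\theta_1,\dots,\theta_k,\Delta^*$ and $S_\blacksquare\vdash\Pi,\theta_i\Ra\Delta^*$ for each $i$. Cutting $\theta_1,\dots,\theta_k$ out of the succedent one at a time produces $S_\blacksquare\vdash\Pi\Ra\Delta^*$, hence $S_\blacksquare\vdash\Gamma^*\Ra\Delta$, contradicting the invariant. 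Thus $\Gamma^*\cap\Theta\neq\varnothing$, and $\Gamma^*$ is $S_\blacksquare$-saturated as required. I expect the bookkeeping in this cut-gluing step --- uniformly weakening every context to the common finite $\Pi$ and $\Delta^*$ so that each application of cut lands exactly on the shape demanded by the rule --- to be the only genuinely delicate part; the maintenance of the invariant and the emptiness of $\Gamma^*\cap\Delta$ are routine.
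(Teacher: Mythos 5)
Your construction is the paper's construction in disguise: under the finite-subset convention for derivability from infinite sets, the acceptance condition $S_\blacksquare\nvdash\Gamma_n,\varphi_n\Ra\Delta$ is exactly the paper's requirement that every derivable succedent $\Theta$ from $\Gamma_n,\varphi_n$ meet $\sentlpb\setminus\Delta$, so the two chains coincide (the paper's insistence that each sentence be enumerated infinitely often is inessential, since rejection is preserved under growth of $\Gamma_n$ by weakening). The argument is correct, and your explicit weaken-then-cut elimination of $\theta_1,\dots,\theta_k$ --- which goes through because cut and weakening are primitive/admissible in $S_\blacksquare$ and sequents are finite sets, so each cut lands on the required shape --- is just a self-contained unfolding of the induction on $|\Theta_0\cap(\sentlpb\setminus\Delta)|$ that the paper outsources to the external lemma it cites.
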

	\begin{proof}[Proof sketch]
		Starting with an enumeration of $\lbox$-sentences in which every sentence occurs infinitely many times, one defines:
		\begin{align*}
			\Gamma_0=:&\;\Gamma\\[1em]
			\Gamma_{n+1}=:&
					\begin{cases}
						\Gamma_n\cup\{\vphi_n\},&\text{if $S_\blacksquare \vdash \Gamma_n,\vphi_n\Ra \Theta$ entails $\Theta \cap (\sentlpb\setminus \Delta)\neq\varnothing$}\\
							&\text{ for all finite $\Theta\subseteq \sentlpb$,}\\
						\Gamma_n &\text{otherwise;}
					\end{cases}\\[1em]
			\Gamma^*:= &\bigcup_{n\in \omega} \Gamma_n	
		\end{align*}
		
		Now $\Gamma^*\subseteq \sentlpb\setminus \Delta$. Therefore, $\Gamma^*\cap \Delta=\varnothing$. It remains to be shown that $\Gamma^*$ is $S_\blacksquare$-saturated. If $S_\blacksquare \vdash \Gamma^* \Ra \Theta$ for some $\Theta$ then, since deductions are finite, there is an $n$ and a finite $\Theta_0\subseteq \Theta$ such that $S_\blacksquare \vdash \Gamma_n\Ra \Theta_0$. By induction on the size of the finite set $\Theta_0\cap (\sentlpb\setminus \Delta)$ -- \cite[Lemma 4.3]{jath96} --, one obtains that $\Gamma^*\cap \Theta\neq \varnothing$. 
	\end{proof}

Canonical models are then constructed from saturated sets is the usual way. However, in contrast to the classical case it no longer suffices to define the accessibility relation $z_0Rz_1$ simply by requiring $z_1$ to be a superset of $\{\vphi\sth \Box \vphi\in z_0\}$. Rather we also need to stipulate that $z_1\subseteq \{\vphi\sth \Diamond \vphi \in z_0\}$.\footnote{Recall that in our nonclassical context $\Diamond:=\neg\Box \neg$, whereas this will not be true in the context of our classical modal logic.} In the classical setting these two conditions are equivalent since our worlds are assumed to be maximally consistent.	
	\begin{de}[Canonical frame]
		For $S$ as above, the canonical frame for $S_\blacksquare$ $(Z_S,R_S)$ is specified by:
		\begin{align*}
			Z^S:=&\;\{ z \sth \text{$z$ is $S_\blacksquare$-saturated}\}\\
			R^Sz_0z_1:\Lra&\; \{\vphi\sth \Box \vphi\in z_0\}\subseteq z_1\subseteq \{\vphi\sth \Diamond \vphi \in z_0\}			
		\end{align*}
	\end{de}
	
		The canonical model is obtained from the canonical frame by extending it with a suitable evaluation. The details of the evaluations vary depending on the kind of saturated set we are considering. We let, for $S\in \{{\sf FDE, K3, LP, B3, F3, KS3}\}$:

			%\begin{minipage}[c]{0.5\textwidth}
					\[
							V^{S}_z(p)=
								\begin{cases}
									1&\text{if $p\in z$ and $\neg p \notin z$}\\
									0&\text{if $\neg p\in z$ and $p\notin z$}\\
									\both&\text{if $p\in z$ and $\neg p\in z$}\\
									\neither&\text{otherwise}
								\end{cases}
					\]
					
			\begin{de}[Canonical model]\label{def:cannon}
			For $S\in\{\kthr,\bthr,\fthr,\lp,\fde,{\sf KS3}\}$, the ca\-no\-ni\-cal model $\mc{M}^S$ for $S_\blacksquare$ is the triple $(Z^S,R^S,V^S)$.%C:Why is ks3 not here? Was there a reason? 
		\end{de}			

		\begin{lem}[Existence]\label{lem:exist}
			Let $z_0$ and $z_1$ be $\ssbox$-saturated. Then the following implications hold:
			\begin{enumeratei}
				\item  \label{it:exinc1} if $\{\vphi\sth \Box \vphi\in z_0\}\subseteq z_1$, then there is an $\ssbox$-saturated  $z\subseteq z_1$ such that $R^Sz_0 z$;
				\item \label{it:exinc2} if $z_1\subseteq \{\vphi \sth \Diamond \vphi \in z_0\} $, then there is an $\ssbox$-saturated $z\supseteq z_1$ such that $R^Sz_0 z$. 
			\end{enumeratei}
		\end{lem}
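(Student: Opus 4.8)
The plan is to establish both clauses via the generalized Lindenbaum construction of Lemma~\ref{lem:genlin}, reducing each to a single non-derivability claim. Writing $B_0:=\{\vphi\sth\Box\vphi\in z_0\}$ and $D_0:=\{\vphi\sth\Diamond\vphi\in z_0\}$, the relation $R^Sz_0z$ is exactly $B_0\seq z\seq D_0$. For \ref{it:exinc1} I would feed Lemma~\ref{lem:genlin} the pair $\Gamma:=B_0$, $\Delta:=\lbox\setminus(z_1\cap D_0)$, so that the resulting saturated $z\supseteq B_0$ disjoint from $\Delta$ satisfies $B_0\seq z\seq z_1$ and $z\seq D_0$. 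For \ref{it:exinc2} I would instead use $\Gamma:=z_1\cup B_0$ and $\Delta:=\lbox\setminus D_0$, giving a saturated $z\supseteq z_1$ with $B_0\seq z\seq D_0$. Thus everything comes down to proving $\ssbox\nvdash B_0\Ra\lbox\setminus(z_1\cap D_0)$ and $\ssbox\nvdash z_1\cup B_0\Ra\lbox\setminus D_0$ respectively.

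The engine for both is a \emph{boxing} step that converts a putative derivation between finite subsets into a sequent refuted by the saturation of $z_0$. For \ref{it:exinc2}, suppose $\ssbox\vdash\Sigma,\Lambda\Ra\Delta_0$ with finite $\Sigma\seq z_1$, $\Lambda\seq B_0$ and $\Delta_0\cap D_0=\varnothing$. Saturated sets are closed under conjunction of their members (by $(\land\mathrm{r})$ and saturation), so $\sigma^*:=\bigwedge\Sigma\in z_1\seq D_0$; cutting against $\sigma^*\Ra\sigma$ replaces $\Sigma$ by $\sigma^*$. Applying the double-negation rules and then $(\blacksquare\mathrm{l})$ with distinguished formula $\neg\sigma^*$ and side set $\{\neg\delta\sth\delta\in\Delta_0\}$ turns $\Lambda,\sigma^*\Ra\Delta_0$ into $\Box\Lambda,\neg\Box\neg\sigma^*\Ra\{\neg\Box\neg\delta\sth\delta\in\Delta_0\}$. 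Its antecedent lies in $z_0$ (because $\Box\Lambda\seq z_0$ and $\neg\Box\neg\sigma^*\in z_0$, as $\sigma^*\in D_0$), while every succedent formula is outside $z_0$ (because $\delta\notin D_0$); weakening the antecedent up to $z_0$ and invoking the saturation of $z_0$ gives the contradiction. Clause \ref{it:exinc1} is the polarity-dual. Given $\ssbox\vdash\Lambda\Ra\Delta_0$ with $\Lambda\seq B_0$ and $\Delta_0\cap(z_1\cap D_0)=\varnothing$, I split $\Delta_0$ into $\Delta_1$ (members outside $z_1$) and $\Delta_2$ (members outside $D_0$), set $\delta_1^*:=\bigvee\Delta_1$, $\delta_2^*:=\bigvee\Delta_2$, and note $\delta_2^*\notin D_0$ because $D_0$ reflects disjunctions: the sequent $\Diamond(\vphi\vee\psi)\Ra\Diamond\vphi,\Diamond\psi$ is derivable (by double negation and $(\blacksquare\mathrm{l})$) and $z_0$ is saturated. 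Then $(\blacksquare\mathrm{r})$, with $\delta_1^*$ placed in the positive box slot and $\delta_2^*$ routed through double negation into the diamond slot, yields $\Box\Lambda\Ra\Box\delta_1^*,\neg\Box\neg\delta_2^*$. Saturation of $z_0$ forces $\Box\delta_1^*\in z_0$ (the alternative $\neg\Box\neg\delta_2^*\in z_0$ is excluded), hence $\delta_1^*\in B_0\seq z_1$, and $\vee$-primeness of $z_1$ puts some member of $\Delta_1$ into $z_1$, against the definition of $\Delta_1$.

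I expect the genuine difficulty to lie in the interaction between the two-sided inclusion $B_0\seq z\seq D_0$ and the fact that $(\blacksquare\mathrm{l})$ and $(\blacksquare\mathrm{r})$ each expose only one distinguished formula. This is what forces the preliminary collapse of whole blocks of formulas into a single conjunction or disjunction, and it is crucial that each block be sent to the slot matching its polarity --- ``$z_1$-side'' formulas to the positive box position, ``$D_0$-side'' formulas to the diamond position --- since in a nonclassical calculus one cannot transpose formulas across $\Ra$. Legitimising these collapses requires first recording the relevant closure properties of $z_1$, $B_0$ and $D_0$ under $\land$ and $\vee$, all of which reduce to derivable modal distribution sequents together with the saturation of $z_0$ and $z_1$; in particular the hypotheses tacitly secure $B_0\seq D_0$ (via the derivable $\Box\vphi\Ra\Box\psi,\neg\Box\neg\vphi$ and the non-triviality of the saturated $z_1$), without which the interval $[B_0,D_0]$ could be empty and no successor would exist. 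I would isolate these closure facts as short preliminary claims, following the pattern of \cite[Lemma~4.3]{jath96} already used for Lemma~\ref{lem:genlin}, before running the two constructions.
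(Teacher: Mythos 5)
Your proof is correct and follows essentially the same route as the paper's: both clauses are reduced via Lemma~\ref{lem:genlin} to a single non-derivability claim ($\ssbox\nvdash B_0\Ra\lbox\setminus(z_1\cap D_0)$, resp.\ $\ssbox\nvdash B_0\cup z_1\Ra\lbox\setminus D_0$), which is then refuted by boxing a putative finite derivation and contradicting the saturation of $z_0$. The only divergence is bookkeeping: in clause~(i) the paper leaves the formulas destined for the diamond position as a set and feeds them into the set-valued $\neg\Box\Delta$ slot of $(\blacksquare\mathrm{r})$, so it never needs your auxiliary fact that $\Diamond(\vphi\vee\psi)\Ra\Diamond\vphi,\Diamond\psi$ is derivable --- only the single positive $\Box\vphi$ slot forces a disjunctive collapse, exactly as with your $\delta_1^*$ (while your conjunctive collapse of $\Sigma$ in clause~(ii) is genuinely needed, since $(\blacksquare\mathrm{l})$ admits only one $\neg\Box\vphi$ in the antecedent).
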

		\begin{proof}
		    We start with \ref{it:exinc1}. Since obviously
		    \[
		        {\rm Sent}_{\lbox}\setminus({\rm Sent_{\lbox}}\setminus (z_1\cap \{\vphi\sth \Diamond \vphi \in z_0\}))=z_1\cap \{\vphi\sth \Diamond \vphi \in z_0\}))
		    \]
		    one starts by noticing that 
		        \[
		            S_\blacksquare \nvdash  \{\vphi\sth \Box \vphi \in z_0\}\Ra {\rm Sent_{\lbox}}\setminus (z_1\cap \{\vphi\sth \Diamond \vphi \in z_0\}).
		        \]
		        This is because, if 
		        \[
		            S_\blacksquare \vdash  \{\vphi\sth \Box \vphi \in z_0\}\Ra {\rm Sent_{\lbox}}\setminus (z_1\cap \{\vphi\sth \Diamond \vphi \in z_0\}),
		        \]
		        then 
		        \[
		            S_\blacksquare \vdash  \{\vphi\sth \Box \vphi \in z_0\}\Ra \Theta,
		        \]
		        for some finite $\Theta\subseteq {\rm Sent_{\lbox}}\setminus (z_1\cap \{\vphi\sth \Diamond \vphi \in z_0\})$. Since $\{\vphi\sth \Box \vphi \in z_0\}\subseteq z_1$ and $z_1$ is $S_\blacksquare$-saturated, $z_1\cap \Theta\neq \varnothing$. So we can divide up $\Theta$ in such a way that 
		        \[
		          S_\blacksquare \vdash  \{\vphi\sth \Box \vphi \in z_0\}\Ra \bigvee (\Theta\setminus z_1), \Theta\cap z_1. 
		        \]
		        By the $S_\blacksquare$ rules,
		        \[
		        S_\blacksquare \vdash z_0\Ra \Box\bigvee (\Theta\setminus z_1), \Diamond(\Theta\cap z_1).
		        \]
		        Since $z_0$ is $S_\blacksquare$-saturated, either $\Box\bigvee (\Theta\setminus z_1)\in z_0$, or $\Diamond (\Theta\cap z_1)\cap z_0 \neq \varnothing$. If the former, then $\bigvee (\Theta\setminus z_1)\in z_1$, which is impossible. If the latter, then $z_1\cap \{\vphi \sth \Diamond \vphi\in z_0\}\cap \Theta \neq \varnothing$, which is also impossible.
		        
		        Therefore, by Lemma \ref{lem:genlin}, we can construct an $S_\blacksquare$-saturated $z$ such that
		        \[
		            \{\vphi\sth \Box \vphi \in z_0\}\seq z \seq z_1\cap \{\vphi\sth \Diamond \vphi \in z_0\},
		        \]  
		        which yields of course the claim. 
		        
		        The proof of \ref{it:exinc2} is similar to the previous case. Since $z_0$ is $S_\blacksquare$-saturated, 
		        \[
		        S_\blacksquare\nvdash \{\vphi\sth \Box \vphi \in z_0\},z_1\Ra {\rm Sent}_{\lbox}\setminus\{\vphi \sth \Diamond \vphi \in z_0\}.
		        \]
		        Again by Lemma \ref{lem:genlin} there is a $z$ such that
		        \[
		             \{\vphi\sth \Box \vphi \in z_0\}\cup z_1\subseteq z\subseteq \{\vphi\sth \Diamond \vphi\in z_0\}
		        \]
		        as desired.
		\end{proof}
	
		\begin{lem}[Truth Lemma]\label{lem:truth}%C: I don't remember whether this is sufficient. 
			Let $z\in Z^S$ for $S\in\{\kthr,\bthr,\lp,\fde,{\sf KS3}\}$. Then for all $\vphi\in \lbox$: 
			\[
			\text{$\mc{M}^S,z\Vdash_s \vphi $ if and only if $\vphi \in z$.} 
			\]
		\end{lem}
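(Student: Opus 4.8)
The plan is to prove the biconditional by induction on the \emph{positive complexity} of $\vphi$, carrying the induction out simultaneously for a formula and its negation so as to cope with the nonclassical negation. Since $\mc{M}^S,z\Vdash_s\vphi$ only records whether $\intfde{\vphi}{M}{z}$ is \emph{designated}, i.e.~lies in $\{1,\both\}$, the value of $\neg\vphi$ cannot be recovered from that of $\vphi$; hence each inductive step must establish both ``$\mc{M}^S,z\Vdash_s\vphi$ iff $\vphi\in z$'' and ``$\mc{M}^S,z\Vdash_s\neg\vphi$ iff $\neg\vphi\in z$''. The cases to run are the literals $p,\neg p,\top,\bot$, the double negation $\neg\neg\vphi$, the pairs $\vphi\wedge\psi/\neg(\vphi\wedge\psi)$ and $\vphi\vee\psi/\neg(\vphi\vee\psi)$, and the modal pair $\Box\vphi/\neg\Box\vphi$.

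For the base cases I would simply unfold the definition of $V^S$: by construction $V^S_z(p)\in\{1,\both\}$ exactly when $p\in z$, while the negation clause makes $\intfde{\neg p}{M}{z}$ designated exactly when $V^S_z(p)\in\{0,\both\}$, i.e.~when $\neg p\in z$; the clauses for $\top,\bot$ are immediate. The propositional connective cases are routine and follow the pattern of \cite{jath96}: in the relevant lattice the designated set $\{1,\both\}$ is a prime filter that is closed under the semantic operations, so that e.g.~$\vphi\wedge\psi$ is designated iff both conjuncts are and $\neg(\vphi\wedge\psi)$ is designated iff $\neg\vphi$ or $\neg\psi$ is. After applying the induction hypothesis to reduce each side to a membership claim, I would match satisfaction with membership using saturation together with the left/right rules of $S$; for instance $\vphi\wedge\psi\in z$ iff $\vphi\in z$ and $\psi\in z$, using $S_\blacksquare\vdash\vphi\wedge\psi\Ra\vphi$, $S_\blacksquare\vdash\vphi,\psi\Ra\vphi\wedge\psi$, weakening, and saturation. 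The variations between $\kthr,\lp,\sthr,\fde$ affect only which structural negation rules (or the \textsc{sym} sequent) are available, and hence only the literal/negation bookkeeping, not the overall shape of the argument.

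The modal pair is the crux and is where Lemma \ref{lem:exist} enters. For $\Box\vphi$, note that $\intfde{\Box\vphi}{M}{z}={\rm inf}_{\preceq}\{\intfde{\vphi}{M}{z_0}\sth R^Szz_0\}$ is designated iff every $R^S$-successor of $z$ satisfies $\vphi$ (the designated set is a filter, hence closed under the infimum). The direction $\Box\vphi\in z\Rightarrow$ ``all successors satisfy $\vphi$'' is immediate from the definition of $R^S$. For the converse I argue contrapositively: if $\Box\vphi\notin z$ then, using rule $\blacksquare$r and saturation, $S_\blacksquare\nvdash\{\psi\sth\Box\psi\in z\}\Ra\vphi$; Lemma \ref{lem:genlin} then yields a saturated $z_1\supseteq\{\psi\sth\Box\psi\in z\}$ with $\vphi\notin z_1$, and Lemma \ref{lem:exist}(i) produces a saturated $z_0\subseteq z_1$ with $R^Szz_0$, so $\vphi\notin z_0$ and by the induction hypothesis $\mc{M}^S,z_0\nVdash_s\vphi$. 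The case $\neg\Box\vphi$ is dual: the infimum is \emph{false} iff some successor falsifies $\vphi$, so I would use $\Diamond:=\neg\Box\neg$ together with the derivable $\Box\vphi\lra\Box\neg\neg\vphi$ to turn $\neg\Box\vphi\in z$ into $\Diamond\neg\vphi\in z$, and then invoke Lemma \ref{lem:exist}(ii) to obtain an $R^S$-accessible saturated world containing $\neg\vphi$.

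I expect the main obstacle to lie exactly in these modal cases, in two respects. First, one must keep careful track of the two bounds $\{\psi\sth\Box\psi\in z\}\subseteq z_0\subseteq\{\psi\sth\Diamond\psi\in z\}$ built into $R^S$, so that the witnessing world delivered by Lemma \ref{lem:exist} is genuinely $R^S$-accessible and not merely a saturated superset or subset. Second, for the Weak Kleene system $\bthr$ the box is evaluated with ${\rm inf}_{\curlyeqprec}$ rather than ${\rm inf}_{\preceq}$, and the infectiousness of $\neither$ means that $\neg\Box\vphi$ is designated only when some successor falsifies $\vphi$ \emph{and} no successor leaves $\vphi$ undetermined; matching this refined condition against membership requires the $\blacksquare$-rules of $\bthr$ and a correspondingly more delicate application of the existence lemma.
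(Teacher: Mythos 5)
Your proposal matches the paper's own proof essentially step for step: induction on positive complexity with the literal cases read off from $V^S$, routine propositional cases, the left-to-right direction for $\Box\psi$ handled contrapositively via Lemma \ref{lem:genlin} and Lemma \ref{lem:exist}(i), and the $\neg\Box\psi$ case handled via $\Diamond$ and Lemma \ref{lem:exist}(ii) (just remember that \ref{lem:exist}(ii) is stated for a \emph{saturated} $z_1$, so you need one more pass through Lemma \ref{lem:genlin} to saturate $\{\neg\psi\}$ inside the $\Diamond$-set before invoking it). Your closing remark about the $\curlyeqprec$-infimum for $\bthr$ identifies a genuine subtlety that the paper's own sketch passes over in silence, so that extra care is welcome rather than a deviation.
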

		\begin{proof}[Proof Sketch]
			By induction on the positive complexity of $\vphi$. There are two non-trivial cases. The first is when $\vphi$ is of the form $\Box \psi$. The right-to-left direction is obtained by induction hypothesis. In the left-to-right direction, starting with $\Box \psi \notin z$, one finds a $\ssbox$-saturated set $z_1\supseteq \{\vphi\sth \Box \vphi\in z\}$. By the first part of Lemma \ref{lem:exist}, there is an $\ssbox$-saturated $z_0\subseteq z_1$ such that $R^Sz z_0$. Since $\vphi\notin z_0$, by induction hypothesis one obtains that $\mc{M}^S,z_0\nVdash_s \vphi$, as required. 
			
			The second non-trivial case, when $\vphi$ is $\neg \Box\psi$, is symmetric to the previous one and employs the second part of Lemma \ref{lem:exist}. For the right-to-left direction, suppose $\neg \Box\psi\in v$. One then notices that
			\begin{equation}\label{eq:trunop}
		        \ssbox\nvdash \{\vphi\sth \Box \vphi \in v\},\neg\psi \Ra \varnothing
			\end{equation}
			Therefore, by Lemma \ref{lem:genlin}, we can find a saturated $z_0\supseteq \{\vphi\sth \Box \vphi \in v\},\neg\psi$. By Lemma \ref{lem:exist}, there is also a $z_1\supseteq z_0$ with $R^Sz z_1$. So, $\neg \psi\in z_1$. The claim then follows by induction hypothesis. 
		\end{proof}

		We can finally prove the adequacy of our systems. 
		\begin{proof}[Proof of Prop.~\ref{prop:nonclade}]
			The soundness direction is obtained by a straightforward induction on the length of the proof in $\ssbox$. 
			
			For the completeness direction, one assumes that $\ssbox\nvdash \Gamma \Ra\Delta$ and finds, by Lemma \ref{lem:genlin} an $\ssbox$-saturated $z\supseteq \Gamma$ such that $\Delta\cap z =\varnothing$. By the truth Lemma, $\mc{M}^S,z\Vdash_s \gamma$ for all $\gamma \in \Gamma$ and $\mc{M}^S,z\nVdash_s \delta$ for any $\delta\in \Delta$, as required. 
		\end{proof}	

\end{document}